\font\tencmmib=cmmib10 \skewchar\tencmmib '60
\def\lessim{\ \lower4pt\hbox{$
\buildrel{\displaystyle <}\over\sim$}\ }
\def\gessim{\ \lower4pt\hbox{$\buildrel{\displaystyle >}
\over\sim$}\ }
\newcommand{\e}{\mathbb{E}}
\newcommand{\R}{\mathbb{R}}
\newcommand{\p}{\mathbb{P}}
\newcommand{\vsi}{{\boldsymbol{\sigma}}}
\newtheorem{lemma}{\bf Lemma}
\newtheorem{theorem}{\bf Theorem}
\newtheorem{example}{\bf Example}
\newtheorem{proposition}{\bf Proposition}
\theoremstyle{remark}
\newenvironment{Proof of lemma}{\noindent{\bf Proof of Lemma}}{\hfill$\Box$\newline}
\newenvironment{Proof of theorem}{\noindent{\bf Proof of Theorem}}{\hfill{\footnotesize${\square}$}\newline}
\newenvironment{Proof of theorems}{\noindent{\bf Proof of Theorems}}{\hfill$\Box$\newline}
\newenvironment{Proof of proposition}{\noindent{\bf Proof of Proposition}}{\hfill$\Box$\newline}
\newenvironment{Proof of propositions}{\noindent{\bf Proof of Propositions}}{\hfill$\Box$\newline}
\newenvironment{Proof of exercise}{\noindent{\it Proof of Exercise:}}{\hfill$\Box$}
\begin{document}

\title{On properties of Parisi measures}

\author{Antonio Auffinger  \thanks{auffing@math.uchicago.edu} \\ \small{University of Chicago}\and Wei-Kuo Chen \thanks{wkchen@math.uchicago.edu} \\ \small{University of Chicago} }

\maketitle

\begin{abstract} We investigate the structure of Parisi measures, the functional order parameters of mixed $p$-spin models in mean field spin glasses. In the absence of external field, we prove that a Parisi measure satisfies the following properties. First, at all temperatures, the support of any Parisi measure contains the origin. If it contains an open interval, then the measure has a smooth density on this interval. Next, we give a criterion on temperature parameters for which a Parisi measure is neither Replica Symmetric nor One Replica Symmetry Breaking. Finally, we show that in the Sherrington-Kirkpatrick model, slightly above the critical temperature, the largest number in the support of a Parisi measure is a jump discontinuity. An analogue of these results is discussed in the spherical mixed $p$-spin models. As a tool to establish these facts and of independent interest, we study functionals of the associated Parisi PDEs and derive regularity properties of their solutions.
\end{abstract}


\section{Introduction and main results}\label{intro}

The mixed $p$-spin model is one of the most fundamental mean field spin glasses. The study of this model has provided a rich collection of problems and phenomena both in the physical and mathematical sciences. The reader interested in the background, history and methodologies is invited to check the books of Mezard-Parisi-Virasoro \cite{MPV}, Talagrand \cite{Tal11} and the numerous references therein. 

In this paper we are interested in the structure of the functional order parameter of this model in the absence of external field. This order parameter, also known as the Parisi measure, is predicted to fully qualitatively describe the system and has been the main subject of study by several authors both in physics and mathematics \cite{MPV, Tal07}. Although the role of the order parameter has been partially unveiled and significant progress has been made in the recent years, the structure of the Parisi measures remains very mysterious at low temperature.

Let us now describe the mixed $p$-spin model. For $N\geq 1,$ let $\Sigma_N:=\{-1,+1\}^N$ be the Ising spin configuration space. Consider the pure $p$-spin Hamiltonian with $p\geq 2$,
\begin{equation}
  \label{eq:Hamiltonianpspin}
  H_{N,p}(\boldsymbol\sigma) = \frac{1}{N^{(p-1)/2}}
  \sum_{i_1, \dots, i_p=1}^N
  g_{i_1, \dots, i_p} \sigma_{i_1}\dots \sigma_{i_p}
\end{equation}
for $\vsi=(\sigma_1,\ldots,\sigma_N)\in\Sigma_N,$ where the random variables $g_{i_1, \dots, i_p}$ are independent standard Gaussian for all $p\geq 2$ and $(i_1,\ldots,i_p).$ The mixed $p$-spin model is defined on $\Sigma_N$ and its Hamiltonian is given by a linear combination of the pure $p$-spin Hamiltonians,
\begin{equation}\label{eq:bqs}
H_{N}(\vsi) = \sum_{p=2}^{\infty} \beta_p  H_{N,p}(\vsi).
\end{equation}
Here the sequence $\boldsymbol \beta:=(\beta_p)_{p\geq 2}$ is called the temperature parameters and satisfies $\sum_{p=2}^\infty 2^p\beta_p^2<\infty$ that is enough to guarantee the well-definedness of the model. It is easy to compute that the covariance of $H_N$ is given by 
$$
\e H_{N}(\vsi)H_N(\vsi')=N\xi(R(\vsi,\vsi')),
$$
where 
\begin{align*}
R(\vsi,\vsi'):=\frac{1}{N}\sum_{i=1}^N\sigma_i\sigma_i'
\end{align*} 
is the overlap between spin configurations $\vsi$ and $\vsi'$ and 
\begin{align*}
\xi(u):=\sum_{p=2}^\infty\beta_p^2u^p.
\end{align*}
When $\xi(u)=\beta_2^2 u^2$, we recover the famous Sherrigton-Kirkpatrick model \cite{SK75}. The Gibbs measure is defined as 
$$ 
G_N(\vsi) = \frac{\exp H_{N}(\vsi)}{Z_N},\,\,\forall \vsi\in\Sigma_N,
$$
where the normalizing factor $Z_N$ is known as the partition function.
The central goal and most important problem in this model is to understand the large $N$ behavior of these measures at different values of $\boldsymbol \beta$. This is intimately related to the computation of the free energy $N^{-1}\log Z_N$ in the thermodynamic limit and, as a result, has been studied extensively since the ground-breaking work of G. Parisi \cite{Par79, Par80}. 

In the Parisi solution, it was predicted that the thermodynamic limit of the free energy can be computed by a variational formula. More precisely, consider the Parisi functional $\mathcal{P}$ (see \eqref{eq:ParFunctionalFormula}) defined on the space $M_d[0,1]$ of all probability measures on $[0,1]$ consisting of a finite number of atoms. Then the following limit exists almost surely,
\begin{equation}\label{ParisiFormula}
\lim_{N\rightarrow \infty} \frac{1}{N} \log Z_N = \inf_{\mu\in M_d[0,1]} \mathcal P(\mu).
\end{equation}
For the detailed mathematical proof of this result, the readers are referred to $\cite{Pan11:1,Tal06}.$ It is known \cite{Guerra03} that the Parisi functional can be extended continuously to the space $M[0,1]$ of all probability measures on $[0,1]$ with respect to the metric $d(\mu,\mu'):=\int_0^1|\mu([0,u])-\mu'([0,u])|du$. This guarantees that the infinite dimensional variational problem on the left side of \eqref{ParisiFormula} always has a minimizer. Throughout the paper, we will call any such minimizer  a {\it Parisi measure} and denote it by $\mu_P$. It is expected that for any mixed $p$-spin model, the Parisi measure is unique and it gives the limit law of the overlap $R(\vsi^1,\vsi^2)$ under $\e G_N^{\otimes 2}.$ Ultimately, it fully describes the limit of replicas $(\vsi^\ell)_{\ell\geq 1}$ with respect to the measure $\e G_N^{\otimes\infty}.$ Under certain assumptions on the temperature parameters, these have been rigorous verified in recent years, see \cite{Pan12:1} for an overview along this direction, but the general case remains open. 

The main objective of this paper is to establish some qualitative properties about Parisi measures that have been predicted in physics literature. We now summarize these predictions. Denote by $\mbox{supp}\,\mu_P$ the support of $\mu_P$ and by $q_{M}$ the largest number in $\mbox{supp}\,\mu_P.$ We say that a Parisi measure is Replica Symmetric (RS) if it is a Dirac measure; One Replica Symmetric Breaking (1RSB) if it consists of two atoms; Full Replica Symmetric Breaking (FRSB) if it contains a continuous component on some interval contained in its support.
For the Sherrington-Kirkpatrick model $(\xi(u)=\beta_2^2u^2)$ with no external field, if $0<\beta_2<1/\sqrt{2}$, the model is RS: $\mu_P=\delta_0$. (This region of temperature, known as the high temperature region, is different from the familiar one $\beta_2<1$ in the original SK model \cite{SK75}, because our Hamiltonian sums over all $1\leq i_1,i_2\leq N$). In the low temperature regime, $\beta_2>1/\sqrt{2}$, the model exhibits FRSB behavior: $\mu_P=\nu+(1-m)\delta_{q_{M}}.$ Here $\nu$ is a fully supported measure on $[0,q_{M}]$ with $m:=\nu([0,q_{M}])<1$ and possesses a smooth density. For detailed discussion, the readers are referred to Chapter III in \cite{MPV}.

\begin{figure*}[h]
\centering
\scalebox{1.1}{\includegraphics[width=0.70\textwidth]{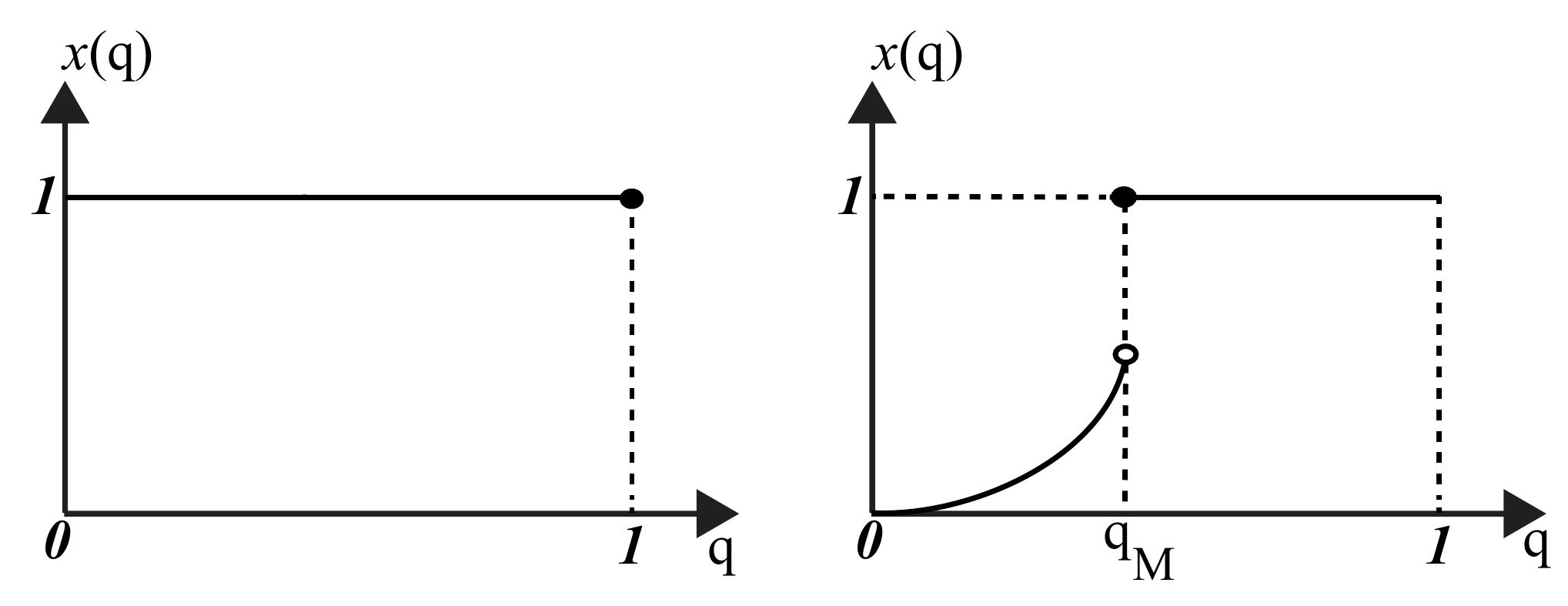}}
\caption{Schematic forms of the order parameter $x(q)=\mu_P([0,q])$ for the Sherrington-Kirkpatrick model at zero magnetic field \cite[Page 41]{MPV}. The left picture is the order parameter in RS phase, while the right is in FRSB phase.}
\label{fig:SK1}
\end{figure*}

In the case of pure $p$-spin model with $p\geq 3$ $(\xi(u)=\beta_p^2u^p),$ it is conjectured in the work of Gardner \cite{Gardner} that the model in the absence of external field goes through two phase-transitions described by two critical temperatures $\beta_{p,c_1}$ and $\beta_{p,c_2}.$ First at high temperature $\beta_p<\beta_{p,c_1}$, the model is RS: $\mu_P=\delta_0.$ In the low temperature region $\beta_{p,c_1} <\beta_p < \beta_{p,c_2}$, the model is 1RSB: $\mu_P=m\delta_0+(1-m)\delta_{q_{M}}$ for $0<m<1$. Last, at very low temperature $\beta_p > \beta_{p,c_2}$, the Parisi measure is FRSB: $\mu_P=m\delta_0+\nu+(1-m')\delta_{q_{M}},$ where $\nu$ is a fully supported measure on $[q,q_{M}]$ for some $q>0$ with $m':=\nu([q,q_{M}])+m$ and has a smooth density.

\begin{figure}[h]
\label{fig:SK2}
\centering
\scalebox{1.4}{\includegraphics[width=0.71\textwidth]{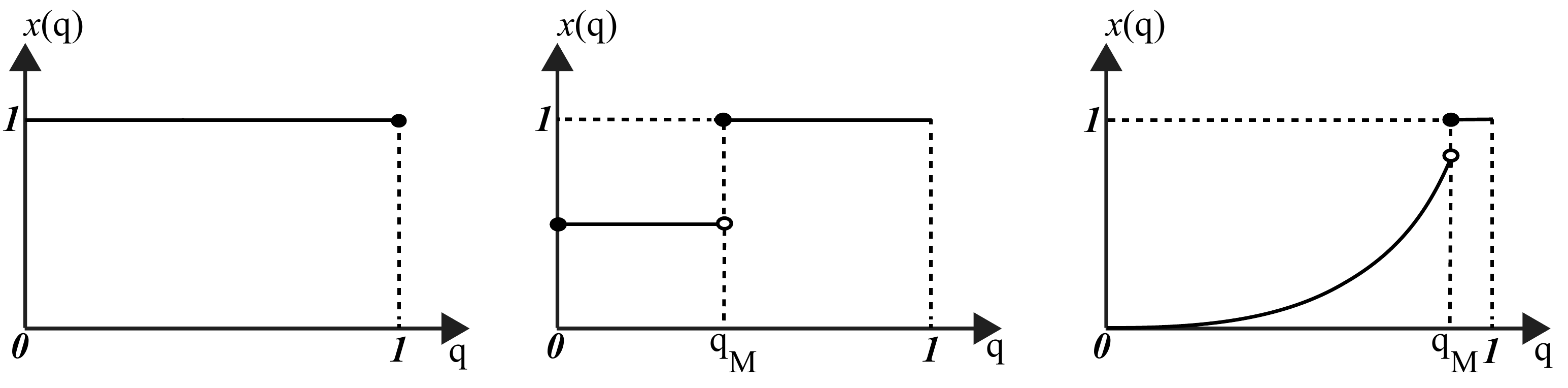}}
\caption{Schematic forms of the order parameter $x(q)=\mu_P([0,q])$ for the pure $p$-spin model with $p\geq 3$ at zero magnetic field \cite{Gardner}. The pictures from left to right are order parameter in RS phase, 1RSB phase and FRSB phase, respectively.}
\end{figure}

To the best of our knowledge, the preceding discussions are by far the most well-known predictions about the structures of Parisi measures studied in physics literature. For mixed models, the behavior may be slightly different as one may expect more phase transitions.  Examples of different structures of Parisi measures were also obtained in the spherical models  \cite{Pan+Tal07, Tal00}. To sum up what we have already discussed up to now, all these models in the absence of external field share three general phenomena: 

\begin{itemize}
\item[$(P1)$]  The origin is contained in the support of the Parisi measure at any temperature.
\item[$(P2)$]  One expects FRSB behavior at low temperature.
\item[$(P3)$]  Any Parisi measure has a jump discontinuity at $q_{M}$ at any temperature.
\end{itemize}

Our main results about these predictions are stated as follows. The first one establishes $(P1)$ and provides a condition on $\xi$ that determines when $0$ is an isolated point of the support.

\begin{theorem}\label{thm:theorem1}
Let $\mu_P$ be a Parisi measure. Then we have that
\begin{itemize}
\item[$(i)$] $0\in\mbox{supp }\mu_P.$
\item[$(ii)$] If $\xi''(0)<1,$ then $\mu_P([0,\hat{q}))=\mu_P(\{0\})$, where $\hat{q}\in [0,1]$ satisfies $\xi''(\hat{q})=1.$
\end{itemize}
\end{theorem}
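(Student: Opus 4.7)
The plan is to use the first-order optimality condition characterizing a Parisi measure. G\^ateaux-differentiating $\mathcal{P}$ along signed perturbations of total mass zero and using the Parisi PDE
\[
\partial_q\Phi_{\mu_P}+\tfrac{\xi''(q)}{2}\bigl(\partial_{xx}\Phi_{\mu_P}+\mu_P([0,q])(\partial_x\Phi_{\mu_P})^2\bigr)=0,\qquad \Phi_{\mu_P}(1,x)=\log\cosh x,
\]
one derives the KKT-type condition: there exists $c\in\mathbb{R}$ such that
\[
g(q):=\tfrac{1}{2}\int_q^1\xi''(s)\bigl(\mathbb{E}[u(s,Y_s)^2]-s\bigr)\,ds\ \ge\ c,\quad q\in[0,1],\qquad g(q)=c\ \text{for}\ q\in\mathrm{supp}\,\mu_P,
\]
where $u=\partial_x\Phi_{\mu_P}$ and $Y_s$ is the associated diffusion started at $Y_0=0$ (the no-external-field choice). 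By It\^o's formula applied to $u(s,Y_s)$, $\mathbb{E}[u(s,Y_s)^2]=\int_0^s\xi''(r)\mathbb{E}[v(r,Y_r)^2]\,dr$ with $v:=\partial_{xx}\Phi_{\mu_P}$.

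For part (ii), I combine this with the a priori bound $0\le v\le 1$, which follows from the terminal condition $v(1,x)=\mathrm{sech}^2(x)\in[0,1]$ via a maximum principle for the Parisi PDE (to be established in the technical sections of the paper). This gives $\mathbb{E}[u(s,Y_s)^2]\le\int_0^s\xi''(r)\,dr=\xi'(s)$. Since $\xi''<1$ on $[0,\hat q)$, integration yields $\xi'(s)<s$ on $(0,\hat q]$. Hence $g'(q)=\tfrac{\xi''(q)}{2}(q-\mathbb{E}[u(q,Y_q)^2])>0$ on $(0,\hat q)$, so $g$ is strictly increasing on $[0,\hat q]$. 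If some $q^{\ast}\in(0,\hat q)\cap\mathrm{supp}\,\mu_P$ existed, then $g(q^{\ast})=c$ would force $g(0)<c$, contradicting $g\ge c$. Therefore $\mathrm{supp}\,\mu_P\cap(0,\hat q)=\emptyset$, which is exactly $\mu_P([0,\hat q))=\mu_P(\{0\})$.

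For part (i), I argue by contradiction: suppose $0\notin\mathrm{supp}\,\mu_P$ and set $a:=\inf\mathrm{supp}\,\mu_P>0$. Since $\mu_P([0,q])\equiv 0$ for $q\in[0,a)$, the Parisi PDE is linear on $[0,a]\times\mathbb{R}$, giving $\Phi_{\mu_P}(q,x)=\mathbb{E}\Phi_{\mu_P}(a,x+Z\sqrt{\xi'(a)-\xi'(q)})$ with $Z$ a standard Gaussian. I would then compare $\mathcal{P}(\mu_P)$ with $\mathcal{P}(\tilde\mu)$, where $\tilde\mu$ is obtained by transporting a positive fraction of $\mu_P$ from $[a,1]$ onto $0$. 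A naive first-order variation yields only $g(0)-g(a)=g(0)-c\ge 0$, so the strict improvement $\mathcal{P}(\tilde\mu)<\mathcal{P}(\mu_P)$ must be extracted differently. The key is to use the evenness of $\Phi_{\mu_P}(q,\cdot)$ in $x$ (preserved by the PDE because the terminal $\log\cosh$ is even and $\partial_x\Phi_{\mu_P}(q,0)=0$ kills the nonlinear term at $x=0$), write $\Phi_{\mu_P}(0,0)=\tfrac{1}{2}\int_0^1\xi''(q)\,\partial_{xx}\Phi_{\mu_P}(q,0)\,dq$ using the PDE at $x=0$, and exploit the strict convexity of $\log\cosh$ together with Jensen applied to the linear-PDE representation on $[0,a]$, producing the strict reduction in $\mathcal{P}$ that contradicts minimality. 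The main obstacle is precisely making this comparison rigorous while properly tracking the nonlinear contributions on $[a,1]$: first-order analysis alone gives a non-strict inequality, so a second-order/strictly convex argument specifically tailored to the absence of external field is essential.
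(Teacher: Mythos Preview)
Your argument for part~$(ii)$ is essentially correct and aligns with the paper's approach: both use the bound $0\le \partial_{xx}\Phi_{\mu_P}\le 1$ to control $\Gamma_{\mu_P}(s):=\mathbb{E}[u(s,Y_s)^2]$, yielding $\Gamma_{\mu_P}(s)\le\xi'(s)<s$ on $(0,\hat q]$. In your language this makes $g$ strictly increasing on $[0,\hat q]$; the paper instead applies the mean value theorem to $\Gamma_{\mu_P}$ between $0$ and a putative support point $q'\in(0,\hat q)$, getting $\Gamma_{\mu_P}'(q'')=1$ while $\Gamma_{\mu_P}'(q'')\le\xi''(q'')<1$. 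The optimality condition is phrased as ``$\Gamma_{\mu_P}(u)=u$ on $\mathrm{supp}\,\mu_P$'' rather than through your $g$, but the content is the same.

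Part~$(i)$, however, has a genuine gap. You correctly recognize that the first-order KKT condition gives only $g(0)\ge c=g(a)$, not a strict inequality, and your proposed remedy --- Jensen and strict convexity of $\log\cosh$ applied to the linear heat representation on $[0,a]$ --- is not carried out and, as you yourself note, would not easily produce a strict reduction in $\mathcal{P}$ because the comparison measure $\tilde\mu$ still inherits the nonlinear evolution on $[a,1]$. The paper avoids this entirely by working with $\Gamma_{\mu_P}$ itself. The missing idea is that on $(0,a)$, where $\mu_P([0,u])\equiv 0$, the second derivative of $\Gamma_{\mu_P}$ simplifies to
\[
\Gamma_{\mu_P}''(u)=\xi'''(u)\,\mathbb{E}\bigl[v(u,Y_u)^2\bigr]+\xi''(u)^2\,\mathbb{E}\bigl[(\partial_x^3\Phi_{\mu_P}(u,Y_u))^2\bigr]>0,
\]
since the term $-\mu_P([0,u])\cdot(\cdots)$ drops out. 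Thus $\Gamma_{\mu_P}$ is strictly convex on $[0,a]$. The paper combines this with a second-order optimality condition $\Gamma_{\mu_P}'(a)\le 1$ (established as part of their characterization of Parisi measures) to get $\Gamma_{\mu_P}'<1$ on $[0,a)$, so $\Gamma_{\mu_P}$ has at most one fixed point on $[0,a]$ --- but both $0$ (by the evenness you noted) and $a$ (by the first-order condition) are fixed points, a contradiction. In fact, your $g$-framework closes just as easily once you have the convexity: since $\Gamma_{\mu_P}(0)=0$ and $\Gamma_{\mu_P}(a)=a$, strict convexity forces $\Gamma_{\mu_P}(s)<s$ on $(0,a)$, whence
\[
g(0)-g(a)=\tfrac12\int_0^a\xi''(s)\bigl(\Gamma_{\mu_P}(s)-s\bigr)\,ds<0,
\]
contradicting $g(0)\ge c=g(a)$. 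The essential ingredient you lack is therefore the computation of $\Gamma_{\mu_P}''$ and the observation that $\mu_P\equiv 0$ on $(0,a)$ kills its only non-positive term.
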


The next two theorems go in the direction of $(P2).$ We start by establishing two results on the regularity of a Parisi measure. The first one shows that a Parisi measure cannot have a jump at point of accumulation from both sides of the support. The second states that if a Parisi measure is not purely atomic then it must have a smooth density.
\begin{theorem}\label{thm:continuity}
\label{thm3} Let $\mu_P$ be a Parisi measure. 

\begin{enumerate}
\item[$(i)$]  Suppose that there exist an increasing sequence $(u_\ell^-)_{\ell\geq 1}$ and a decreasing sequence $(u_\ell^+)_{\ell\geq 1}$ of $\mbox{supp }\mu_P$ such that $\lim_{\ell\rightarrow\infty}u_\ell^-= u_0=\lim_{\ell\rightarrow\infty}u_\ell^+.$ Then $\mu_P$ is continuous at $u_0.$ 
\item[$(ii)$]  If $(a,b)\subset \mbox{supp }\mu_P$ for some $0\leq a<b\leq 1,$ then the distribution function of $\mu_P$ is infinitely differentiable on $[a,b).$
\end{enumerate}
\end{theorem}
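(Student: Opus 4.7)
The plan is to exploit the first-order optimality condition for $\mu_P$ as a minimizer of the Parisi functional $\mathcal{P}$. Write $x(t)=\mu_P([0,t])$, let $\Phi$ denote the associated solution of the Parisi PDE, and let $(X_t)$ be the standard diffusion for which $M_t:=\partial_x\Phi(t,X_t)$ is a martingale with $M_0=0$. Computing the Gateaux derivative of $\mathcal{P}$ along $(1-\epsilon)\mu_P+\epsilon\,\nu$ and rearranging by Fubini, one finds that the function
\[
H(q):=\int_q^1 \xi''(t)\bigl(\e M_t^2-t\bigr)\,dt
\]
attains its minimum on $[0,1]$ exactly on $\mbox{supp}\,\mu_P$. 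Setting $G:=H-\min H$ and $\chi(q):=\e M_q^2-q$, we have $G\ge 0$, $G\equiv 0$ on $\mbox{supp}\,\mu_P$, and $G'=-\xi''\chi$. Applying Ito's formula with $N_q:=\partial_{xx}\Phi(q,X_q)$, and differentiating the Parisi PDE in $x$, gives $\chi'(q)=\xi''(q)\e N_q^2-1$ together with
\[
\chi''(q)=\xi'''(q)\,\e N_q^2+\xi''(q)^2\bigl(-2x(q)\,\e N_q^3+\e[\partial_{xxx}\Phi(q,X_q)]^2\bigr).
\]

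For (ii), the hypothesis $(a,b)\subset\mbox{supp}\,\mu_P$ gives $G\equiv 0$ on $(a,b)$, and successive differentiation produces $\chi\equiv\chi'\equiv\chi''\equiv 0$ there. Solving $\chi''=0$ for $x$ yields
\[
x(q)=\frac{\xi'''(q)\,\e N_q^2+\xi''(q)^2\,\e[\partial_{xxx}\Phi(q,X_q)]^2}{2\,\xi''(q)^2\,\e N_q^3}\qquad(q\in(a,b)),
\]
an identity whose right-hand side is smooth in $q$ once $x$ is continuous on $(a,b)$. A bootstrap then propagates regularity: continuity of $x$ on $(a,b)$ yields continuous coefficients in the Parisi PDE, which improves the joint regularity of $\Phi$ and hence of the moment functionals $q\mapsto\e[\partial_x^k\Phi(q,X_q)]^\ell$, iterating up to $C^\infty$ smoothness of $x$ on $[a,b)$ (the left endpoint being included by right-continuity of distribution functions).

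For (i), closedness of $\mbox{supp}\,\mu_P$ and continuity of $G$ force $G(u_0)=0$; since $G\ge 0$, $u_0$ is a local minimum, so $G'(u_0)=0$ and hence $\chi(u_0)=0$. As $\e M_q^2$ and $\e N_q^2$ are continuous in $q$ (even where $x$ has jumps), $G$ is globally $C^2$, and Taylor expansions of $G$ along the two sequences $u_\ell^{\pm}\to u_0$ inside $\mbox{supp}\,\mu_P$ force $G''(u_0)=0$ and thus $\chi'(u_0)=0$, i.e.\ $\xi''(u_0)\e N_{u_0}^2=1$. The third derivative $G^{(3)}$, however, can have a jump at $u_0$: the expression for $\chi''$ above shows that this jump equals $2\,\xi''(u_0)^3\,\e N_{u_0}^3\,\mu_P(\{u_0\})$. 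A Taylor expansion with remainder along $u_\ell^{\pm}$ forces both one-sided limits $G^{(3)}(u_0^{\pm})$ to vanish, so the jump also vanishes, giving $\mu_P(\{u_0\})=0$ provided $\e N_{u_0}^3\ne 0$.

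The principal obstacle is the degenerate case in (i) when $\e N_{u_0}^3=0$, in which $G^{(3)}$ is continuous at $u_0$ irrespective of an atom and the argument as stated fails. Handling this requires iterating the Taylor/jump analysis to higher orders $G^{(k)}$ with $k\ge 4$ and identifying an index for which the algebraic coefficient multiplying $\mu_P(\{u_0\})$ in the jump cannot simultaneously vanish, exploiting the hierarchy of PDE-derived identities for $\e[\partial_x^j\Phi(q,X_q)]^\ell$. A secondary technical issue, framed by the paper as the separate program ``of independent interest'', is establishing the precise degree of smoothness of $\Phi$ and of these moment functionals needed to justify both the Taylor expansions in (i) and the bootstrap in (ii).
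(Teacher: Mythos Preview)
Your approach is essentially the paper's: both arguments derive the identity $\Gamma_{\mu_P}(u)=u$ on $\mathrm{supp}\,\mu_P$ from first-order optimality, differentiate to get $\Gamma_{\mu_P}'(u_0)=1$, and then compare the two one-sided second derivatives of $\Gamma_{\mu_P}'$ (equivalently, the one-sided limits of your $G^{(3)}$) to isolate the jump $\mu_P(\{u_0\})$ times a nonvanishing coefficient. The bootstrap for (ii) via the explicit formula for $x(q)$ is likewise the paper's argument.

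The ``principal obstacle'' you flag, however, is a phantom. The coefficient you obtain for the jump is (up to harmless positive factors) $\xi''(u_0)^2\,\e N_{u_0}^3$, and \emph{this is always strictly positive}. The paper records the needed fact as a basic estimate on Parisi PDE solutions (their Proposition~2, equation~\eqref{prop4:eq3}):
\[
\frac{C}{\cosh^2 x}\;\le\;\partial_x^2\Phi_\mu(x,u)\;\le\;1
\]
for some $C>0$ depending only on $\xi$. Hence $N_{u_0}=\partial_{xx}\Phi(u_0,X_{u_0})>0$ almost surely, so $\e N_{u_0}^3>0$. Combined with $\xi''(u_0)>0$ (note $u_0>0$, since an \emph{increasing} sequence in $[0,1]$ converges to it), the degenerate case never arises and no iteration to higher $G^{(k)}$ is needed. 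The same strict positivity is what makes the denominator $\e N_q^3$ in your formula for $x(q)$ in part (ii) nonzero.

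So the only real gap in your write-up is the missing lower bound $\partial_{xx}\Phi>0$; once you supply it (it follows from the Cole--Hopf representation and convexity of $\log\cosh$), both (i) and (ii) go through exactly as you outline, and your ``secondary technical issue'' on regularity of $\Phi$ and of the moment functionals $q\mapsto \e[\partial_x^k\Phi(q,X_q)]^\ell$ is precisely the content of the paper's Propositions~1 and~3.
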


Recall that we say a Parisi measure is RS if it is a Diract measure and is 1RSB if it consists of two atoms. In what follows, we show that a Parisi measure has more complicated structure at very low temperature.

\begin{theorem}\label{sec3:thm1}
Suppose that $\xi$ satisfies 
\begin{align}
\label{sec3:thm1:eq1}
\xi(1)&>\max\left(8\log 2,\frac{1}{3}\sqrt{\xi'(1)}2^{\frac{\xi'(1)}{\xi(1)}+5}\right).
\end{align}
Then the Parisi measure $\mu_P$ is neither RS nor 1RSB.
\end{theorem}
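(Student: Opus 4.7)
The plan is to rule out both RS and 1RSB by exhibiting an explicit test measure $\mu^* \in M[0,1]$ whose Parisi value strictly beats all RS and 1RSB candidates under hypothesis (\ref{sec3:thm1:eq1}). By Theorem \ref{thm:theorem1}(i), $0 \in \mbox{supp}\,\mu_P$, so if $\mu_P$ is RS it must equal $\delta_0$, and if it is 1RSB it must take the form $\mu_P = m\delta_0 + (1-m)\delta_q$ for some $m \in (0,1)$ and $q \in (0,1]$. Since $\mu_P$ minimizes $\mathcal{P}$ over $M[0,1]$, exhibiting $\mu^*$ with $\mathcal{P}(\mu^*) < \mathcal{P}(\delta_0)$ and $\mathcal{P}(\mu^*) < \mathcal{P}(m\delta_0 + (1-m)\delta_q)$ for every admissible $(m, q)$ immediately yields the conclusion.

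The first step is to write closed forms for $\mathcal{P}$ on RS and 1RSB measures by solving the Parisi PDE on their piecewise-constant support. For $\mu = \delta_0$, the substitution $\Psi = e^\Phi$ (Cole--Hopf with $m(t) \equiv 1$) linearizes the PDE and yields $\mathcal{P}(\delta_0) = \log 2 + \xi(1)/2$. For $\mu = m\delta_0 + (1-m)\delta_q$, the PDE linearizes via $\Psi = e^\Phi$ on $[q,1]$ and via $\Psi = e^{m\Phi}$ on $[0,q]$, producing
\begin{equation*}
\mathcal{P}(m\delta_0 + (1-m)\delta_q) = \log 2 + \frac{1}{m}\log\e\cosh^m\bigl(z\sqrt{\xi'(q)}\bigr) + \Pi(m, q, \xi),
\end{equation*}
where $z$ is standard Gaussian and $\Pi(m,q,\xi)$ collects the penalty $-\tfrac{1}{2}\int_0^1 t\xi''(t)\mu([0,t])\,dt$ together with explicit polynomial terms in $\xi(1)-\xi(q)$ and $q\xi'(q)$.

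For the test, I would take a three-atom measure $\mu^* = a\delta_0 + b\delta_{q^*} + c\delta_1$ (hence neither RS nor 1RSB) with parameters optimized to minimize the resulting 2RSB value, obtained by iterating the Cole--Hopf trick across three intervals. The constant $8\log 2$ in hypothesis (\ref{sec3:thm1:eq1}) will enter the comparison $\mathcal{P}(\mu^*) < \mathcal{P}(\delta_0)$: the $\e\log\cosh(z\sqrt{\xi(1)})$ contribution from $\mu^*$'s largest atom grows only as $\sqrt{\xi(1)}$ while $\xi(1)/2$ grows linearly, and the $\log 2$ overhead from the Parisi boundary condition requires $\xi(1)$ to exceed a specific multiple of $\log 2$. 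The factor $\sqrt{\xi'(1)}\,2^{\xi'(1)/\xi(1)+5}$ will emerge from a sharp moment bound on $\e\cosh^m(z\sqrt{\xi'(1)})$ needed to dominate the 1RSB value uniformly in $(m, q)$, the balance point occurring near $q = 1$ and $m \sim \xi'(1)/\xi(1)$.

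The principal difficulty is the uniform lower bound on $\mathcal{P}(m\delta_0 + (1-m)\delta_q)$ over the full two-parameter family, especially in the singular limits $m \to 0$ (where $\frac{1}{m}\log\e\cosh^m \to \e\log\cosh$, collapsing 1RSB to an RS-type expression at $q$) and $q \to 0$ (where the penalty vanishes and 1RSB collapses toward $\delta_0$). I would handle these via monotonicity and continuity arguments that reduce to either $q = 1$ or $m$ bounded away from $0$, followed by an explicit one-parameter optimization. The technical heart will be a sharp Gaussian moment estimate for $\e\cosh^m(z\sqrt{\xi'(1)})$ that recovers the precise exponent $2^{\xi'(1)/\xi(1)}$ in (\ref{sec3:thm1:eq1}); this is where I expect the calculation to be most delicate, and where the regularity results from the earlier PDE-analysis sections of the paper will likely be invoked.
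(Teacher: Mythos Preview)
Your strategy---construct a single three-atom test measure $\mu^*$ and show it beats every RS and 1RSB candidate---is logically sound in principle, but it is not how the paper proceeds, and your plan has a genuine gap at exactly the place you flag as the ``principal difficulty.''

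The paper never compares Parisi-functional values directly. Instead it argues by contradiction using two ingredients you do not mention. First, a thermodynamic moment inequality: Gaussian integration by parts on $\e\langle H_N\rangle$ together with the trivial bound $\e\max_\sigma H_N \le \sqrt{2\xi(1)\log 2}$ gives
\[
\int_0^1 \frac{\xi(q)}{\xi(1)}\,d\mu_P(q) \ge 1 - \sqrt{\frac{2\log 2}{\xi(1)}}.
\]
This already kills RS (the left side vanishes for $\delta_0$) and, for a putative 1RSB minimizer $\hat m\delta_0 + (1-\hat m)\delta_{\hat q}$, forces $\hat m \le \sqrt{2\log 2/\xi(1)}$ and $\xi'(\hat q) \ge \xi(1) - \sqrt{2\xi(1)\log 2}$. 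Second, the first-order optimality condition from Theorem~\ref{thm0}: any point $\hat q$ in the support of a Parisi measure satisfies $\Gamma_{\mu_P}'(\hat q) \le 1$. For the 1RSB measure one computes explicitly
\[
\Gamma_{\mu_P}'(\hat q) = \xi''(\hat q)\,\frac{\e\cosh^{\hat m - 4} z_1}{\e\cosh^{\hat m} z_1},\qquad \e z_1^2 = \xi'(\hat q),
\]
and the Gaussian tail estimates $e^{a^2/2}\phi(a) \asymp 1/|a|$ bound this ratio from below. Combining with the constraints on $(\hat m,\hat q)$ yields $\Gamma_{\mu_P}'(\hat q) \ge \tfrac{3}{32}\,\xi(1)/\bigl(\sqrt{\xi'(1)}\,2^{\xi'(1)/\xi(1)}\bigr)$, which exceeds $1$ precisely under hypothesis~(\ref{sec3:thm1:eq1}).

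So the constant $8\log 2$ comes from making the moment inequality nontrivial (so that $\sqrt{\xi(1)}-\sqrt{2\log 2}\ge \sqrt{\xi(1)}/2$), and the factor $\sqrt{\xi'(1)}\,2^{\xi'(1)/\xi(1)+5}$ comes from the ratio $\e\cosh^{\hat m-4}/\e\cosh^{\hat m}$---an object that has no analogue in your value-comparison scheme. Your guess that these constants arise from bounding $\e\cosh^m(z\sqrt{\xi'(1)})$ in a comparison of $\mathcal{P}$-values is off target: the exponent $\hat m - 4$ is the signature of a second-derivative computation ($\partial_x^2\Phi = 1/\cosh^2$), not of the functional itself. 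Without the optimality condition $\Gamma'\le 1$ you have no mechanism to localize the 1RSB parameters, and your proposed ``monotonicity and continuity'' reduction to boundary cases is not substantiated. The missing idea is to exploit that $\mu_P$ is a \emph{minimizer}, hence satisfies variational inequalities, rather than to out-compete it by an explicit competitor.
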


In other words, the criterion \eqref{sec3:thm1:eq1} ensures that the support of a Parisi measure contains at least three points. This is the first result that provides a partial evidence toward the conjecture $(P2).$ Below we list two examples, the pure $p$-spin model and the $(2+p)$-spin model, for which the condition \eqref{sec3:thm1:eq1} can be easily simplified.

\begin{example}[pure $p$-spin model]\rm 
Recall the pure $p$-spin has $\xi(u)=\beta_p^2u^p$. Condition $(\ref{sec3:thm1:eq1})$ on $\xi$ is equivalent to $\beta_p>\max(2\sqrt{2\log 2},  2^{p+5}\sqrt{p}/3).$
\end{example}

\begin{example}[$(2+p)$-spin model]\rm The Hamiltonian of the $(2+p)$-spin model is governed by $\xi(u)=\beta_2^2u^2+\beta_p^2u^{p}$ for $p\geq 3.$ If $\beta_2$ and $\beta_p$ satisfy 
\begin{align}
\label{ex2:eq1}\xi(1)\geq \frac{1}{9}p2^{2p+10}
\end{align} 
then this model is neither RS nor 1RSB. Indeed, if $\beta_2$ and $\beta_p$ satisfy \eqref{ex2:eq1}, it implies $\xi(1)>8\log 2.$ On the other hand, since $\xi'(1)/\xi(1)=(2\beta_2^2+p\beta_p^2)/(\beta_2^2+\beta_p^2)\leq p$, this and \eqref{ex2:eq1} imply
\begin{align*}
\xi(1)\geq \frac{1}{3}\sqrt{p\xi(1)}2^{p+5}\geq \frac{1}{3}\sqrt{\xi'(1)}2^{\frac{\xi'(1)}{\xi(1)}+5}.
\end{align*}
Therefore, \eqref{sec3:thm1:eq1} is satisfied.
\end{example}

As we have mentioned before, in the case of the SK model, the Parisi measure is RS in the high temperature regime $\beta_2<1/\sqrt{2}$. This was proved by Aizenman, Lebowitz and Ruelle in \cite{ALR87}. Later it is also understood by Toninelli in \cite{T02} that a Parisi measure is not RS in the low temperature region $\beta_2>1/\sqrt{2}$. Note that, as we discussed before, as long as $\beta_2$ is above the critical temperature $1/\sqrt{2}$, the SK model is conjectured to be FRSB. In the following, we prove that $(P3)$ holds if the SK temperature $\beta_2$ is slight above the critical temperature $1/\sqrt{2}$ and the total effect of the rest of the mixed $p$-spin interactions with $p\geq 3$ is sufficiently small.

\begin{theorem}\label{thm:top}
Suppose that $\xi$ satisfies $1/\sqrt{2}<\beta_2\leq 3/2\sqrt{2}$ and
\begin{align}
\label{thm:top:eq1}
\frac{\xi^{'''}(1)}{6}+\frac{2}{3}\sqrt{\xi''(1)}\leq 1.
\end{align}
Then the Parisi measure $\mu_P$ has a jump discontinuity at $q_{M}$.
\end{theorem}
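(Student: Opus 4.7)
The plan is to argue by contradiction: suppose $\mu_P(\{q_M\})=0$. Since $q_M\in\mbox{supp}\,\mu_P$, this forces $q_M$ to be a left-accumulation point of $\mbox{supp}\,\mu_P$, so Theorem \ref{thm:continuity}(ii) yields some $a<q_M$ and an interval $(a,q_M)\subset\mbox{supp}\,\mu_P$ on which the distribution function $x(t):=\mu_P([0,t])$ is $C^\infty$. The no-jump assumption also gives $x\equiv 1$ on $[q_M,1]$. (If $q_M=0$ then $\mu_P=\delta_0$ already has a trivial jump at $0$, so we may assume $q_M>0$.)

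The engine is the first-order optimality condition for the Parisi functional. Writing $\Phi_\mu$ for the solution of the Parisi PDE
\begin{equation*}
\partial_t\Phi+\frac{\xi''(t)}{2}\bigl(\partial_{xx}\Phi+x(t)(\partial_x\Phi)^2\bigr)=0,\qquad \Phi(1,x)=\log\ch x,
\end{equation*}
and $(X_t)_{t\in[0,1]}$ for the tilted diffusion along which $\partial_x\Phi_\mu(t,X_t)$ is a martingale, minimality of $\mu_P$ yields that $G(t):=\e(\partial_x\Phi_{\mu_P}(t,X_t))^2-t$ vanishes on $\mbox{supp}\,\mu_P$. In particular $G\equiv 0$ on $(a,q_M]$, so all one-sided $t$-derivatives of $G$ vanish at $q_M^-$.

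Next I would exploit that $x\equiv 1$ on $[q_M,1]$: the Hopf--Cole substitution $U=e^{\Phi}$ linearizes the PDE into a backward heat equation with terminal value $\cosh x$, yielding the closed form $\Phi_\mu(t,x)=\log\ch x+\tfrac12(\xi'(1)-\xi'(t))$ on $[q_M,1]$, so that $\partial_x\Phi_\mu(q_M,x)=\th x$ and $\partial_{xx}\Phi_\mu(q_M,x)=1-\th^2 x$. A short Itô calculation exploiting the martingale property converts $G'(q_M^-)=0$ and $G''(q_M^-)=0$ into the pair of edge identities
\begin{equation*}
\xi''(q_M)\,A=1,\qquad \xi'''(q_M)\,A+\xi''(q_M)^2(4A-6B)=0,
\end{equation*}
with $A:=\e(1-\th^2 X_{q_M})^2$ and $B:=\e(1-\th^2 X_{q_M})^3$; eliminating $A=1/\xi''(q_M)$ gives $6\xi''(q_M)^3B=4\xi''(q_M)^2+\xi'''(q_M)$. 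Since $(1-\th^2 x)^3=[(1-\th^2 x)^2]^{3/2}$ and $y\mapsto y^{3/2}$ is strictly convex on $(0,\infty)$, Jensen's inequality gives $B>A^{3/2}=\xi''(q_M)^{-3/2}$ strictly, because the law of $X_{q_M}$ has positive spread (the diffusion coefficient $\sqrt{\xi''}$ is positive on $(0,q_M]$).

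Substituting back reduces this to the strict inequality $\xi''(q_M)^{3/2}-\tfrac{2}{3}\xi''(q_M)^2<\tfrac{1}{6}\xi'''(q_M)$. Setting $s:=\sqrt{\xi''(q_M)}$, the hypothesis $\beta_2>1/\sqrt2$ gives $s^2\geq\xi''(0)=2\beta_2^2>1$, while monotonicity of $\xi''$ and $\xi'''$ on $[0,1]$ together with \eqref{thm:top:eq1} yield $\sqrt{\xi''(1)}\leq 3/2$ (so $s\leq 3/2$) and $\xi'''(q_M)/6\leq\xi'''(1)/6\leq 1-(2/3)\sqrt{\xi''(1)}\leq 1-(2/3)s$. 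The previous display then rearranges into
\begin{equation*}
(s-1)(s^2+s+1)\bigl(1-\tfrac{2s}{3}\bigr)<0,
\end{equation*}
which is impossible for $s\in(1,3/2]$ since every factor is non-negative there. The main obstacle will be the careful derivation of the two edge identities: one must justify passing $t$-derivatives through the expectation at $t=q_M^-$ and handle the Itô calculus using the regularity of $x$ on $[a,q_M)$ afforded by Theorem \ref{thm:continuity}(ii), together with uniform moment control on the derivatives of $\Phi_\mu$ near the boundary.
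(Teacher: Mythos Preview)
Your overall strategy coincides with the paper's: argue by contradiction, assume $\mu_P$ is continuous at $q_M$, use the first-order optimality to force $\Gamma_{\mu_P}'(q_M)=1$ and the (one-sided) second derivative of $\Gamma_{\mu_P}$ to vanish at $q_M$, insert the closed form $\Phi_{\mu_P}(x,q_M)=\log\cosh x+\tfrac12(\xi'(1)-\xi'(q_M))$, and finish with Jensen. Your edge identities $\xi''(q_M)A=1$ and $\xi'''(q_M)A+\xi''(q_M)^2(4A-6B)=0$ match the paper's $\Gamma_{\mu_P}'(q_M)=1$ and $\gamma_{1,\mu_P}(q_M)-\gamma_{2,\mu_P}(q_M)=0$ exactly (via $1-\tanh^2=1/\cosh^2$), and your factored inequality $(s-1)(s^2+s+1)(1-\tfrac{2s}{3})<0$ is an equivalent rearrangement of the paper's $1\le \xi'''(q_M)/(6\xi''(q_M)^{3/2})+\tfrac23\sqrt{\xi''(q_M)}$.

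There is, however, a genuine gap in how you reach the edge identities. You write that being a left-accumulation point of $\mbox{supp}\,\mu_P$ lets Theorem~\ref{thm:continuity}(ii) produce an interval $(a,q_M)\subset\mbox{supp}\,\mu_P$. That theorem goes the other way: it \emph{assumes} an interval lies in the support and concludes smoothness of the distribution there. A left-accumulation point only guarantees a sequence $u_n\uparrow q_M$ in the support; the support could consist of isolated atoms accumulating at $q_M$, with no interval at all. Consequently your claim that $G\equiv 0$ on $(a,q_M]$ (and hence that all one-sided derivatives vanish at $q_M^-$) is unjustified.

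The paper avoids this by working directly with the sequence. From $\Gamma_{\mu_P}(u_n)=u_n$ and $\Gamma_{\mu_P}(q_M)=q_M$, the mean value theorem and continuity of $\Gamma_{\mu_P}'$ (Proposition~\ref{prop3}) give $\Gamma_{\mu_P}'(v_n)=1$ along some $v_n\uparrow q_M$, hence $\Gamma_{\mu_P}'(q_M)=1$. For the second-order information one does \emph{not} need $\Gamma_{\mu_P}''$ to exist on an interval: Proposition~\ref{prop3} gives explicit formulas \eqref{prop3:eq2}--\eqref{prop3:eq3} for the one-sided limits of the difference quotient of $\Gamma_{\mu_P}'$ at every point, and the assumed continuity of $\mu_P$ at $q_M$ makes both equal to $\gamma_{1,\mu_P}(q_M)-\gamma_{2,\mu_P}(q_M)$. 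The existence of this left limit together with $\Gamma_{\mu_P}'(v_n)=1=\Gamma_{\mu_P}'(q_M)$ forces it to be zero. Once you replace your interval argument by this sequence-plus-one-sided-derivative argument, the rest of your proof goes through unchanged.
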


 The fact that a $q_{M}$ is a jump discontinuity of the Parisi measure was one of the main assumptions in Theorem 15.4.4 in \cite{Tal11} to prove a decomposition of the system in pure states. The theorem above provides the first non-trivial example where this hypothesis is satisfied. 

\begin{example}[the SK model]\rm Consider the SK model $\xi(u)=\beta_2^2u^2.$ A direct computation yields that $\xi''(1)=2\beta_2^2$ and $\xi'''(1)=0$. If $1/\sqrt{2}<\beta_2\leq 3/2\sqrt{2},$ then \eqref{thm:top:eq1} is satisfied and thus $q_{M}$ is a jump discontinuity of the Parisi measure. 
\end{example}

The rest of the paper is organized as follows. In the next section, we introduce the Parisi PDE and investigate its regularity. We then study the behavior of the Parisi functional near Parisi measures in Section \ref{section:parisifun}. The results therein are the main tools used in Section \ref{Section:s} where we prove Theorems 1, 2, 3 and 4. In Section \ref{section:spherical} we discuss analogues of our results in the spherical $p$-spin model. We end the paper with an Appendix that discuss uniform convergence of derivatives of solutions of the Parisi PDE.

\section{The Parisi PDE and its properties} \label{section:pde}

We now define the Parisi functional and PDE. As in the previous section, we denote by $M_d[0,1]$ the collection of all probability measures on $[0,1]$ consisting of a finite number of atoms and by $M[0,1]$ the collection of all probability measure on $[0,1]$. Each $\mu\in M_d[0,1]$ uniquely  corresponds to a triplet $(k,\mathbf{m},\mathbf{q})$ in such a way that $\mu([0,q_p])=m_p$ for $0\leq p\leq k+1$, where $k\geq 0,$ $\mathbf{m}=(m_p)_{0\leq p\leq k+1}$ and $\mathbf{q}=(q_p)_{0\leq p\leq k+2}$ satisfy
\begin{align}
\begin{split}\label{lem1:eq0}
m_0&=0\leq m_1<m_2<\cdots<m_k\leq m_{k+1}=1,\\
q_0&=0\leq q_1<q_2<\cdots<q_{k+1}\leq q_{k+2}=1.
\end{split}
\end{align}

The Parisi functional $\mathcal P$ is introduced as follows. Consider independent centered Gaussian random variables $(z_j)_{0\leq j \leq k+1}$ with variances $\e z_p^2=\xi'(q_{p+1})-\xi'(q_p)$. Starting from 
$$
X_{k+2}=\log \cosh \sum_{p=0}^{k+1} z_p,
$$
we define recursively for $0\leq p\leq k+1$,
\begin{equation}\label{eq:ParisiRecursion}
X_p=\frac{1}{m_p}\log \e_p \exp m_p X_{p+1},
\end{equation}
where $\e_p$ denotes expectation in the random variables $(z_j)_{j\geq p}$. When $m_p=0$, this means $X_p=\e_pX_{p+1}$. The Parisi functional for $\mu$ is defined as
\begin{equation}\label{eq:ParFunctionalFormula}
\mathcal P(\mu) = X_0 - \frac{1}{2}\int_0^1 u\xi''(u)\mu([0,u])du.
 \end{equation}
 
One may alternatively represent this functional by using the Parisi PDE. Let $\Phi_\mu$ be the solution to the following nonlinear antiparabolic PDE,
\begin{align}
\label{eq:ParPDE}
\partial_u \Phi_\mu(x,u) = - \frac{\xi''(u)}{2}\left(\partial_{x}^2\Phi_\mu (x,u) +\mu([0,u]) \ (\partial_x \Phi_\mu(x,u))^2\right),\,\,(x,u)\in\mathbb{R}\times[0,1]
\end{align}
with end condition $\Phi_\mu(x,1)=\log\cosh x.$ Since the distribution function of $\mu$ is a step function, such equation can be explicitly solved by using the Cole-Hopf transformation. Indeed, for $q_{k+1}\leq u\leq 1,$ 
\begin{align}
\label{eq:ParPDE:eq1}
\Phi_\mu(x,u)=\log \cosh x+\frac{1}{2}(\xi'(1)-\xi'(u))
\end{align}
and one can solve the equation decreasingly to get that for $q_p\leq u<q_{p+1}$ with $0\leq p\leq k,$
\begin{align}
\label{eq:ParPDE:eq2}
\Phi_\mu(x,u)=\frac{1}{m_p}\log \e \exp m_p\Phi_\mu(x+z\sqrt{\xi'(q_{p+1})-\xi'(u)},q_{p+1})
\end{align}
where $z$ is a standard Gaussian random variable.
Now $\Phi_\mu$ and $(X_p)_{0\leq p\leq k+2}$ are related through $\Phi_\mu(\sum_{p'=0}^{p-1}z_{p'},q_{p})=X_p$ for $1\leq p\leq k+2$ and $\Phi_\mu(0,0)=X_0.$ It is well-known \cite{Guerra03} that $\mu\mapsto\Phi_\mu$ defines a Lipschitz functional from $(M_d[0,1],d)$ to $(C(\mathbb{R}\times[0,1]),\|\cdot\|_\infty)$. We can then extend this mapping continuously on $(M[0,1],d)$ and will call $\Phi_\mu$ the Parisi PDE solution associated to $\mu$ for any $\mu\in M[0,1]$. Consequently, this induces a continuous extension of the Parisi functional \eqref{eq:ParFunctionalFormula} to the space $(M[0,1],d).$

Let us proceed to state our main results on some basic properties of the Parisi PDE solutions. Let $B=(B(t))_{t\geq 1}$ be a standard Brownain motion and consider the time changed Brownian motion $M(u)=B({\xi'(u)})$ for $0\leq u\leq 1.$ For any $\mu \in M[0,1],$ we define 
\begin{equation}\label{eq:eqU}
W_\mu(u)=\int_0^u(\Phi_\mu(M(u),u)-\Phi_\mu(M(t),t))d\mu(t),\,\,u\in[0,1].
\end{equation} 
The following two propositions will play an essential role throughout the paper. The first one concerns with the regularity and the uniform convergence of the solutions.

\begin{proposition}\label{thm1} Let $\mu\in M[0,1]$. Suppose that $(\mu_n)_{n\geq 1}\subset M[0,1]$ converges to $\mu.$ 
\begin{itemize}
\item[$(i)$] For $j\geq 0,$ $\partial_x^j\Phi_\mu$ exists and is continuous. Uniformly on $\R \times [0,1]$,
\begin{align*}
\lim_{n\rightarrow\infty}\partial_{x}^j\Phi_{\mu_n}=\partial_x^j\Phi_\mu.
\end{align*}
\item[$(ii)$] Let $P$ be a polynomial on $\mathbb{R}^j$ for some $j\geq 1.$ Then 
\begin{align*}
u&\mapsto \e P(\partial_x\Phi_{\mu}(M(u),u),\ldots,\partial_x^j\Phi_{\mu}(M(u),u))\exp W_{\mu}(u),\,\,u\in[0,1].
\end{align*}
is a continuous function and uniformly on $[0,1]$,
\begin{align*}
&\lim_{n\rightarrow\infty}\e P(\partial_x\Phi_{\mu_n}(M(u),u),\ldots,\partial_x^j\Phi_{\mu_n}(M(u),u))\exp W_{\mu_n}(u)\\
&=\e P(\partial_x\Phi_{\mu}(M(u),u),\ldots,\partial_x^j\Phi_{\mu}(M(u),u))\exp W_{\mu}(u).
\end{align*}
\item[$(iii)$] If $\mu$ is continuous on $[0,1]$, $\partial_u\partial_x^j\Phi_\mu$ is continuous for all $j\geq 0.$ 
\end{itemize}
\end{proposition}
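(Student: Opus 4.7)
The plan is to first establish everything for discrete $\mu \in M_d[0,1]$ using the explicit Cole-Hopf representation \eqref{eq:ParPDE:eq2}, obtaining bounds on the $x$-derivatives of $\Phi_\mu$ that are \emph{uniform in the partition data} $(k,\mathbf{m},\mathbf{q})$, and then to pass to arbitrary $\mu \in M[0,1]$ by approximation in the $d$-metric together with Guerra's continuity of $\mu \mapsto \Phi_\mu$. For part (i), fix $\mu \in M_d[0,1]$. From \eqref{eq:ParPDE:eq2}, $\Phi_\mu(\cdot,u)$ is a reverse iteration of Gaussian-convolution-then-log operations applied to the $C^\infty$ endpoint $\log\cosh$, so a reverse induction over the intervals $[q_p,q_{p+1})$ shows $\partial_x^j \Phi_\mu$ exists and is continuous in $(x,u)$ for every $j$. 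The delicate point is bounding $\|\partial_x^j \Phi_\mu\|_\infty$ independently of $(k,\mathbf{m},\mathbf{q})$. For $j=1$, the tilted representation $\partial_x F = \langle G'\rangle_\sim$ for $F = m^{-1}\log\e e^{mG(\cdot+\sigma z)}$ immediately gives $\|\partial_x \Phi_\mu\|_\infty \le \|\tanh\|_\infty = 1$. For $j\ge 2$, naive iteration of the cumulant identity picks up additive terms that grow with $k$, so I would instead differentiate \eqref{eq:ParPDE} in $x$ and apply It\^o's formula to $\partial_x^j \Phi_\mu(M(u),u)$: the Girsanov change of measure induced by $\exp W_\mu$ (shown below to be a martingale) reduces the sup-norm bound on $\partial_x^j \Phi_\mu$ to tilted $L^p$ bounds on polynomials in $\partial_x^i \Phi_\mu$ for $i<j$, and an induction on $j$ yields bounds depending only on $j$ and $\xi$. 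Equipped with these uniform $C^{j+1}$ bounds, the uniform convergence $\Phi_{\mu_n}\to \Phi_\mu$ on $\R\times[0,1]$ for $\mu_n\to \mu$ upgrades via Arzel\`a--Ascoli to uniform convergence of all $x$-derivatives, completing (i) for general $\mu$.

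For (ii), I would first derive the stochastic representation
\begin{align*}
W_\mu(u) = \int_0^u \mu([0,s])\, \partial_x \Phi_\mu(M(s),s)\, dM(s) - \frac{1}{2}\int_0^u \xi''(s)\, \mu([0,s])^2\, (\partial_x \Phi_\mu(M(s),s))^2\, ds
\end{align*}
by applying It\^o's formula to $\Phi_\mu(M(s),s) - \Phi_\mu(M(t),t)$ for $t\le s\le u$, substituting the PDE \eqref{eq:ParPDE} for $\partial_u \Phi_\mu$, and swapping the order of integration against $d\mu(t)$. This identifies $\exp W_\mu$ as the Dol\'eans exponential of a bounded $M$-integrand and hence a true $\p$-martingale with $\sup_u \e \exp(p W_\mu(u)) \le \exp\bigl(\frac{p^2-p}{2}\xi'(1)\bigr)$ for every $p\ge 1$; in particular the family $\{\exp W_{\mu_n}(u)\}_n$ is uniformly integrable. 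Continuity in $u$ of the target expectation then follows from continuity of $u\mapsto M(u)$, the $(x,u)$-regularity from (i), continuity in $u$ of $\Phi_\mu$ and $W_\mu$, and dominated convergence. For uniform convergence as $\mu_n\to \mu$, I would combine the uniform convergence from (i) for the polynomial factor with convergence $\exp W_{\mu_n}\to \exp W_\mu$ in probability (which follows from the stochastic representation together with $\mu_n([0,s])\to \mu([0,s])$ at continuity points of the distribution function of $\mu$, a consequence of $d$-convergence), and upgrade to $L^1(\p)$ convergence using the uniform integrability above.

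For (iii), continuity of $\mu$ makes $u\mapsto \mu([0,u])$ continuous on $[0,1]$. Differentiating \eqref{eq:ParPDE} $j$ times in $x$ yields
\begin{align*}
\partial_u \partial_x^j \Phi_\mu = -\frac{\xi''(u)}{2}\Bigl(\partial_x^{j+2}\Phi_\mu + \mu([0,u])\sum_{i=0}^{j}\binom{j}{i}\partial_x^{i+1}\Phi_\mu\,\partial_x^{j-i+1}\Phi_\mu\Bigr),
\end{align*}
and the right side is jointly continuous in $(x,u)$ by part (i) together with continuity of $\xi''$ and of $\mu([0,\cdot])$. I expect the main obstacle to lie in the partition-independent derivative bounds in (i): a direct iteration of the Cole-Hopf recursion produces constants that blow up with the number of atoms, so it is essential to exploit either the stochastic/Girsanov structure sketched above or an equivalent cumulant identity carefully tracked in $\xi$ rather than in $\mu$, and this is presumably the technical content the paper defers to its appendix on uniform convergence of derivatives.
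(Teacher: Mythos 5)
Your overall strategy matches the paper's: establish a probabilistic representation for $\partial_x^j\Phi_\mu$ whose structure makes the sup-norm bounds depend only on $j$ and $\xi$ (not on the number of atoms), bound exponential moments of the change-of-measure weight, and then pass to general $\mu$ by approximation. The technical execution differs at several points. For the key uniform bounds in (i), the paper does not derive them via It\^o/Girsanov; it cites Talagrand's Lemmas 3.3 and 3.4 to get the closed-form representation $\partial_x^j\Phi_\mu(x,u)=\e F_j(\partial_x V_\mu,\ldots,\partial_x^{j-1}V_\mu,\tanh(\cdot))\exp V_\mu$ with $\e\exp V_\mu=1$, where $V_\mu(x,u)$ is the $(x,u)$-parametrized analogue of $W_\mu$ (equation \eqref{app:eq2}); the polynomials $F_j$ are generated by an explicit recursion \eqref{eq1}, and $|\tanh|\le 1$ together with $\e\exp V_\mu=1$ gives $|\partial_x^j\Phi_\mu|\le C_{0,j}(\xi)$. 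Your Doléans-exponential/Girsanov derivation yields the same object by a different route, and is indeed correct (the stochastic Fubini swap you invoke is legitimate because the integrand is bounded by $1$). For the uniform convergence of derivatives, the paper avoids any compactness argument: it instead proves $L^3$ convergence of $V_{\mu_n}\to V_\mu$ uniformly in $(x,u)$ (Lemmas \ref{lem3}, \ref{lem5}, \ref{lem6}), using tightness of the time-changed Brownian motion and a weak-convergence lemma, and then feeds this through the formula. For (ii), the paper again works through $L^3$ convergence of $W_{\mu_n}$ plus the exponential moment bound of Lemma \ref{app:lem1}, rather than convergence in probability plus uniform integrability, but these are interchangeable. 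Part (iii) is handled identically in both treatments.

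Two technical points in your write-up need repair, though neither is fatal. First, applying It\^o to $\partial_x^j\Phi_\mu(M(u),u)$ only controls the derivative \emph{along the random path} $(M(u),u)$; to obtain the sup-norm bound over all of $\R\times[0,1]$ you must run the argument for the shifted path $s\mapsto x+M(s)-M(u)$ started at an arbitrary $(x,u)$, which is exactly what replaces $W_\mu(u)$ by $V_\mu(x,u)$ in the paper. Second, Arzel\`a--Ascoli as stated requires a compact domain and so does not apply directly on $\R\times[0,1]$; what you actually need is a Landau--Kolmogorov type interpolation, i.e.\ from $\|f_n-f\|_\infty\to 0$ and $\sup_n\|\partial_x^2 f_n\|_\infty<\infty$ deduce $\|\partial_x f_n - \partial_x f\|_\infty\to 0$ via the finite-difference estimate $|f_n'(x)-f'(x)|\le 2h^{-1}\|f_n-f\|_\infty + O(h)$, with $h\sim\|f_n-f\|_\infty^{1/2}$; iterating this gives uniform convergence of all $x$-derivatives on the whole strip. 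With these two substitutions your argument is a correct alternative to the paper's.
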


Since the proof of this proposition requires some tedious and technical computations and estimates, we will defer it to the Appendix. Now, we address the behavior of the first and second partial derivatives of the solution with respect to the $x$ variable as well as a property about $W_\mu$. 

\begin{proposition}\label{prop4} Let $\mu\in M[0,1].$ We have that for all $(x,u)\in\mathbb{R}\times[0,1],$
\begin{align}
\begin{split}\label{prop4:eq1}
&\partial_x\Phi_{\mu}(x,-u)=-\partial_x\Phi_{\mu}(x,u),
\end{split}
\\
\begin{split}\label{prop4:eq2}
&|\partial_x\Phi_{\mu}(x,u)|\leq 1,
\end{split}\\
\begin{split}\label{prop4:eq3}
&\frac{C}{\cosh^2 x}\leq \partial_x^2\Phi_{\mu}(x,u)\leq 1,
\end{split}\\
\begin{split}\label{prop4:eq4}
&\e\exp W_{\mu}(u)=1,
\end{split}
\end{align}
where $C>0$ is a constant depending only on $\xi.$
\end{proposition}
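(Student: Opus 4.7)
My plan is to prove all four statements first for a discrete measure $\mu\in M_d[0,1]$ with parameters $(k,\mathbf{m},\mathbf{q})$ as in (\ref{lem1:eq0}), and then pass to arbitrary $\mu\in M[0,1]$ via the uniform convergence $\partial_x^j\Phi_{\mu_n}\to\partial_x^j\Phi_\mu$ guaranteed by Proposition \ref{thm1}(i). The workhorse in the discrete case is the Cole-Hopf identity (\ref{eq:ParPDE:eq2}), which I would differentiate once and twice in $x$ to obtain, for $q_p\le u<q_{p+1}$, $\sigma_p:=\xi'(q_{p+1})-\xi'(u)$, and a standard Gaussian $z$,
\[
\partial_x\Phi_\mu(x,u)=\bigl\langle\partial_x\Phi_\mu(x+z\sqrt{\sigma_p},q_{p+1})\bigr\rangle_p,
\]
\[
\partial_x^2\Phi_\mu(x,u)=\bigl\langle\partial_x^2\Phi_\mu(x+z\sqrt{\sigma_p},q_{p+1})\bigr\rangle_p+m_p\,\mathrm{Var}_{\langle\cdot\rangle_p}\!\bigl(\partial_x\Phi_\mu(x+z\sqrt{\sigma_p},q_{p+1})\bigr),
\]
where $\langle\cdot\rangle_p$ is the tilted expectation in $z$ with density proportional to $\exp m_p\Phi_\mu(x+z\sqrt{\sigma_p},q_{p+1})$.

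For (\ref{prop4:eq1}) I would read the $-u$ in the statement as a typo for $-x$; then $\Phi_\mu(-x,u)$ solves the same Parisi PDE with end data $\log\cosh(-x)=\log\cosh x$, so uniqueness forces $\Phi_\mu$ to be even in $x$ and hence $\partial_x\Phi_\mu$ odd. For (\ref{prop4:eq2}), the first identity above expresses $\partial_x\Phi_\mu(x,u)$ as a weighted mean of its values at time $q_{p+1}$, so backward induction from the base $|\tanh x|\le 1$ at $u=1$ closes the bound. The upper half of (\ref{prop4:eq3}) I would get by inducting backward on the sharper inequality $(\partial_x\Phi_\mu)^2+\partial_x^2\Phi_\mu\le 1$: combining the two identities, Jensen's inequality $\langle\partial_x\Phi_\mu\rangle_p^2\le\langle(\partial_x\Phi_\mu)^2\rangle_p$, and $m_p\in[0,1]$ give
\[
(\partial_x\Phi_\mu(x,u))^2+\partial_x^2\Phi_\mu(x,u)\le\bigl\langle(\partial_x\Phi_\mu)^2+\partial_x^2\Phi_\mu\bigr\rangle_p,
\]
with base case $\tanh^2 x+\cosh^{-2}x=1$. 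Nonnegativity of $\partial_x^2\Phi_\mu$ is another easy backward induction from the second identity, since the variance term is nonnegative and $\cosh^{-2}y\ge 0$.

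The lower bound in (\ref{prop4:eq3}) is the main obstacle: a naive backward iteration produces constants that degrade geometrically with the depth $k$ and so cannot survive the continuous limit. My approach is probabilistic. Since $\Phi_\mu$ is smooth enough by Proposition \ref{thm1}, the SDE
\[
d\tilde M(s)=\xi''(s)\mu([0,s])\partial_x\Phi_\mu(\tilde M(s),s)\,ds+\sqrt{\xi''(s)}\,dB(s),\qquad \tilde M(u)=x,
\]
is well posed on $[u,1]$ (its drift is Lipschitz in space by the just-proved bound $\partial_x^2\Phi_\mu\le 1$), and a direct It\^o computation using the $x$-derivative of the PDE shows that $\partial_x\Phi_\mu(\tilde M(s),s)$ is a martingale. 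Consequently $\partial_x\Phi_\mu(x,u)=\e\tanh\tilde M^{(x)}(1)$, and differentiating in the starting point yields
\[
\partial_x^2\Phi_\mu(x,u)=\e\!\left[\frac{Y(1)}{\cosh^2\tilde M^{(x)}(1)}\right],\ \ dY(s)=\xi''(s)\mu([0,s])\partial_x^2\Phi_\mu(\tilde M(s),s)Y(s)\,ds,\ \ Y(u)=1.
\]
Since $\partial_x^2\Phi_\mu\ge 0$ we have $Y(1)\ge 1$; and since $|\partial_x\Phi_\mu|\le 1$ and $\mu\le 1$, the increment $\tilde M^{(x)}(1)-x$ is a drift of absolute value at most $\xi'(1)-\xi'(u)$ plus a Gaussian with variance at most $\xi'(1)$. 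The elementary inequality $\cosh(x+y)\le e^{|y|}\cosh x$ then gives $\cosh^{-2}\tilde M^{(x)}(1)\ge e^{-2|N|-C_1(\xi)}/\cosh^2 x$ for a suitable Gaussian $N$, and taking expectations delivers a constant $C=C(\xi)$ independent of $(k,\mathbf{m},\mathbf{q})$.

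Finally, for (\ref{prop4:eq4}) I would apply It\^o's formula to $\Phi_\mu(M(t),t)$: since $d\langle M\rangle_t=\xi''(t)\,dt$, the Parisi PDE cancels the deterministic part and leaves
\[
d\Phi_\mu(M(t),t)=\partial_x\Phi_\mu(M(t),t)\,dM(t)-\tfrac12\xi''(t)\mu([0,t])(\partial_x\Phi_\mu(M(t),t))^2\,dt.
\]
Fubini on the double integral rewrites $W_\mu(u)=\int_0^u\mu([0,s])\,d\Phi_\mu(M(s),s)$, and substituting shows that $\exp W_\mu(u)$ is the Dol\'eans-Dade exponential of the bounded integrand $\mu([0,s])\partial_x\Phi_\mu(M(s),s)$ against $M$; Novikov's criterion then gives a true martingale and $\e\exp W_\mu(u)=1$. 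For discrete $\mu$ one can alternatively verify this by Abel summation together with the fact that $\exp m_p\Phi_\mu(x,u)$ satisfies the backward heat equation on each slab $[q_p,q_{p+1})$ (making $\exp m_p\Phi_\mu(M(s),s)$ a slab-martingale with conditional expectation $1$ across each jump); the extension to arbitrary $\mu\in M[0,1]$ then follows from Proposition \ref{thm1}(ii) with $P\equiv 1$.
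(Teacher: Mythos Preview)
Your proposal is correct and considerably more detailed than the paper's own proof, which simply cites Lemma~14.7.16 and equation~(14.23) of Talagrand's book for the discrete case and then invokes Proposition~\ref{thm1}(i) to pass to general $\mu$. You have essentially reconstructed (and in one place improved upon) what lies behind that citation.

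A few remarks comparing the approaches. Your reading of \eqref{prop4:eq1} as a typo for oddness in $x$ is correct; the applications later in the paper (e.g.\ deducing $\partial_x\Phi_{\mu_P}(0,0)=0$ so that $\Gamma_{\mu_P}(0)=0$) only make sense under that interpretation. Your backward inductions for \eqref{prop4:eq2} and for the upper half of \eqref{prop4:eq3} via the invariant $(\partial_x\Phi_\mu)^2+\partial_x^2\Phi_\mu\le 1$ are the standard route and match Talagrand's. The genuinely different piece is your SDE representation for the lower bound in \eqref{prop4:eq3}: writing $\partial_x^2\Phi_\mu(x,u)=\e\bigl[Y(1)\cosh^{-2}\tilde M^{(x)}(1)\bigr]$ with $Y(1)\ge 1$ and then bounding $|\tilde M^{(x)}(1)-x|$ pathwise by a deterministic drift plus a Gaussian of variance at most $\xi'(1)$ yields a constant $C(\xi)$ in one stroke, whereas a naive slab-by-slab induction risks a constant that degrades with the depth $k$. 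This is a clean way to get uniformity in $\mu$. For \eqref{prop4:eq4}, your It\^o/Dol\'eans--Dade computation is morally right but requires $\partial_u\Phi_\mu$ to exist, which fails at the atoms of a discrete $\mu$; you correctly sidestep this by giving the slab-martingale/Abel-summation argument for $\mu\in M_d[0,1]$ and then extending via Proposition~\ref{thm1}(ii) with $P\equiv 1$, which is exactly the paper's extension mechanism.
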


\begin{proof}
For $\mu\in M_d[0,1]$, the assertions \eqref{prop4:eq1}, \eqref{prop4:eq2} and \eqref{prop4:eq3} are exactly in Lemma 14.7.16 \cite{Tal11}, while \eqref{prop4:eq4} is valid from $(14.23)$ in \cite{Tal11}. For general $\mu\in M[0,1],$ an approximation argument and $(i)$ in Proposition \ref{thm1} conclude the proof.
\end{proof}


\section{The Parisi functional near Parisi measures}\label{section:parisifun}

Our main approach for understanding Parisi measures is to investigate the Parisi functional around these minimizers. To attain this purpose, we define for any $\mu\in M[0,1],$ 
\begin{align}
\label{sec3:gamma}
\Gamma_\mu(u)=\e (\partial_x\Phi_\mu(M(u),u))^2\exp W_\mu(u),\,\,u\in[0,1].
\end{align}
Suppose that $a$ is a continuous function on $[0,1]$ satisfying $0\leq u+a(u)\leq 1$ for $u\in[0,1]$ and $|a(u)-a(u')|\leq |u-u'|$ for $u,u'\in[0,1].$ For $t\in[0,1]$, let $\mu_t$ be the probability measure induced by the mapping $u\mapsto u+ta(u),$ that is, $\mu_t([0,u+ta(u)])=\mu([0,u])$ for $u\in[0,1].$ It is well-known from Lemma 3.7 \cite{Tal07} that a nontrivial application of the Gaussian integration by parts gives
\begin{align}\label{diff:eq1}
\left.\frac{d}{dt}\mathcal{P}(\mu_t)\right|_{t=0}&=\frac{1}{2}\int_0^1\xi''(u)\left(u-\Gamma_\mu(u)\right)a(u)d\mu(u),
\end{align}
where the left side of \eqref{diff:eq1} is the right derivative. Our main results regarding some basic properties of $\Gamma_\mu$ are summarized below.

\begin{proposition}\label{prop3} 
$\Gamma_\mu$ is differentiable and $\Gamma_\mu'$ is continuous with
\begin{align}
\label{prop3:eq1}
\Gamma_\mu'(u)&=\xi''(u)\e (\partial_x^2\Phi_\mu(M(u),u))^2\exp W_\mu(u).
\end{align}
We have
\begin{align}
\label{prop3:eq2}
\lim_{h\rightarrow 0+}\frac{\Gamma_\mu'(u+h)-\Gamma_\mu'(u)}{h}&= \gamma_{1,\mu}(u)-\mu([0,u])\gamma_{2,\mu}(u)
\end{align}
and
\begin{align}
\begin{split}\label{prop3:eq3}
\lim_{h\rightarrow 0-}\frac{\Gamma_\mu'(u+h)-\Gamma_\mu'(u)}{h}&=\gamma_{1,\mu}(u)-\mu([0,u))\gamma_{2,\mu}(u),
\end{split}
\end{align}
where 
\begin{align*}
\gamma_{1,\mu}(u)&=\xi'''(u)\e (\partial_x^2\Phi_\mu(M(u),u))^2\exp W_\mu(u)\\
&+\xi''(u)^2\e (\partial_{x}^3\Phi_\mu(M(u),u))^2\exp W_\mu(u)
\end{align*}
and 
\begin{align*}
\gamma_{2,\mu}(u)&=2\xi''(u)^2\e(\partial_x^2\Phi_\mu(M(u),u))^3\exp W_\mu(u).
\end{align*}
\end{proposition}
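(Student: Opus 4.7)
The plan is to combine It\^o's formula for the time-changed Brownian motion $M(u)=B(\xi'(u))$ (whose quadratic variation is $d\langle M\rangle_u=\xi''(u)\,du$) with the Parisi PDE \eqref{eq:ParPDE}, first under the assumption $\mu\in M_d[0,1]$, where $\Phi_\mu$ is classically smooth on each interval between consecutive atoms of $\mu$, and then to pass to general $\mu\in M[0,1]$ via the uniform convergence supplied by Proposition~\ref{thm1}. The first step is to identify the semimartingale structure of $\exp W_\mu$: rewriting \eqref{eq:eqU} by Fubini gives $dW_\mu(u)=\mu([0,u])\,d\Phi_\mu(M(u),u)$, and after plugging in the drift of $d\Phi_\mu$ from the Parisi PDE and applying It\^o to $\exp W_\mu$, the $du$-drift cancels exactly, yielding
\begin{equation*}
d\exp W_\mu(u)=\mu([0,u])\,\partial_x\Phi_\mu(M(u),u)\exp W_\mu(u)\,dM(u),
\end{equation*}
which is consistent with \eqref{prop4:eq4}.

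To obtain \eqref{prop3:eq1} I would then apply It\^o to $(\partial_x\Phi_\mu(M(u),u))^2$, using the PDE for $\partial_x\Phi_\mu$ obtained by differentiating \eqref{eq:ParPDE} once in $x$, and combine with $\exp W_\mu$ by the product rule. The key algebraic fact to verify is that the drift term $2\mu([0,u])(\partial_x\Phi_\mu)^2\partial_x^2\Phi_\mu\,\xi''(u)\,du$ coming from the Parisi PDE is exactly cancelled by the cross variation $d\langle(\partial_x\Phi_\mu)^2,\exp W_\mu\rangle$, so that only $\xi''(u)(\partial_x^2\Phi_\mu)^2\exp W_\mu\,du$ remains. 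Taking expectations (the martingale part has mean zero by \eqref{prop4:eq4}) gives
\begin{equation*}
\Gamma_\mu(u)=\Gamma_\mu(0)+\int_0^u\xi''(v)\,\e(\partial_x^2\Phi_\mu(M(v),v))^2\exp W_\mu(v)\,dv,
\end{equation*}
and Proposition~\ref{thm1}(ii) makes the integrand continuous in $v$, proving \eqref{prop3:eq1} together with the continuity of $\Gamma_\mu'$.

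For \eqref{prop3:eq2} and \eqref{prop3:eq3} I write $\Gamma_\mu'(u)=\xi''(u)K(u)$ with $K(u)=\e(\partial_x^2\Phi_\mu(M(u),u))^2\exp W_\mu(u)$ and repeat the same It\^o/product-rule procedure on $(\partial_x^2\Phi_\mu(M(u),u))^2\exp W_\mu(u)$. Differentiating \eqref{eq:ParPDE} twice in $x$ and combining the drift of $(\partial_x^2\Phi_\mu)^2$ with that of $\exp W_\mu$ and the cross variation, the $\partial_x^4\Phi_\mu$ terms cancel and the surviving deterministic drift is $[\xi''(u)(\partial_x^3\Phi_\mu)^2-2\xi''(u)\mu([0,u])(\partial_x^2\Phi_\mu)^3]\exp W_\mu\,du$, so
\begin{equation*}
K(w)-K(u)=\int_u^w\bigl[\xi''(v)\e(\partial_x^3\Phi_\mu)^2\exp W_\mu-2\xi''(v)\mu([0,v])\e(\partial_x^2\Phi_\mu)^3\exp W_\mu\bigr]dv.
\end{equation*}
The decomposition $\Gamma_\mu'(u+h)-\Gamma_\mu'(u)=(\xi''(u+h)-\xi''(u))K(u+h)+\xi''(u)(K(u+h)-K(u))$ then reduces the problem to the one-sided limits of $(K(u+h)-K(u))/h$, which are averages of the integrand above over a short interval adjacent to $u$. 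Since the expectation factors are continuous in $v$ by Proposition~\ref{thm1}(ii), while $\lim_{v\downarrow u}\mu([0,v])=\mu([0,u])$ and $\lim_{v\uparrow u}\mu([0,v])=\mu([0,u))$, the $h\downarrow 0$ and $h\uparrow 0$ limits produce \eqref{prop3:eq2} and \eqref{prop3:eq3} respectively. To remove the restriction $\mu\in M_d[0,1]$, I approximate by $\mu_n\in M_d[0,1]$ with $d(\mu_n,\mu)\to 0$: Proposition~\ref{thm1}(ii) provides uniform convergence of every expectation expression, and $\mu_n([0,v])\to\mu([0,v])$ for Lebesgue-a.e.\ $v$, so dominated convergence transfers the integral identities. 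The hard part will be the It\^o bookkeeping for $(\partial_x^2\Phi_\mu)^2\exp W_\mu$---checking that the $\partial_x^4\Phi_\mu$ terms and the doubled $\mu([0,u])$-contribution really do cancel---after which the one-sided dichotomy is a transparent consequence of the jump behaviour of the distribution function of $\mu$.
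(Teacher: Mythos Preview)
Your approach is correct and arrives at the same formulas, but it is organized differently from the paper. The paper first proves a general differentiation lemma (Lemma~\ref{prop2}) for functionals of the form $\e\,L(\partial_x\Phi_\mu,\ldots,\partial_x^k\Phi_\mu)\exp W_\mu$ when $\mu$ has a \emph{continuous density} (Lemma~\ref{prop1} supplies the underlying It\^o computation in that setting), and then approximates an arbitrary $\mu$ by such measures; the one-sided limits \eqref{prop3:eq2}--\eqref{prop3:eq3} are extracted by applying the mean value theorem to $\Gamma_{\mu_n}'$ and squeezing with $\mu_n([0,u_1'])\le\mu_n([0,\cdot])\le\mu_n([0,u_2'])$. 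You instead run the It\^o/product-rule computation directly for $\mu\in M_d[0,1]$ on the intervals between atoms, obtain the integral representation $K(w)-K(u)=\int_u^w[\cdots]\,dv$, and read off the one-sided derivatives from the right/left limits of $\mu([0,v])$; the passage to general $\mu$ is then by approximation from $M_d[0,1]$ combined with Proposition~\ref{thm1}(ii). Your route is more direct for this particular proposition and makes the source of the one-sided dichotomy transparent, while the paper's route packages the It\^o bookkeeping once and for all in Lemma~\ref{prop2}, which it reuses later (for instance in the proof of Theorem~\ref{thm3}). One small point to make explicit in your write-up: the local martingales arising from the $dM$-integrals are genuine martingales because $|\partial_x^j\Phi_\mu|\le C_{0,j}$ by \eqref{lem1:eq2} and $\exp W_\mu$ has bounded moments by Lemma~\ref{app:lem1}, so taking expectations is justified.
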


The proof of this proposition will be postponed to the end of this section. In the case that $\mu$ is a Parisi measure,  the left side of \eqref{diff:eq1} is nonnegative. This fact combined with Proposition \ref{prop3} allows us to derive further properties on the first and second derivatives of $\Gamma_{\mu}$ that are stated in the following theorem.

\begin{theorem}\label{thm0}
Let $\mu_P$ be a Parisi measure. Then $\Gamma_{\mu_P}(u)=u$ and $\Gamma_{\mu_P}'(u)\leq 1$ for all $u\in \mbox{supp }\mu_P.$
\end{theorem}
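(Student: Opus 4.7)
The plan splits the theorem into its two assertions. Both rely on the first-variation identity \eqref{diff:eq1} and the regularity of $\Gamma_\mu$ from Propositions \ref{thm1} and \ref{prop3}.

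\textbf{First: $\Gamma_{\mu_P}(u)=u$ on $\mbox{supp }\mu_P$.} For any $1$-Lipschitz $a:[0,1]\to\mathbb{R}$ for which the rearrangement $u\mapsto u+ta(u)$ is admissible for all sufficiently small $|t|$, applying \eqref{diff:eq1} to both $a$ and $-a$ and using the right-derivative nonnegativity at $t=0$ (from minimality) forces
$$
\int_0^1 \xi''(u)\bigl(u-\Gamma_{\mu_P}(u)\bigr)a(u)\,d\mu_P(u)=0.
$$
A density argument in $a$ gives $u=\Gamma_{\mu_P}(u)$ for $\mu_P$-a.e.\ $u$. Since $u\mapsto\Gamma_{\mu_P}(u)$ is continuous by Proposition \ref{thm1}(ii) (applied to $P(y_1)=y_1^2$), the identity extends to every $u\in\mbox{supp }\mu_P$.

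\textbf{Second: $\Gamma'_{\mu_P}(u)\leq 1$ on $\mbox{supp }\mu_P$.} Fix $u_0\in\mbox{supp }\mu_P$. If $u_0$ is an accumulation point of the support, choose $u_n\in\mbox{supp }\mu_P\setminus\{u_0\}$ with $u_n\to u_0$. Since $\Gamma_{\mu_P}(u_n)-\Gamma_{\mu_P}(u_0)=u_n-u_0$, the mean-value theorem combined with the continuity of $\Gamma'_{\mu_P}$ (Proposition \ref{prop3}) gives $\Gamma'_{\mu_P}(u_0)=1$. Otherwise $u_0$ is isolated in $\mbox{supp }\mu_P$ and $\alpha:=\mu_P(\{u_0\})>0$. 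Observe that $u_0\neq 1$: from $\partial_x\Phi_\mu(x,1)=\tanh x$ and \eqref{prop4:eq4},
$$
1-\Gamma_\mu(1)=\e\bigl(1-\tanh^2 M(1)\bigr)\exp W_\mu(1)>0,
$$
ruling out $1\in\mbox{supp }\mu_P$. Picking $\epsilon>0$ with $(u_0,u_0+\epsilon)\cap\mbox{supp }\mu_P=\emptyset$ and $u_0+\epsilon\leq 1$, consider the rearrangement family
$$
\mu_t:=\mu_P-\alpha\delta_{u_0}+\alpha\delta_{u_0+t},\qquad t\in[0,\epsilon).
$$
Applying \eqref{diff:eq1} at each starting measure $\mu_t$ (with a $1$-Lipschitz bump equal to $1$ near $u_0+t$ and supported in the gap) yields
$$
\frac{d^+}{dt'}\mathcal{P}(\mu_{t'})\bigg|_{t'=t}=\frac{\alpha}{2}\,\xi''(u_0+t)\bigl(u_0+t-\Gamma_{\mu_t}(u_0+t)\bigr),
$$
which is continuous in $t$ by Proposition \ref{thm1}. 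The fundamental theorem of calculus and minimality of $\mu_P$ then force
$$
\int_0^h \xi''(u_0+t)\bigl(u_0+t-\Gamma_{\mu_t}(u_0+t)\bigr)\,dt\geq 0
$$
for all small $h>0$. Since the integrand vanishes at $t=0$ (by the first part) and $\xi''(u_0+t)>0$ for $t>0$ small, a Taylor expansion forces $\tfrac{d}{dt}\bigl[u_0+t-\Gamma_{\mu_t}(u_0+t)\bigr]_{t=0^+}\geq 0$, which after isolating the spatial contribution inside $\frac{d}{dt}\Gamma_{\mu_t}(u_0+t)|_{t=0^+}$ reduces to $\Gamma'_{\mu_P}(u_0)\leq 1$.

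\textbf{Main obstacle.} The delicate point is the last reduction: $\frac{d}{dt}\Gamma_{\mu_t}(u_0+t)|_{t=0^+}$ decomposes into the spatial piece $\Gamma'_{\mu_P}(u_0)$ and a measure-variation piece $\partial_t\Gamma_{\mu_t}(u_0)|_{t=0^+}$, the latter arising because $\Phi_{\mu_t}(x,u_0)$ depends on how $\mu_t$ differs from $\mu_P$ on the short interval $[u_0,u_0+t]$ through the Cole--Hopf recursion \eqref{eq:ParPDE:eq2}. Showing that this extra term has the correct sign---or alternatively constructing a perturbation for which it vanishes identically---is the principal technical hurdle and requires an explicit computation exploiting the formula \eqref{sec3:gamma} for $\Gamma_\mu$ together with the recursive structure of $\Phi_\mu$.
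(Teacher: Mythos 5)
Your first part (the identity $\Gamma_{\mu_P}(u)=u$ on the support) tracks the paper's argument: both use the first-variation identity \eqref{diff:eq1} together with minimality to force the integrand to vanish on $\mbox{supp }\mu_P$, then invoke the continuity of $\Gamma_{\mu_P}$ from Proposition \ref{thm1}. One detail to add: at $u=0$ only one-sided variations $a(0)\geq 0$ are admissible, so the two-sided ``$a$ and $-a$'' step does not apply there; the paper covers this point directly from the oddness \eqref{prop4:eq1}, which gives $\partial_x\Phi_{\mu_P}(0,0)=0$ and hence $\Gamma_{\mu_P}(0)=0$ unconditionally.

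For the second part, the accumulation-point case coincides with the paper's: $\Gamma_{\mu_P}=\mathrm{id}$ on a sequence of support points, plus the mean-value theorem and continuity of $\Gamma'_{\mu_P}$ (Proposition \ref{prop3}), give $\Gamma'_{\mu_P}(u_0)=1$.

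For an isolated $u_0$ your route is genuinely different, and this is where the gap lies. You move the atom discretely, $\mu_t=\mu_P-\alpha\delta_{u_0}+\alpha\delta_{u_0+t}$, write \eqref{diff:eq1} at each base point $\mu_t$, and try to close via the fundamental theorem of calculus. This requires differentiating $t\mapsto\Gamma_{\mu_t}(u_0+t)$ at $t=0^+$, and you correctly isolate the obstruction: this derivative splits into the spatial piece $\Gamma'_{\mu_P}(u_0)$ and a measure-variation piece. It is worth noting that the measure-variation piece is smaller than your description suggests: since $\mu_t=\mu_P$ on $[u_0+t,1]$ and the Parisi PDE is solved backward from $u=1$, one has $\partial_x^j\Phi_{\mu_t}(\cdot,u_0+t)=\partial_x^j\Phi_{\mu_P}(\cdot,u_0+t)$ for all $j$, so the $t$-dependence through the measure enters \eqref{sec3:gamma} only via the weight $\exp W_{\mu_t}(u_0+t)$. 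You would therefore need a sign or smallness statement for $\partial_t W_{\mu_t}(u_0+t)|_{t=0^+}$; none is supplied, so the argument does not close.

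The paper's isolated-point argument avoids your obstacle by never leaving $t=0$: it inserts the $1$-Lipschitz triangular bump $a_\delta(u)=\max(\delta-|u-u_0|,0)$ into \eqref{diff:eq1} evaluated at $\mu_P$ (so only $\Gamma_{\mu_P}$ appears, not $\Gamma_{\mu_t}$), bounds $u-\Gamma_{\mu_P}(u)$ on $[u_0,u_0+\delta]$ by a mean-value estimate anchored at $u_0$, and lets $\delta\downarrow 0$. Observe though that this step is delicate in a way that mirrors your difficulty: because $u_0$ is isolated and $\Gamma_{\mu_P}(u_0)=u_0$, the first-variation integral at $t=0$ is exactly zero, so the inequality one extracts from minimality is at the margin; effectively the content of $\Gamma'_{\mu_P}(u_0)\le 1$ is a second-order stationarity statement, and some care is needed to turn the first-order identity into it. If you want to follow the paper, adopt the $t=0$ triangular-bump route; if you want to pursue your FTC route, the extra term $\partial_t W_{\mu_t}(u_0+t)|_{t=0^+}$ must actually be computed, for instance by differentiating the Cole--Hopf representation \eqref{eq:ParPDE:eq2} on the short interval $[u_0,u_0+t]$ and using \eqref{prop4:eq4}.

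Finally, your aside that $1\notin\mbox{supp }\mu_P$ (via $\Gamma_{\mu_P}(1)<1$) is correct and matches the paper's use of \eqref{prop4:eq2}--\eqref{prop4:eq4} to guarantee $u_0<1$ when $\xi''(u_0)>0$.
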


\begin{proof}
The assertion $\Gamma_{\mu_P}(u)=u$ for $u\in \mbox{supp }\mu_P$ has firstly appeared in  \cite[Proposition 3.2]{Tal07}. It can be simply argued as follows. Observe that since $\mu_P$ minimizes the Parisi functional,  \eqref{diff:eq1} gives 
\begin{align}
\label{thm0:proof:eq2}
\frac{1}{2}\int_0^1\xi''(u)\left(u-\Gamma_{\mu_P}(u)\right)a(u)d\mu_P(u)=\left.\frac{d}{dt}\mathcal{P}({\mu_{P,t}})\right|_{t=0}\geq 0
\end{align}
for arbitrary choice of $a$ satisfying $0\leq u+a(u)\leq 1$ for $u\in[0,1]$ and $|a(u)-a(u')|\leq |u-u'|$ for $u,u'\in[0,1],$ where $\mu_{P,t}$ is induced by the mapping $u\mapsto u+a(u)t$ and $\mu_P.$ This amounts to say that $\Gamma_{\mu_P}(u)=u$ whenever $u\in\mbox{supp }\mu_P$ satisfies $\xi''(u)>0.$ If there is some $u\in\mbox{supp }\mu_P$ such that $\xi''(u)=0,$ then the only possibility is $u=0$ and in this case, since $\partial_{x}\Phi_{\mu_P}$ is odd in $u$ from \eqref{prop4:eq1}, we have $\Gamma_{\mu_P}(0)=0.$ This completes our proof for the first assertion.

Next, let us prove the second statement. Let $u_0\in \mbox{supp }\mu_P.$ If there exists a sequence $\{u_\ell\}_{\ell\geq 1}$ of $\mbox{supp }\mu_P$ such that $\lim_{\ell\rightarrow\infty}u_\ell=u_0,$ then the first assertion, the differentiability of $\Gamma_{\mu_P}$ and the continuity of $\Gamma_{\mu_P}'$ yield $\Gamma_{\mu_P}'(u_0)=1.$ Now assume that $u_0$ is an isolated point. If $\xi''(u_0)=0,$ then we are clearly done by  \eqref{prop3:eq1}. Suppose that $\xi''(u_0)>0.$ Note that  \eqref{prop4:eq2}, \eqref{prop4:eq3} and \eqref{prop4:eq4} applied to $\mu=\mu_P$ imply $u_0<1$. 

Let $\delta\in (0,1-u_0)$ and define $a_\delta(u)=\max(\delta-|u-u_0|,0)$ on $[0,1].$ Then $a_\delta$ is a continuous function that satisfies $0\leq u+a_\delta(u)\leq 1$ for all $u\in[0,1]$ and $|a_\delta(u)-a_\delta(u')|\leq |u-u'|$ for $u,u'\in[0,1]$. Applying the mean value theorem to $u-\Gamma_{\mu_P}(u)$, we obtain
\begin{align}\label{thm0:proof:eq1}
u-\Gamma_{\mu_P}(u)&\leq \max_{u'\in[u_0,u_0+\delta]}(1-\Gamma_{\mu_P}'(u'))\delta,\,\,u\in [u_0,u_0+\delta].
\end{align}
Hence, since $u_0$ is isolated, for $\delta$ be sufficiently small we have
\begin{align*}
\int_0^1\xi''(u)(u-\Gamma_P(u))a_\delta(u)d\mu(u)&=\int_{u_0}^{u_0+\delta}\xi''(u)(u-\Gamma_P(u))a_\delta(u)d\mu(u)\\
&\leq \max_{u'\in[u_0,u_0+\delta]}(1-\Gamma_{\mu_P}'(u'))\delta\int_{u_0}^{u_0+\delta}\xi''(u)a_\delta(u)d\mu(u)\\
&=\max_{u'\in[u_0,u_0+\delta]}(1-\Gamma_{\mu_P}'(u'))\delta^2\xi''(u_0)\mu(\{u_0\}).
\end{align*}
From \eqref{thm0:proof:eq2}, this inequality implies $$
\max_{u'\in[u_0,u_0+\delta]}(1-\Gamma_{\mu_P}'(u'))\geq 0
$$ 
for sufficiently small $\delta.$ Therefore, by continuity of $\Gamma_{\mu_P}'$, we obtain $\Gamma_{\mu_P}'(u_0)\leq 1$ and this completes our proof.
\end{proof}

The rest of the section is devoted to proving Proposition \ref{prop3}. We rely on two lemmas.

 \begin{lemma}
\label{prop1} Suppose that $\mu$ is a probability measure on $[0,1]$ with continuous density $\rho(t), t \in[0,1]$. Consider $f\in C^{2,1}(\mathbb{R}\times[0,1])$ and $g\in C(\mathbb{R}\times [0,1])$ such that on $\mathbb{R}\times[0,1]$
\begin{align}
\begin{split}\label{prop1:eq1}
&\max \{ |f(x,u)|,|\partial_xf(x,u)|,|\partial_uf(x,u)|,|\partial_x^2f(x,u)|\}\leq C\exp |x|,
\end{split}\\
\begin{split}
\label{prop1:eq2}
&0\leq g(x,u)\leq C(1+|x|+u)
\end{split}
\end{align}
for some fixed constant $C>0.$ Define
\begin{align}\label{prop1:eq3}
F(u)&=\e f(M(u),u)\exp D(u)
\end{align}
for $u\in[0,1],$ where $D(u):=-\int_0^u g(M(t),t)\rho(t)dt.$ Then we have that
\begin{align}
\begin{split}\label{prop1:eq4}
F'(u)&=\e \left(\partial_uf(M(u),u)+\frac{\xi''(u)}{2}\partial_x^2f(M(u),u)\right)\exp D(u)\\
&-\rho(u)\e g(M(u),u)f(M(u),u)\exp D(u).
\end{split}
\end{align}
\end{lemma}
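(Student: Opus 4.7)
The identity is exactly what one obtains from It\^o's formula applied to the semimartingale $Y(u) := f(M(u),u)\exp D(u)$, where $M(u) = B(\xi'(u))$ is a continuous martingale with quadratic variation $\langle M\rangle_u = \xi'(u)$, so $d\langle M\rangle_u = \xi''(u)\,du$. Since $D$ is absolutely continuous and non-increasing with $dD(u) = -g(M(u),u)\rho(u)\,du$, it has finite variation and trivial quadratic covariation with $M$. I would first apply It\^o's formula to $f(M(u),u)$ and the chain rule to $\exp D(u)$, and then the product rule for semimartingales (with vanishing cross-variation) to obtain
\begin{align*}
Y(u) - Y(0) &= \int_0^u \Bigl(\partial_u f + \tfrac{\xi''(s)}{2}\partial_x^2 f\Bigr)(M(s),s)\exp D(s)\,ds \\
&\quad - \int_0^u \rho(s)\, g(M(s),s)\, f(M(s),s)\exp D(s)\,ds \\
&\quad + \int_0^u \partial_x f(M(s),s)\exp D(s)\,dM(s).
\end{align*}
Taking expectations, killing the stochastic integral, and differentiating in $u$ would then yield \eqref{prop1:eq4}.

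The main obstacle is the verification of the integrability conditions needed to (a) conclude that the stochastic integral is a genuine (not merely local) martingale with vanishing expectation, and (b) interchange expectation with the time derivative so as to recover the pointwise formula rather than merely the integrated version. Here the hypothesis that $g\ge 0$ is crucial, because it gives the free bound $0 \le \exp D(u) \le 1$ uniformly in $u$ and in sample path. Combined with the growth control \eqref{prop1:eq1}, namely $|f|,|\partial_x f|,|\partial_u f|,|\partial_x^2 f|\le Ce^{|x|}$, and with the fact that $M(u)$ is centered Gaussian with variance $\xi'(u)\le \xi'(1)<\infty$, one gets uniform-in-$u$ bounds of the form $\mathbb{E} e^{p|M(u)|} \le C_p$ for every $p$, hence uniform $L^p$ bounds on each of the integrands appearing above. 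For the stochastic integral it then suffices to bound
\begin{align*}
\mathbb{E}\int_0^1 \bigl(\partial_x f(M(s),s)\bigr)^2 \exp(2 D(s))\, \xi''(s)\,ds \le C^2\, \xi'(1)\,\sup_{s\in[0,1]}\mathbb{E} e^{2|M(s)|} < \infty,
\end{align*}
which makes it a true $L^2$-martingale.

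With the martingale part removed, we arrive at $F(u) = F(0) + \int_0^u H(s)\,ds$, where $H(s)$ is precisely the right-hand side of \eqref{prop1:eq4} evaluated at $s$. To upgrade this to the pointwise derivative identity, I would show that $H$ is continuous on $[0,1]$ and then invoke the fundamental theorem of calculus. Continuity of $H$ follows from dominated convergence: $\rho$ and $\xi''$ are continuous by hypothesis, $M$ has continuous sample paths, $f$ and its partials are jointly continuous with the exponential-growth envelope $Ce^{|x|}$, and $g$ is continuous with the envelope \eqref{prop1:eq2}; the Gaussian tail bounds above furnish uniform integrable dominants on any subinterval, legitimizing the limit under the expectation. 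The continuity of $\rho$ assumed in the statement is used precisely here to guarantee that the second term of $H$ is continuous in $s$.
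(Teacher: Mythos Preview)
Your approach is correct and rests on the same ingredients as the paper's proof (It\^o's formula, the bound $\exp D\le 1$ coming from $g\ge 0$, and Gaussian tail control of $\sup_{s}|M(s)|$), but the organization differs. The paper does \emph{not} apply the It\^o product rule to the full semimartingale $Y(u)=f(M(u),u)\exp D(u)$; instead it computes the right derivative at a fixed $u$ by decomposing the increment as $F(u+h)-F(u)=\e I_1+\e I_2$ with
\[
I_1=\bigl(f(M(u+h),u+h)-f(M(u),u)\bigr)\exp D(u),\qquad
I_2=f(M(u+h),u+h)\bigl(\exp D(u+h)-\exp D(u)\bigr).
\]
For $I_1$ they apply It\^o only to $f(M(\cdot),\cdot)$ on $[u,u+h]$ and use that $\exp D(u)$ is $\mathcal F_u$-measurable to kill the stochastic integral; for $I_2$ they use the mean value theorem on $\exp D$ together with the linear growth of $g$. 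Your route --- integrate on $[0,u]$, kill the martingale, then invoke continuity of the resulting integrand $H$ and the fundamental theorem of calculus --- is a bit more streamlined and makes the role of the $L^2$ martingale condition explicit; the paper's route avoids the separate verification that $H$ is continuous, since it produces $F'(u)$ directly as a limit of difference quotients (but it still needs dominated convergence at that step, using the same $\sup_s|M(s)|$ estimate you would use). Either way the continuity of $\rho$ is used in the same place, to ensure the $g$-term contributes a continuous function of $u$.
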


\begin{proof}
We will only prove that the right derivative of $F$ is equal to \eqref{prop1:eq3}. One may adapt the same argument to prove that the left derivative of $F$ is also equal to $(\ref{prop1:eq3}).$ Suppose that $0\leq u<1.$ Let $0<h<1-u.$ Write
\begin{align}
\label{prop1:proof:eq1}
F(u+h)-F(u)&=\e I_1(u) +\e I_2(u),
\end{align}
where
\begin{align*}
I_1(h)&:=(f(M(u+h),u+h)-f(M(u),u))\exp D(u),\\
I_2(h)&:=f(M(u+h),u+h)(\exp D(u+h)-\exp D(u)).
\end{align*}
It suffices to check that 
\begin{align}
\begin{split}\label{prop1:proof:eq4}
\lim_{h\downarrow 0}\frac{\e I_1(h)}{h}&=\e\left(\partial_tf(M(u),u)+\frac{\xi''(u)}{2}\partial_{x}^2f(M(u),u)\right)\exp D(u),
\end{split}\\
\begin{split}
\label{prop1:proof:eq5}
\lim_{h\downarrow 0}\frac{\e I_2(h)}{h}&=-\rho(u)\e f(M(u),u)g(M(u),u)\exp D(u).
\end{split}
\end{align}

Let us handle $(\ref{prop1:proof:eq4})$ first. Using It\^{o}'s formula, we write
\begin{align*}
f(M(u+h),u+h)-f(M(u),u)&=\int_{u}^{u+h}J(t)dt+\int_u^{u+h}\partial_xf(M(t),t)dM(t),
\end{align*}
where 
\begin{align*}
J(t):=\partial_tf(M(t),t)+\frac{1}{2}\partial_x^2 f(M(t),t)\xi''(t).
\end{align*}
Note that since $D(u)$ is independent of $(M(t)-M(u))_{u\leq t\leq u+h}$, a standard approximation argument using the left Riemann sum for $\int_u^{u+h}\partial_xf(M(t),t)dM(t)$ and  \eqref{prop1:eq1} yield that $
\e\int_u^{u+h}\partial_xf(M(t),t)dM(t)\exp D(u)=0$ and thus 
\begin{align}
\label{prop1:proof:eq3}
\frac{1}{h}\e I_1(h)&=\frac{1}{h}\e\int_u^{u+h}J(t)dt\exp D(u).
\end{align}
Define
\begin{align*}
E_0(h)&=\sup_{0<h'\leq h}\frac{1}{h'}\int_u^{u+h'}\left|J(t)\right|dt\exp D(u).
\end{align*}
Using $D\leq 0$, $(\ref{prop1:eq1})$, \eqref{prop1:eq2} and the fact that 
\begin{align}
\label{prop1:proof:eq8}
\p(\sup_{0\leq h\leq 1}|M(h)|\geq b)\leq 4\p(M(1)\geq b),\,\,b\geq 0,
\end{align}
it follows that 
\begin{align}
\begin{split}\label{prop1:proof:eq2}
\e E_0(h)&\leq C(1+\xi''(1))\e \exp \sup_{0<h'\leq h}|M(u+h')|\\
&\leq C(1+\xi''(1))\e \exp \sup_{0<h'\leq 1}|M(h')|\\
&\leq 4C(1+\xi''(1))\e \exp M(1)\\
&=4C(1+\xi''(1))\exp \frac{\xi'(1)}{2}.
\end{split}
\end{align}
Since \begin{align*}
\lim_{h\downarrow 0}\frac{1}{h}\int_u^{u+h}J(t)dt&=J(u),
\end{align*}
using $(\ref{prop1:proof:eq2})$ and the dominated convergence theorem yield 
\begin{align*}
\lim_{h\downarrow 0}\frac{1}{h}\e \int_u^{u+h}J(t)dt\exp D(u)&=\e J(u)\exp D(u).
\end{align*} 
and this combined with $(\ref{prop1:proof:eq3})$ gives $(\ref{prop1:proof:eq4}).$

\smallskip

Next we compute $(\ref{prop1:proof:eq5}).$ Define
\begin{align*}
E_1(h)&=\max_{0<h'\leq h}|f(M(u+h'),u+h')|,\\
E_2(h)&=\max_{0<h'\leq h}\frac{|\exp D(u+h')-\exp D(u)|}{h'}.
\end{align*}
From \eqref{prop1:eq1} and \eqref{prop1:proof:eq8}, 
\begin{equation}\label{prop1:proof:eq7}
\e E_1(h)^2\leq C^2\e \exp 2\max_{0<u\leq 1}|M(u)|\leq 4C^2\e \exp 2M(1)\leq 4C^2\exp 2\xi'(1).
\end{equation}
Using $D\leq 0$ and \eqref{prop1:eq2} again, the mean value theorem implies
\begin{align*}
E_2(h)&\leq \sup_{0<h'\leq h}\frac{|D(u+h)-D(u)|}{h'}\\
&\leq C\sup_{0<h'\leq h}\frac{\int_{u}^{u+h'}(1+|M(t)|+t)\rho(t)dt}{h'}\\
&\leq 2C\|\rho\|_\infty+C\|\rho\|_\infty\sup_{0\leq u\leq 1}|M(u)|
\end{align*}
and thus, the use of $(a+b)^2\leq 4a^2+4b^2$ for $a,b\in\mathbb{R}$ leads to
\begin{align}
\begin{split}\label{prop1:proof:eq6}
\e E_2(h)^2&\leq  C^2(16\|\rho\|_\infty^2+4\|\rho\|_{\infty}^2 \e \sup_{0\leq u\leq 1}|M(u)|^2)\\
&=C^2(16\|\rho\|_\infty^2+4\|\rho\|_{\infty}^2\xi'(1)).
\end{split}
\end{align}
From the Cauchy-Schwartz inequality, $(\ref{prop1:proof:eq7})$ and $(\ref{prop1:proof:eq6})$, we conclude that $\e E_1(h)E_2(h)<\infty.$ Since $\sup_{0<h'\leq h}|I_2(h')|/h'\leq E_1(h)E_2(h)$ and 
\begin{align*}
\lim_{h\downarrow 0}\frac{I_2(h)}{h}&=-\rho(u) f(M(u),u)g(M(u),u)\exp D(u),
\end{align*}
the dominated convergence theorem implies $(\ref{prop1:proof:eq5})$ and this completes our proof.
\end{proof}

In the next Lemma we use the convention that for any sequence $(a_j)_{j\geq 1}$, $\sum_{j=1}^{0} a_j = 0.$
\begin{lemma}
\label{prop2} Let $\mu\in M[0,1]$ be continuous on $[a,b]$ for some $a,b\in[0,1].$ Suppose that $L$ is  a polynomial on $\mathbb{R}^k$. Define
\begin{align}
\label{prop2:eq1}
F_\mu(u)&=\e L(\partial_x\Phi_\mu(M(u),u),\ldots,\partial_x^k\Phi_\mu(M(u),u))\exp W_\mu(u)
\end{align}
for $u\in[0,1].$ Then for $u\in [a,b],$
\begin{align}
\begin{split}
\label{prop2:eq2}
F_\mu'(u)&=\frac{\xi''(u)}{2}\e\left(\sum_{i,j=1}^k\partial_{y_i}\partial_{y_j}L(\partial_x\Phi_\mu,\ldots,\partial_x^k\Phi_\mu)\partial_x^{i+1}\Phi_\mu\partial_{x}^{j+1}\Phi_\mu\right.\\
&\left.-\mu([0,u])\sum_{i=1}^k\sum_{j=1}^{i-1}{i\choose j}\partial_{y_i}L(\partial_x\Phi_\mu,\ldots,\partial_x^k\Phi_\mu)\partial_x^{j+1}\Phi_\mu\partial_x^{i-j+1}\Phi_\mu\right)\exp W_\mu(u).
\end{split}
\end{align}
\end{lemma}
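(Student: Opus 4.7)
\textbf{Proof plan for Lemma \ref{prop2}.} The strategy is to reduce the computation to Lemma \ref{prop1} by an algebraic rewrite. I would first establish the identity under the additional assumption that $\mu$ has a continuous density $\rho$ on $[0,1]$, and afterwards recover the general case by approximation. Writing $m(u):=\mu([0,u])$, integration by parts in $t$ inside the definition of $W_\mu$ yields
\begin{align*}
W_\mu(u)=m(u)\,\Phi_\mu(M(u),u)-\int_0^u \Phi_\mu(M(t),t)\,\rho(t)\,dt.
\end{align*}
Setting $\tilde f(x,u):=L\bigl(\partial_x\Phi_\mu(x,u),\ldots,\partial_x^k\Phi_\mu(x,u)\bigr)\exp\bigl(m(u)\,\Phi_\mu(x,u)\bigr)$ and $D(u):=-\int_0^u \Phi_\mu(M(t),t)\,\rho(t)\,dt$, one has $F_\mu(u)=\e\tilde f(M(u),u)\exp D(u)$, which matches exactly the format treated by Lemma \ref{prop1} with $g(x,u)=\Phi_\mu(x,u)$.

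Applying Lemma \ref{prop1}, the proof reduces to expanding $\partial_u\tilde f$ and $\partial_x^2\tilde f$ by the product and chain rules and eliminating $\partial_u\Phi_\mu$ via the Parisi PDE
\begin{align*}
\partial_u\Phi_\mu=-\tfrac{\xi''(u)}{2}\bigl(\partial_x^2\Phi_\mu+m(u)(\partial_x\Phi_\mu)^2\bigr).
\end{align*}
Two cancellations are central. The factor $\rho(u)$ produced by differentiating $m(u)$ inside $\partial_u\tilde f$ exactly cancels the term $-\rho(u)\e\Phi_\mu\tilde f\exp D$ from the conclusion of Lemma \ref{prop1}, so $\rho$ disappears from the final answer; and, after inserting the Parisi PDE, the contribution $m(u)f\,\partial_u\Phi_\mu$ coming from $\partial_u\tilde f$ cancels the $\Phi_\mu$-containing pieces produced by $\frac{\xi''(u)}{2}\partial_x^2\tilde f$. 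What survives is
\begin{align*}
F_\mu'(u)=\e\exp W_\mu(u)\Bigl[\partial_u f+\tfrac{\xi''(u)}{2}\partial_x^2 f+\xi''(u)\,m(u)\,\partial_x f\,\partial_x\Phi_\mu\Bigr].
\end{align*}
One then expands $\partial_u f=\sum_{i=1}^k\partial_{y_i}L\cdot\partial_u\partial_x^i\Phi_\mu$ by differentiating the Parisi PDE $i$ times and applying Leibniz's rule $\partial_x^i\bigl((\partial_x\Phi_\mu)^2\bigr)=\sum_{j=0}^i\binom{i}{j}\partial_x^{j+1}\Phi_\mu\,\partial_x^{i-j+1}\Phi_\mu$, and similarly expands $\partial_x^2 f$. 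The $\sum_i\partial_{y_i}L\,\partial_x^{i+2}\Phi_\mu$ contributions cancel between the two sides, the Leibniz boundary terms $j\in\{0,i\}$ (each equal to $\partial_x\Phi_\mu\,\partial_x^{i+1}\Phi_\mu$) absorb the last term $\xi''(u)\,m(u)\,\partial_x f\,\partial_x\Phi_\mu$, and the interior terms $j=1,\ldots,i-1$ assemble into precisely the right-hand side of \eqref{prop2:eq2}.

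To remove the density assumption for a general $\mu\in M[0,1]$ continuous on $[a,b]$, I would approximate $\mu$ by a sequence $\mu_n$ with continuous densities on $[0,1]$ satisfying $d(\mu_n,\mu)\to 0$, for instance by mollification and renormalization. Proposition \ref{thm1}(ii) supplies uniform convergence on $[a,b]$ of both $F_{\mu_n}$ to $F_\mu$ and of the right-hand side of \eqref{prop2:eq2} evaluated at $\mu_n$ to its value at $\mu$; uniform convergence of derivatives together with convergence of values then identifies $F_\mu'(u)$ on $[a,b]$ with the claimed expression. The main technical obstacle is verifying that $\tilde f$ and $g=\Phi_\mu$ satisfy the exponential-growth hypotheses \eqref{prop1:eq1}--\eqref{prop1:eq2} of Lemma \ref{prop1}, uniformly along the approximating sequence; this rests on the linear growth of $\Phi_\mu$ in $x$ and on the boundedness and regularity of the spatial derivatives $\partial_x^j\Phi_\mu$ supplied by Propositions \ref{thm1} and \ref{prop4}.
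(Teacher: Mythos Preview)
Your approach is essentially identical to the paper's: both rewrite $W_\mu$ so that $F_\mu(u)=\e\,\tilde f(M(u),u)\exp D(u)$ with $\tilde f=L(\partial_x\Phi_\mu,\ldots,\partial_x^k\Phi_\mu)\exp\bigl(m(u)\Phi_\mu\bigr)$ and $g=\Phi_\mu$, apply Lemma~\ref{prop1}, eliminate $\partial_u\partial_x^i\Phi_\mu$ via the differentiated Parisi PDE, and then pass from the continuous-density case to general $\mu$ by approximation and Proposition~\ref{thm1}. The only cosmetic difference is that the paper carries out the algebraic cancellations in a single block rather than isolating your intermediate formula; one small point to make explicit in your approximation step is that, because $\mu$ is continuous on $[a,b]$, the sequence $(\mu_n)$ can be chosen so that $\mu_n([0,u])\to\mu([0,u])$ uniformly on $[a,b]$, which is what makes the $\mu([0,u])$-factor on the right-hand side converge.
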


\begin{proof}
To simplify of our notation, we denote $\partial_{y_i}L$ by $L_i$ and $\partial_{y_i}\partial_{y_j}L$ by $L_{ij}.$ Also, we denote $\Phi_\mu$ by $\Phi,$  $\partial_{x}^j\Phi_\mu$ by $\Phi_{x^j}$ and $\partial_{x}^j\partial_u\Phi_\mu$ by $\Phi_{x^ju}$ provided the derivatives exist.
First we prove $(\ref{prop2:eq2})$ in the case that $\mu$ has a continuous density $\rho$ on $[0,1].$ This assumption implies that $\Phi_{x^iu}$ is continuous from $(iii)$ in Proposition \ref{thm1}. Set
\begin{align*}
f(x,u)&=L(\Phi_{x}(x,u),\ldots,\Phi_{x^n}(x,u))\exp S(x,u),\\
g(x,u)&=\Phi(x,u),
\end{align*}
where $S(x,u):=\int_0^u\rho(t)dt\Phi(x,u).$ Using Proposition \ref{thm1}, we compute that
\begin{align*}
\partial_uf&=\left(\sum_{i=1}^nL_i\Phi_{x^iu}+L\rho\Phi+L\int_0^u\rho dt\Phi_u\right)\exp S,\\
\partial_xf&=\left(\sum_{i=1}^nL_i\Phi_{x^{i+1}}+L\int_0^u\rho dt \Phi_x\right)\exp S,\\
\partial_x^2f&=\left(\sum_{i,j=1}^n L_{ij}\Phi_{x^{i+1}}\Phi_{x^{j+1}}+\sum_{i=1}^n L_i\Phi_{x^{i+2}}+L\int_0^u\rho dt\Phi_{x^2}\right)\exp S\\
&+\int_0^u\rho dt\left(2\sum_{i=1}^n L_i\Phi_{x^{i+1}}\Phi_x+L\int_0^u\rho dt(\Phi_x)^2\right)\exp S.
\end{align*}
Recall that $\Phi$ satisfies the Parisi PDE
\begin{align*}
\Phi_u&=-\frac{\xi''}{2}\left(\Phi_{x^2}+\int_0^u\rho dt (\Phi_x)^2\right).
\end{align*}
Taking $i$-th partial derivative with respect to the $x$ variable yields
\begin{align*}
\Phi_{x^iu}&=-\frac{\xi''}{2}\left(\Phi_{x^{i+2}}+\int_0^u\rho dt \sum_{j=0}^i{i\choose j}\Phi_{x^{j+1}}\Phi_{x^{i-j+1}}\right).
\end{align*}
Therefore, we have that
\begin{align*}
&\partial_uf+\frac{\xi''}{2}\partial_{x}^2f-\rho gf\\
&=L\int_0^u\rho dt\left(\Phi_u+\frac{\xi''}{2}\left(\Phi_{x^2}+\int_0^u\rho dt(\Phi_x)^2\right)\right)\exp S+\sum_{i=1}^n L_i\Phi_{x^iu}\exp S\\
&+\frac{\xi''}{2}\left(\sum_{i,j=1}^n L_{ij}\Phi_{x^{i+1}}\Phi_{x^{j+1}}+\sum_{i=1}^nL_i\Phi_{x^{i+2}}+2\int_0^u\rho dt\sum_{i=1}^nL_i\Phi_{x^{i+1}}\Phi_x\right)\exp S\\
&=-\frac{\xi''}{2}\sum_{i=1}^nL_i\left(\Phi_{x^{i+2}}+\int_0^u\rho dt\sum_{j=0}^i{i\choose j}\Phi_{x^{j+1}}\Phi_{x^{i-j+1}}\right)\exp S\\
&+\frac{\xi''}{2}\left(\sum_{i,j=1}^n L_{ij}\Phi_{x^{i+1}}\Phi_{x^{j+1}}+\sum_{i=1}^nL_i\Phi_{x^{i+2}}+2\int_0^u\rho dt\sum_{i=1}^nL_i\Phi_{x^{i+1}}\Phi_x\right)\exp S\\
&=\frac{\xi''}{2}\left(\sum_{i,j=1}^nL_{ij}\Phi_{x^{i+1}}\Phi_{x^{j+1}}-\int_0^u\rho dt\sum_{i=1}^n\sum_{j=1}^{i-1}{i\choose j}L_i\Phi_{x^{j+1}}\Phi_{x^{i-j+1}}\right)\exp S\ .
\end{align*}
Applying Lemma $\ref{prop1}$, our assertion clearly follows in this case that $\mu$ has continuous density on $[a,b].$ Next, we assume that $\mu$ is continuous on $[0,1].$ Pick a sequence of probability measures $(\mu_n)_{n\geq 1}$ on $[0,1]$ with continuous densities that converges to $\mu$ weakly. Using the continuity of $\mu$ on $[a,b]$, we can further assume that $\lim_{n\rightarrow\infty}\sup_{a\leq u\leq b}|\mu_n([0,u])-\mu([0,u])|=0.$ Let $F_{\mu_1},F_{\mu_2},\ldots, F_\mu$ be defined as $(\ref{prop2:eq1})$ by using $\mu_1,\mu_2,\ldots,\mu$, respectively. Using the weak convergence of $(\mu_n)_{n\geq 1}$ and Proposition $\ref{thm1}$, we know that $(F_{\mu_n})_{n\geq 1}$ converges to $F_\mu$ uniformly on $[0,1].$ On the other hand, by our special choice of $(\mu_n)_{n\geq 1}$ and Theorem $\ref{thm1}$, $(F_{\mu_n}')_{n\geq 1}$ converges uniformly on $[a,b].$ These facts imply that on $[a,b],$ $F_\mu$ is differentiable and $F_\mu'$ is given by $(\ref{prop2:eq2}).$ This completes our proof.
\end{proof}

\begin{proof}[Proof of Proposition \ref{prop3}]
Let us pick a sequence of probability measures $(\mu_n)_{n\geq 1}$ with continuous densities that satisfies $\lim_{n\rightarrow\infty}\mu_n([0,u])=\mu([0,u])$ for all $0\leq u\leq 1.$ An application of Lemma \ref{prop2} with $k=1$ and $L(y_1)=y_1^2$ yields that
\begin{align*}
\Gamma_{\mu_n}'(u)&=\xi''(u)\e (\partial_x^2\Phi_{\mu_n}(M(u),u))^2\exp W_{\mu_n}(u).
\end{align*}
Another application of Lemma \ref{prop2} with $k=2$ and $L(y_1,y_2)=y_2^2$ implies
\begin{align*}
\Gamma_{\mu_n}''(u)&=\gamma_{1,\mu_n}(u)-\mu_n([0,u])\gamma_{2,\mu_n}(u),
\end{align*}
where 
\begin{align*}
\gamma_{1,{\mu_n}}(u)&=\xi'''(u)\e (\partial_x^2\Phi_{\mu_n}(M(u),u))^2\exp W_{\mu_n}(u)\\
&+\xi''(u)^2\e (\partial_{x}^3\Phi_{\mu_n}(M(u),u))^2\exp W_{\mu_n}(u)
\end{align*}
and
\begin{align*}
\gamma_{2,{\mu_n}}(u)&=2\xi''(u)^2\e(\partial_x^2\Phi_{\mu_n}(M(u),u))^3\exp W_{\mu_n}(u).
\end{align*}
Since $(\Gamma_{\mu_n}')_{n\geq 1}$ converges uniformly to $\xi''(\cdot)\e(\partial_x^2\Phi_\mu(M(\cdot),\cdot))^2\exp W_\mu(\cdot)$ on $[0,1]$, it implies that $\Gamma_\mu=\lim_{n\rightarrow\infty}\Gamma_{\mu_n}$ is differentiable and its derivative is given by $(\ref{prop3:eq1}).$ Now, let $0\leq u_1<u_2\leq 1.$ Suppose that $u_1'$ and $u_2'$ satisfy $u_1<u_1'<u_2'<u_2.$ From the mean value theorem, we can write
\begin{align}\label{prop3:proof:eq1}
\frac{\Gamma_{\mu_n}'(u_2')-\Gamma_{\mu_n}'(u_1')}{u_2'-u_1'}&=\Gamma_{\mu_n}''(u_0),
\end{align}
for some $u_0\in (u_1',u_2')$. Note that
\begin{itemize}
\item $\mu_n([0,u_1'])\leq \mu_n([0,u])\leq \mu_n([0,u_2'])$ for $u\in [u_1',u_2'].$
\item $\lim_{n\rightarrow\infty}\mu_n([0,u_1'])=\mu([0,u_1'])$ and $\lim_{n\rightarrow\infty}\mu_n([0,u_2'])=\mu([0,u_2']).$
\item $\gamma_{1,\mu}=\lim_{n\rightarrow\infty}\gamma_{1,{\mu_n}}$ and $\gamma_{2,\mu}=\lim_{n\rightarrow\infty}\gamma_{2,{\mu_n}}$ uniformly, by part $(ii)$ of Propostion 1.
\end{itemize}
They together with $(\ref{prop3:proof:eq1})$ imply
\begin{align*}
\frac{\Gamma_\mu'(u_2')-\Gamma_\mu'(u_1')}{u_2'-u_1'}&\leq\max_{u\in [u_1',u_2']}\gamma_{1,\mu}(u)-\mu([0,u_1'])\min_{u\in[u_1',u_2']}\gamma_{2,\mu}(u)
\end{align*}
and
\begin{align*}
\frac{\Gamma_\mu'(u_2')-\Gamma_\mu'(u_1')}{u_2'-u_1'}&\geq\min_{u\in [u_1',u_2']}\gamma_{1,\mu}(u)-\mu([0,u_2'])\max_{u\in [u_1',u_2']}\gamma_{2,\mu}(u).
\end{align*}
Now letting $u_1'\downarrow u_1$ and $u_2'\uparrow u_2$, we obtain 
\begin{align*}
\frac{\Gamma_\mu'(u_2)-\Gamma_\mu'(u_1)}{u_2-u_1}&\leq\max_{u\in [u_1,u_2]}\gamma_{1,\mu}(u)-\mu([0,u_1])\min_{u\in[u_1,u_2]}\gamma_{2,\mu}(u)
\end{align*}
and
\begin{align*}
\frac{\Gamma_\mu'(u_2)-\Gamma_\mu'(u_1)}{u_2-u_1}&\geq\min_{u\in [u_1,u_2]}\gamma_{1,\mu}(u)-\mu([0,u_2))\max_{u\in [u_1,u_2]}\gamma_{2,\mu}(u).
\end{align*}
Since the distribution of $\mu$ is right continuous and $\gamma_{1,\mu},\gamma_{2,\mu}$ are continuous, $(\ref{prop3:eq2})$ follows by applying $u_1=u$ and $u_2=u+h$ with $h\downarrow 0$ to these two inequalities. Also, letting $u_1=u+h$ with $h\uparrow 0$ and $u_2=u$ gives $(\ref{prop3:eq3})$.
\end{proof}


\section{Proofs of Theorems \bf\ref{thm:theorem1}, \ref{thm:continuity}, \ref{sec3:thm1} and \ref{thm:top}}\label{Section:s} 

In this section we will prove our main theorems stated in Section \ref{intro}.

\subsection{Proof of Theorem  \ref{thm:theorem1}}

\begin{proof}[Proof of Theorem \ref{thm:theorem1}]
We start by proving item $(i)$. Suppose that $q_m:=\min\mbox{supp }\mu_P\neq 0.$ Note that $\Gamma_{\mu_P}(q_m)=q_m$ from Theorem \ref{thm0}. Also since $\partial_x\Phi_{\mu_P}(x,u)$ is an odd function in $u$ by \eqref{prop4:eq1}, this implies that $\partial_x\Phi_{\mu_P}(x,0)=0$ and then $\Gamma_{\mu_P}(0)=0.$ Now since $\mu([0,u])=0$ for $0\le u<q_m,$ Proposition $\ref{prop3}$ implies the differentiability of $\Gamma_{\mu_P}'$ on $[0,q_m)$ and moreover with the help of \eqref{prop4:eq3},
\begin{align*}
\Gamma_{\mu_P}''(u)&=\gamma_{1,\mu_P}(u)>0
\end{align*}
for $0<u<q_m.$ This means that from \eqref{prop4:eq3} and \eqref{prop4:eq4}, $\Gamma_{\mu_P}'(u)<\Gamma_{\mu_P}'(q_m)\leq 1$ on $[0,q_m).$ So $\Gamma_{\mu_P}$ can have only one fixed point on $[0,q_m],$ which contradicts $\Gamma_{\mu_P}(0)=0, \Gamma_{\mu_P}(q_m)=q_m.$ This gives $(i).$ 
\smallskip

Next let us turn to the proof of $(ii).$ Suppose that $\mu_P((0,q])>0$ for some $0<q<\hat{q}.$ Let us take $q'\in\mbox{supp }\mu_P\cap (0,q].$ Then  $\Gamma_{\mu_P}(q')={q'}$ from Theorem \ref{thm0}. Note that from the discussion above, we also have $\Gamma_{\mu_P}(0)=0.$ Using the mean value theorem to $\Gamma_{\mu_P}$ and \eqref{prop4:eq3}, we obtain a contradiction,
\begin{align*}
1=\frac{\Gamma_{\mu_P}(q')-\Gamma_{\mu_P}(0)}{q'-0}=\Gamma_{\mu_P}'(q'')\leq \xi''(q'')<\xi''(\hat{q})=1
\end{align*}
for some $q''\in(0,q').$ Hence, $\mu_P((0,q])=0$ for all $0<q<\hat{q}$ and this together with $(i)$ gives $(ii).$
\end{proof}

\subsection{Proof of Theorem \bf\ref{thm:continuity}}

\begin{proof}[Proof of Theorem \ref{thm:continuity}]
We prove $(i)$ first. Since $(u_\ell^+)_{\ell\geq 1},(u_\ell^-)_{\ell\geq 1}\subseteq \mbox{supp }\mu_P,$ we have by Theorem \ref{thm0}, $\Gamma_{\mu_P}(u_\ell^+)=u_\ell^+$ and $\Gamma_{\mu_P}(u_\ell^-)=u_\ell^-$. The mean value theorem and $(i)$ in Proposition \ref{prop3} now ensures the existence of two sequences $(\hat{u}_\ell^+)_{\ell\geq 1}$ and $(\hat{u}_\ell^-)_{\ell\geq 1}$ that satisfy $\hat{u}_{\ell}^+\downarrow u_0$, $\hat{u}_\ell^-\uparrow u_0$ and $\Gamma_{\mu_P}'(\hat{u}_\ell^+)=1=\Gamma_{\mu_P}'(\hat{u}_\ell^-).$ These together with $(\ref{prop3:eq2})$ and $(\ref{prop3:eq3})$ imply that 
\begin{align*}
\gamma_{1,\mu_P}(u_0)-\mu_P([0,u_0))\gamma_{2,\mu_P}(u_0)&=\lim_{h\rightarrow 0+}\frac{\Gamma_{\mu_P}'(u_0+h)-\Gamma_{\mu_P}'(u_0)}{h}=0\\
\gamma_{1,\mu_P}(u_0)-\mu_P([0,u_0])\gamma_{2,\mu_P}(u_0)&=\lim_{h\rightarrow 0-}\frac{\Gamma_{\mu_P}'(u_0+h)-\Gamma_{\mu_P}'(u_0)}{h}=0,
\end{align*} 
where $\gamma_{1,\mu_P}$ and $\gamma_{2,\mu_P}$ are defined as in Proposition $\ref{prop3}$.
Since $\gamma_{2,\mu_P}(u_0)\neq 0$ from \eqref{prop4:eq3}, it follows that $\mu_P([0,u_0])=\mu_P([0,u_0))$ and so $\mu_P$ is continuous at $u_0.$

As for $(ii),$ we denote by $x_{\mu_P}$ the distribution of $\mu_P.$ Note that since $(a,b)\subseteq \mbox{supp }\mu_P$, $(i)$ implies the continuity of $x_{\mu_P}$ on $(a,b)$ and thus the right continuity of $x_{\mu_P}$ further gives the continuity of $x_{\mu_P}$ on $[a,b).$ We claim that $x_{\mu_P}$ is infinitely differentiable on $[a,b)$ by induction. Since $(a,b)\subset\mbox{supp }\mu_P$, Theorem \ref{thm0} and continuity of $\Gamma_{\mu_P}$ yield $\Gamma_{\mu_P}(u)=u$ on $[a,b)$. Therefore, continuity of $\mu_P$  Proposition \ref{prop3} implies $\Gamma_{\mu_P}''(u)=0$ on $[a,b).$ Consequently, it  gives us $x_{\mu_P}(u)\gamma_{2,\mu_P}(u)=\gamma_{1,\mu_P}(u)$ on $[a,b).$ Note again that $\gamma_{2,\mu_P}(u)\neq 0$ on $[0,1].$ We may now write
\begin{align}\label{thm3:proof:eq1}
x_{\mu_P}(u)&=\frac{\zeta(u)F_1(u)+F_2(u)}{F_3(u)} ,
\end{align}
where 
\begin{align*}
\zeta(u)&:=\xi'''(u)/\xi''(u)^2\ ,\\
F_1(u)&:=\e\partial_x^2\Phi_{\mu_P}(M(u),u)^2\exp W_{\mu_P}(u) ,\\
F_2(u)&:=\e (\partial_x^3\Phi_{\mu_P}(M(u),u)^2\exp W_{\mu_P}(u) ,\\
F_3(u)&:=2\e (\partial_x^2\Phi_{\mu_P}(M(u),u))^2\exp W_{\mu_P}(u) .
\end{align*}
Now since $\mu_P$ is continuous on $[a,b),$ Lemma $\ref{prop2}$ implies that $F_1,F_2,F_3$ are differentiable on $[a,b)$. We then conclude that $x_{\mu_P}$ is differentiable on $[a,b).$ Suppose that $x_{\mu_P}^{(n)}$ exists on $[a,b).$ Observe that from $(\ref{thm3:proof:eq1}),$ one can easily derive by differentiating $F_i$ for $j\leq n$ times,
\begin{align*}
F_i^{(j)}(u)&=\e L_{i,j}(\xi'',\ldots,\xi^{(j+2)},\partial_x\Phi_{\mu_P},\ldots,\partial_x^{j+3}\Phi_{\mu_P},x_{\mu_P},x_{\mu_P}',\ldots,x_{\mu_P}^{(j-1)}),
\end{align*}
where $L_{i,j}$'s are polynomials of $3j+4$ variables. Applying $(\ref{prop2:eq2})$ again and using the induction hypothesis that $x_{\mu_P}^{(n)}$ exists, it follows that $F_i^{(n+1)}$ exists and the quotient rule completes our proof.
\end{proof}

\subsection{Proof of Theorem  \ref{sec3:thm1}}

We will prove Theorem \ref{sec3:thm1} by contradiction. Before we turn to the main proof, let us make a few observations on Parisi measures. Denote by $Z_{N,t}$ the partition function associated to the Hamiltonian $tH_N$ for $t\geq 0$, that is, 
$$
Z_{N,t}=\sum_{\vsi}\exp tH_N(\vsi).
$$ 
Denote by $\left<\cdot\right>$ the expectation with respect to the Gibbs measure $G_N$ corresponding to the Hamiltonian $H_N$ in Section \ref{intro}. A direct differentiation yields
\begin{align}\label{sec3:eq1}
\left.\frac{d}{dt}\frac{1}{N}\e\log Z_{N,t}\right|_{t=1}&=\frac{1}{N}\e\left<H_N(\vsi)\right>\leq \frac{1}{N}\e \max_{\vsi}H_N(\vsi)\leq \sqrt{2\xi(1)\log 2}.
\end{align}
Here the last inequality in $(\ref{sec3:eq1})$ relies on a standard Gaussian inequality that $\e \max_{i\leq M}g_i\leq \tau\sqrt{2\log M}$ for arbitrary centered Gaussian process $(g_i)_{i\leq M}$ with $\e g_i^2\leq \tau^2$ for $i\leq M.$ Now, the Gaussian integration by parts applied to $\e\left<H_N(\vsi)\right>$ implies that 
\begin{align*}
\frac{1}{N}\e\left<H_N(\vsi)\right>&=\e\left<\xi(1)-\xi(R(\vsi^1,\vsi^2))\right>
\end{align*}
and from $(\ref{sec3:eq1})$,
\begin{align}\label{sec3:eq2}
\e\left<\frac{\xi(R(\vsi^1,\vsi^2))}{\xi(1)}\right>\geq 1-\sqrt{\frac{2\log 2}{\xi(1)}}.
\end{align}
It is well-known \cite{Pan11:2,Tal07} that the moments of a Parisi measure contain information of the limit of the overlap under $\e\left<\cdot\right>$ through 
\begin{align*}
\lim_{N\rightarrow\infty}\e\left<R(\vsi^1,\vsi^2)^p\right>=\int_0^1q^pd\mu_P(q)
\end{align*}
for all $p\geq 2$ with $\beta_p\neq 0.$ This and \eqref{sec3:eq2} imply that
\begin{align}\label{thm3:eq1}
\int_0^1\frac{\xi(q)}{\xi(1)}d\mu_P=\lim_{N\rightarrow\infty}\e\left<\frac{\xi(R(\vsi^1,\vsi^2))}{\xi(1)}\right>\geq 1-\sqrt{\frac{2\log 2}{\xi(1)}}.
\end{align}

Now suppose on the contrary that $\mu_P$ is either RS or 1RSB. If $\mu_P$ is RS, part $(i)$ in Theorem \ref{thm:theorem1} implies that $\mu_P=\delta_0$. However, this contradicts \eqref{thm3:eq1} since the left side of \eqref{thm3:eq1} is equal to zero, while the right side of the same equation is positive by \eqref{sec3:thm1:eq1}. Now suppose that $\mu_P$ is 1RSB, that is, $\mu_P$ consists of exactly two atoms. Again by part $(i)$ of Theorem \ref{thm:theorem1}, we may assume that $\mu_P=\hat{m}\delta_0+(1-\hat{m})\delta_{\hat{q}}$ for some $0<\hat{m},\hat{q}<1$. Plugging $\mu_P$ into \eqref{thm3:eq1} gives
\begin{align*}
\frac{\xi(\hat{q})}{\xi(1)}(1-\hat{m})\geq 1-\sqrt{\frac{2\log 2}{\xi(1)}}.
\end{align*}
Observe that the left side of this inequality is bounded above by $1-\hat{m}$ and since $u\xi'(u)\geq \xi(u)$ for all $u\geq 0,$ it is also bounded above by $\xi'(\hat{q})/\xi(1).$ We conclude that $\hat{m}$ and $\hat{q}$ must satisfy the following two inequalities,
\begin{align}
\begin{split}
\label{sec1:eq3}
\hat{m}&\leq \sqrt{\frac{2\log 2}{\xi(1)}}
\end{split}
\end{align}
and
\begin{align}
\begin{split}
\label{sec1:eq2}
\xi'(\hat{q})&\geq \xi(1)\left(1-\sqrt{\frac{2\log 2}{\xi({1})}}\right)=\xi(1)-\sqrt{2\xi(1)\log 2}.
\end{split}
\end{align}

\begin{proof} [Proof of Theorem \ref{sec3:thm1}] Note that $\mu_P$ corresponds to $\mathbf{m}=(0,\hat{m},1)$ and $\mathbf{q}=(0,0,\hat{q},1)$ as described in Section \ref{section:pde}. Let $z_0,z_1,z_2$ be independent centered Gaussian random variables with $\e z_0^2=0,$ $\e z_1^2=\xi'(\hat{q})$ and $\e z_2^2=\xi'(1)-\xi'(\hat{q}).$ Then using \eqref{eq:ParisiRecursion},
\begin{align*}
X_3&=\log \cosh (z_0+z_1+z_2)=\log\cosh(z_1+z_2),\\
X_2&=\log\cosh z_1+\frac{1}{2}(\xi'(1)-\xi'(\hat{q})),\\
X_1&=X_0=\frac{1}{\hat{m}}\log\e \exp \hat{m} X_2=\frac{1}{\hat{m}}\log \e \cosh^{\hat{m}}z_1+\frac{1}{2}(\xi'(1)-\xi'(\hat{q})).
\end{align*}
Plugging $X_1$ and $X_2$ into the definition \eqref{sec3:gamma} of $\Gamma_{\mu_P}$ and using Proposition \ref{prop3} we obtain 
\begin{align}\label{eq:banana}
\Gamma_{\mu_P}'(\hat{q})&=\xi''(\hat{q})\e\frac{\exp \hat{m}(X_2-X_1)}{\cosh^4z_1}=\xi''(\hat{q})\frac{\e\cosh^{\hat{m}-4}z_1}{\e\cosh^{\hat{m}}z_1}.
\end{align}
Let us recall two useful facts about a Gaussian random variable,
\begin{align}
\begin{split}
\label{sec3:eq4}
\e e^{a|g|}&=2e^{\frac{a^2}{2}}\phi(a),\,\,\forall a\in\mathbb{R},
\end{split}
\end{align}
and
\begin{align}
\begin{split}\label{sec3:eq7}
\frac{3}{4|a|}&\leq e^{\frac{a^2}{2}}\phi(a)\leq \frac{1}{|a|},\,\,\forall a\leq -2, 
\end{split}
\end{align}
where $\phi(a)=\int_{-a}^\infty e^{-\frac{x^2}{2}}/\sqrt{2\pi}dx$ for $a\in\mathbb{R}.$ Note that since $0\leq \hat{m}\leq 1$ and $\xi(1)\geq 8\log 2,$ it follows from $(\ref{sec1:eq2})$ that $({4-\hat{m}})\sqrt{\xi'(\hat{q})}\geq 2.$ Also note that $\cosh x\leq e^{|x|}.$ Now using $(\ref{sec3:eq4})$ and $(\ref{sec3:eq7})$ with $a=-(4-\hat{m})\sqrt{\xi'(\hat{q})}$ we obtain
\begin{align*}
\e\cosh^{\hat{m}-4}z_1&\geq \e \exp (\hat{m}-4)|z_1|\\
&=2\Phi\left((\hat{m}-4)\sqrt{\xi'(\hat{q})}\right)\exp \frac{1}{2}(\hat{m}-4)^2\xi'(\hat{q})\\
&\geq \frac{3}{2}\frac{1}{({4-\hat{m}})\sqrt{\xi'(\hat{q})}}\\
&\geq \frac{3}{8\sqrt{\xi'(1)}}.
\end{align*}
On the other hand, $(\ref{sec3:eq4})$ with $a=\hat{m}\sqrt{\xi'(\hat{q})}$ gives
\begin{align*}
\e\cosh^{\hat{m}}z_1&\leq \e \exp \hat{m}|z_1|\\
&=2\Phi\left(\hat{m}\sqrt{\xi'(\hat{q})}\right)\exp \frac{\hat{m}^2}{2}\xi'(\hat{q})\\
&\leq 2\exp \frac{\hat{m}^2}{2}\xi'(\hat{q})\\
&\leq 2\exp \frac{\hat{m}^2}{2}\xi'(1).
\end{align*} 
From these two inequalities and \eqref{eq:banana}, we have
\begin{align}\label{sec3:eq6}
\frac{3}{16}\frac{\xi''(\hat{q})}{\sqrt{\xi'({1})}\exp \frac{\hat{m}^2}{2}\xi'(1)}\leq \xi''(\hat{q})\frac{\e\cosh^{\hat{m}-4}z_1}{\e\cosh^{\hat{m}}z_1}=\Gamma_{\mu_P}'(\hat{q}).
\end{align}
Next, note that $u\xi''(u)\geq \xi'(u)$ and $\sqrt{\xi(1)}-\sqrt{2\log 2}\geq  \sqrt{\xi(1)}/2$ since we assumed in \eqref{sec3:thm1:eq1} that $\xi(1)\geq 8\log 2$. From $(\ref{sec1:eq2})$, 
\begin{align}\label{sec3:eq5}
\xi''(\hat{q})=\frac{\hat{q}\xi''(\hat{q})}{\hat{q}}\geq \frac{\xi'(\hat{q})}{\hat{q}}\geq \xi'(\hat{q})\geq \sqrt{\xi(1)}(\sqrt{\xi(1)}-\sqrt{2\log 2})\geq \frac{\xi(1)}{2}.
\end{align}
From $(\ref{sec1:eq3}),$ 
\begin{align}\label{sec3:eq3}
\hat{m}^2\xi'(1)\leq \frac{2\xi'(1)\log 2}{\xi(1)}.
\end{align}
Combining $(\ref{sec3:eq5})$, $(\ref{sec3:eq3})$ and using Theorem \ref{thm0},  we conclude from $(\ref{sec3:eq6})$ that
\begin{align*}
\frac{3}{32}\frac{\xi(1)}{\sqrt{\xi'(1)}2^{\frac{\xi'(1)}{\xi(1)}}}=\frac{3}{16}\frac{\frac{\xi(1)}{2}}{\sqrt{\xi'(1)}\exp \left(\frac{\xi'(1)}{\xi(1)}\log 2\right)}\leq \Gamma_{\mu_P}'(\hat{q})\leq 1.
\end{align*}
However, this contradicts the assumption \eqref{sec3:thm1:eq1} on $\xi.$
\end{proof}

\subsection{Proof of Theorem  \ref{thm:top}}

\begin{proof}[Proof of Theorem \ref{thm:top}] If $q_{M}$ is an isolated point of $\mbox{supp }\mu_P$, it must be a jump discontinuity of $\mu_P$ and this clearly implies our assertion. Assume that $q_{M}$ is not isolated and $\mu_P$ is continuous at this point. Theorem \ref{thm0}, the mean value theorem to $\Gamma_{\mu_P}$ and continuity of $\Gamma_{\mu_P}$ imply
\begin{align}\label{thm:top:proof:eq2}
\xi''(q_{M})\e (\partial_x^2\Phi_{\mu_P}(M(q_{M}),q_{M}))^2\exp W_{\mu_P}(q_{M})=\Gamma_{\mu_P}'(q_{M})=1
\end{align}
and in addition from \eqref{prop3:eq2} and \eqref{prop3:eq3}, 
\begin{align}
\label{thm:top:proof:eq1}
\gamma_{1,\mu_P}(q_{M})-\gamma_{2,\mu_P}(q_{M})=\gamma_{1,\mu_P}(q_{M})-\mu_P([0,q_{M}])\gamma_{2,\mu_P}(q_{M})=\Gamma_{\mu_P}''(q_{M})=0.
\end{align}
Observe that $\Phi_{\mu_P}(x,q_{M})=\log \cosh x+(\xi'(1)-\xi'(q_{M}))/2.$ A straightforward computation yields
\begin{align*}
\partial_x^2\Phi_{\mu_P}(x,q_{M})&=\frac{1}{\cosh^2 x},\\
(\partial_x^3\Phi_{\mu_P}(x,q_{M}))^2&=\frac{4}{\cosh^4x}-\frac{4}{\cosh^6x}.
\end{align*}
Thus, we obtain from \eqref{thm:top:proof:eq2},
\begin{align}
\label{thm:top:proof:eq3}
\e\frac{\exp W_{\mu_P}(q_{M})}{\cosh^4M(q_{M})}&=\frac{1}{\xi''(q_{M})}.
\end{align}
Also since
\begin{align*}
\gamma_{1,\mu_P}(q_{M})&=\xi'''(q_{M})\e\frac{\exp W_{\mu_P}(q_{M})}{\cosh^4 M(q_{M})}\\
&+4\xi''(q_{M})^2\left(\e \frac{\exp W_{\mu_P}(q_{M})}{\cosh^4M(q_{M})}-\e\frac{\exp W_{\mu_P}(q_{M})}{\cosh^6M(q_{M})}\right)
\end{align*}
and
\begin{align*}
\gamma_{2,\mu_P}(q_{M})&=2\xi''(q_{M})^2\e\frac{\exp W_{\mu_P}(q_{M})}{\cosh^6M(q_{M})},
\end{align*}
they imply from \eqref{thm:top:proof:eq1} that 
\begin{align}\label{thm:top:proof:eq4}
\e \frac{\exp W_{\mu_P}(q_{M})}{\cosh^6M(q_{M})}&=\left(\frac{\xi'''(q_{M})}{6\xi''(q_{M})^2}+\frac{2}{3}\right)\e\frac{\exp W_{\mu_P}(q_{M})}{\cosh^4 M(q_{M})}.
\end{align}
Note that $\e \exp W_{\mu_P}(q_{M})=1$ from \eqref{prop4:eq4}. Using Jensen's inequality together with \eqref{thm:top:proof:eq3} and \eqref{thm:top:proof:eq4} gives
\begin{align*}
\frac{1}{\xi''(q_{M})}&=\e\frac{\exp W_{\mu_P}(q_{M})}{\cosh^4M(q_{M})}\leq \left(\e\frac{\exp W_{\mu_P}(q_{M})}{\cosh^6M(q_{M})}\right)^{2/3}
=\frac{1}{\xi''(q_{M})^{2/3}}\left(\frac{\xi'''(q_{M})}{6\xi''(q_{M})^2}+\frac{2}{3}\right)^{2/3}.
\end{align*}
One may simplify this inequality to get equivalently
\begin{align}\label{thm:top:proof:eq5}
1\leq \frac{\xi'''(q_{M})}{6(\xi''(q_{M}))^{3/2}}+\frac{2}{3}\sqrt{\xi''(q_{M})}.
\end{align}
Now since $\xi''(1)\geq \xi''(q_{M})\geq 2\beta_2^2>1$ and $\xi'''(1)\geq \xi'''(q_{M})$, \eqref{thm:top:proof:eq5} yields
\begin{align*}
1<\frac{\xi'''(1)}{6}+\frac{2}{3}\sqrt{\xi''(1)}
\end{align*}
which contradicts the assumption \eqref{thm:top:eq1}. This finishes our proof.
\end{proof}

\section{The spherical case}\label{section:spherical}

We now discuss the analogue of our results to the spherical mixed $p$-spin model. In this section, we set the configuration space to be $$\Sigma_N^s =  \bigg \{\vsi \in \R^N \bigg | \sum_{i=1}^{N} \sigma_i^2 = N \bigg \} .$$ On the sphere $\Sigma_N^s$ we consider the same Hamiltonian $H_{N}$ as in  \eqref{eq:bqs}. The spherical mixed $p$-spin was introduced by Crisanti-Sommers \cite{CriSom} as a possible simplification of the mixed $p$-spin model in the hypercube $\Sigma_N$. The main difference from the model with Ising spin configurations is that the analogous Parisi functional has a much simpler formula. This formula was discovered by Crisanti-Sommers \cite{CriSom} and proved by Talagrand \cite{Tal07s} and Chen \cite{Chen12}.  We describe it now. As before, given a probability measure $\mu$ on $[0,1]$, consider its distribution function $x_{\mu}(q) = \mu([0,q]).$
For $q \in [0,1]$, let

\begin{equation*}
\hat{x}_\mu(q) = \int_{q}^{1} x_\mu(s) d s .
\end{equation*}
Assuming that $x_\mu(\hat{q}) = 1$ for some $\hat{q} < 1$, define
\begin{equation*}\label{eq:ParisifunctionalSphere}
\mathcal P^s(\mu) = \frac{1}{2}\bigg(\int_{0}^{1} x_\mu(q) \xi'(q) d q  + \log(1-\hat{q})\bigg)  .
\end{equation*}
Otherwise, set $\mathcal P^s(\mu) = \infty.$

A measure that minimizes $\mathcal P^s$ is called a Parisi measure for the spherical mixed $p$-spin model. The above formula provides two major simplifications compared to \eqref{eq:ParFunctionalFormula}. First, it is known that, for all choices of $\xi$, Parisi measures are unique \cite[Theorem 1.2]{Tal07s}. Second, in the pure $p$-spin model ($\xi(x)=\beta_p^2 x^p$), there exists a $\beta_{p,c} >0$  such that the Parisi measure is RS below $\beta_{p,c}$ and 1RSB for all values of $\beta_p>\beta_{p,c}$ \cite[Proposition 2.2]{Tal07s}. However, for the mixed $p$-spin model, the structure of the Parisi measure is still not known and it is expected \cite{CriLeuzzi06} that the model is FRSB for a certain class of mixtures $\xi$.

We now describe our results for the spherical mixed $p$-spin model. Recall that $\xi(u) = \sum_{p\geq 2}\beta_p^2 u^p$ and assume that $x_\mu(\hat{q}) = 1$ for some $\hat{q} < 1$. Define for $0\leq q\leq\hat{q}$, 
\begin{equation}\label{eq:definitionfandF}
F(q) =\xi'(q) - \int_0^q\frac{ ds}{\hat{x}_\mu(s)^2} , \quad  f(q)=\int_{0}^q F(s)d s,
\end{equation}
and let 
\begin{equation*}S_{\hat{q}}:=  \{s\in[0,\hat{q}]|f(s) = \max_{t \in [0,\hat{q}]} f(t) \}.
\end{equation*}

Note that $S_{\hat{q}}$ depends on the distribution function $x_\mu$. It is known however that there exists $q_1<1$ depending only on $\xi$ such that $\text{supp } \mu_P \subseteq [0,q_1] $  (see discussion on page $6$ of \cite{Tal07s}). We will denote this $q_1$ by $q_1 (\xi)$ and define $S=S_{q_1(\xi)}$. The following characterization of the Parisi measure was proved in Talagrand \cite[Proposition 2.1]{Tal07s}. It mainly relies on the Crisanti-Sommers formula.

\begin{proposition}\label{prop:characterizationSphere}  $\mu_P$  is a Parisi measure if and only if $\mu_P(S)=1$.
\end{proposition}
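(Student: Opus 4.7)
The approach is based on the first-order optimality conditions for the convex minimization problem $\min_\mu \mathcal{P}^s(\mu)$. The key ingredient is convexity of $\mathcal{P}^s$ on the space of probability measures supported in $[0, q_1(\xi)]$, established in \cite{Tal07s}. Since every minimizer is automatically supported in $[0, q_1(\xi)]$, it suffices to optimize over this restricted class, and convexity then gives: $\mu_P$ is a Parisi measure if and only if for every probability measure $\nu$ on $[0, q_1(\xi)]$,
\begin{equation*}
\left.\frac{d}{dt}\right|_{t=0^+}\mathcal{P}^s\bigl((1-t)\mu_P + t\nu\bigr) \geq 0.
\end{equation*}

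The heart of the argument is to evaluate this directional derivative and show that it is governed by $f$. For $\mu_t = (1-t)\mu + t\nu$, both $x_{\mu_t}$ and $\hat{x}_{\mu_t}$ interpolate linearly in $t$, so the first summand $\frac{1}{2}\int_0^1 x_{\mu_t}(q)\xi'(q)\,dq$ differentiates directly. The subtle piece is the logarithmic term $\frac{1}{2}\log(1-\hat{q}_{\mu_t})$, since the endpoint $\hat{q}_{\mu_t}$ depends non-smoothly on the direction $\nu$. To circumvent this, I would first reexpress the functional in a form involving only $\hat{x}_{\mu_t}$ via the identity
\begin{equation*}
\log\hat{x}_{\mu_t}(0) - \log(1-q_1(\xi)) = \int_0^{q_1(\xi)}\frac{x_{\mu_t}(s)}{\hat{x}_{\mu_t}(s)}\,ds,
\end{equation*}
valid because $\hat{x}_{\mu_t}' = -x_{\mu_t}$ and $\hat{x}_{\mu_t}(q_1(\xi)) = 1 - q_1(\xi)$. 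Combined with a Stieltjes integration by parts and Fubini's theorem, the directional derivative should reduce, after some rearrangement, to
\begin{equation*}
\left.\frac{d}{dt}\right|_{t=0^+}\mathcal{P}^s(\mu_t) = c\int_0^{q_1(\xi)} f(q)\,d(\mu - \nu)(q)
\end{equation*}
for some positive constant $c$, with $f$ the function from \eqref{eq:definitionfandF}.

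Granted this formula, the characterization follows by a standard convex-analytic argument. The condition $\int f\,d(\mu_P - \nu) \geq 0$ for every probability $\nu$ supported in $[0, q_1(\xi)]$, specialized to $\nu = \delta_q$ for each $q \in [0, q_1(\xi)]$, gives $\int f\,d\mu_P \geq f(q)$ and hence $\int f\,d\mu_P \geq \max f$; the reverse inequality is automatic, so $\int f\,d\mu_P = \max f$, which is equivalent to $\mu_P(S) = 1$. The converse implication follows from the same displayed formula. The main technical obstacle is the directional-derivative computation: the moving endpoint $\hat{q}_{\mu_t}$ and the non-smooth $\log(1-\hat{q}_{\mu_t})$ resist direct differentiation, and extracting the clean form involving $f$ relies on a nontrivial rearrangement of the integration-by-parts identities and a careful use of Fubini's theorem.
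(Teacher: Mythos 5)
Your variational approach — convexity of $\mathcal P^s$ plus first-order stationarity, reducing everything to the sign of $\int f\,d(\mu_P-\nu)$ — is essentially the same route as Talagrand's in \cite[Proposition 2.1]{Tal07s}, which the present paper cites rather than re-derives; and your concluding convex-analytic step (take $\nu=\delta_q$, deduce $\int f\,d\mu_P=\max f$, hence $\mu_P(S)=1$, and conversely) is correct. However, there is a genuine gap precisely where you flag the ``main technical obstacle'': the directional-derivative formula $\left.\frac{d}{dt}\right|_{t=0^+}\mathcal P^s(\mu_t)=c\int f\,d(\mu-\nu)$ is asserted rather than derived, and the auxiliary identity you propose does not in fact close that gap. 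The identity you wrote rewrites $\log\hat x_{\mu_t}(0)$, a quantity that does not occur in the functional; what actually freezes the moving endpoint $\hat q_{\mu_t}$ is the elementary cancellation
\begin{equation*}
\int_0^{\hat q}\frac{dq}{\hat x_\mu(q)}+\log(1-\hat q)\;=\;\int_0^{q_1(\xi)}\frac{dq}{\hat x_\mu(q)}+\log\bigl(1-q_1(\xi)\bigr),
\end{equation*}
which holds because $\hat x_\mu(q)=1-q$ on $[\hat q,q_1(\xi)]$. (Note also that as printed, \eqref{eq:ParisifunctionalSphere} appears to omit the Crisanti--Sommers term $\tfrac12\int_0^{\hat q}\hat x_\mu(q)^{-1}\,dq$; without it the $\int_0^q \hat x_\mu(s)^{-2}ds$ piece of $F$ in \eqref{eq:definitionfandF} could not possibly arise as a first variation, so your argument must tacitly restore it.) Once the endpoint is frozen at $q_1(\xi)$, $x_{\mu_t}$ and $\hat x_{\mu_t}$ are affine in $t$, and differentiating, writing $\hat x_\nu(q)-\hat x_\mu(q)=\int_q^{q_1(\xi)}(x_\nu-x_\mu)$, applying Fubini, and integrating by parts in $q$ against $F=f'$ (the boundary terms vanish since $x_\nu-x_\mu=0$ at $q_1(\xi)$ and $f(0)=0$) yields $\left.\frac{d}{dt}\right|_{t=0^+}\mathcal P^s(\mu_t)=\tfrac12\int_0^{q_1(\xi)}f\,d(\mu-\nu)$, which is the formula your final paragraph relies on.
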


Using this proposition, we have the following result.

\begin{theorem}\label{th:spherical} Let $\mu_P$ be a Parisi measure. Then the following hold.
\begin{enumerate}
\item[$(i)$] $\mbox{supp }\mu_P \subseteq S.$
\item[$(ii)$] If $(a,b) \subset  \mbox{supp }\mu_P$ with $0\leq a < b \leq 1$, then  $$\mu_P([0,u]) = \frac{\xi'''(u)}{2\xi''(u)^{\frac{3}{2}}}$$ for every $u \in (a,b)$.   Therefore, the distribution of $\mu_P$ is $C^\infty$ on $(a,b)$.

\item[$(iii)$] If $\beta_2 \neq 1/\sqrt{2}$ and $0\in \mbox{supp }\mu_P$, then there exists $ \hat{q} >0$ such that $\mu_P([0,\hat{q}])=\mu_P(\{0\})$.

\item[$(iv)$] Suppose that there exist an increasing sequence $(u_\ell^-)_{\ell\geq 1}$ and a decreasing sequence $(u_\ell^+)_{\ell\geq 1}$ of $\mbox{supp }\mu_P$ such that $\lim_{\ell\rightarrow\infty}u_\ell^-= u_0=\lim_{\ell\rightarrow\infty}u_\ell^+.$ Then $\mu_P$ is continuous at $u_0.$ 

\end{enumerate}
\end{theorem}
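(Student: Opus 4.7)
My plan is to exploit the characterization in Proposition \ref{prop:characterizationSphere}, which says $\mu_P$ is supported on the level set $S = \{s : f(s) = \max f\}$, together with calculus on the function $F = f'$. Throughout I use $\hat x_\mu'(u) = -x_\mu(u)$, which gives $F(u) = \xi'(u) - \int_0^u ds/\hat x_\mu(s)^2$ and $F'(u) = \xi''(u) - 1/\hat x_\mu(u)^2$.

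Part (i) is immediate: $F$ is continuous, so $f$ is $C^1$ and $S$ is closed; Proposition \ref{prop:characterizationSphere} gives $\mu_P(S) = 1$, hence $\mbox{supp }\mu_P \subseteq S$. For part (ii), if $(a,b) \subseteq \mbox{supp }\mu_P \subseteq S$, then $f$ equals $\max f$ on all of $(a,b)$, so $F \equiv 0$, and differentiating this identity gives $\hat x_\mu(u) = 1/\sqrt{\xi''(u)}$ on $(a,b)$; a further differentiation via $\hat x_\mu' = -x_\mu$ yields the explicit formula $x_\mu(u) = \xi'''(u)/(2\xi''(u)^{3/2})$, which is $C^\infty$ since $\xi$ is real analytic with $\xi''>0$ there.

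Part (iv) parallels Theorem \ref{thm:continuity}(i). Since $u_0$ and the $u_\ell^\pm$ all lie in $S$, $f$ vanishes at each; Rolle applied to $f$ yields $v_\ell^\pm \to u_0^\pm$ with $F(v_\ell^\pm) = 0$, and a second application to the $C^1$ function $F$ gives $w_\ell^\pm \to u_0^\pm$ with $F'(w_\ell^\pm) = 0$. Continuity of $F'$ then gives $F'(u_0) = 0$. Passing to the limit in the difference quotients $(F'(w_\ell^\pm) - F'(u_0))/(w_\ell^\pm - u_0) = 0$, the $\xi''$-contribution converges to $\xi'''(u_0)$ from both sides, while the $1/\hat x_\mu^2$-contribution converges to $2 x_\mu(u_0)/\hat x_\mu(u_0)^3$ from the right (by right-continuity of $x_\mu$) and to $2 x_\mu(u_0^-)/\hat x_\mu(u_0)^3$ from the left. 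Equating the two limits yields $x_\mu(u_0^-) = x_\mu(u_0)$, i.e., $\mu_P(\{u_0\}) = 0$.

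For part (iii) I argue by contradiction. If $0$ is not right-isolated in $\mbox{supp }\mu_P$, pick $u_n \downarrow 0^+$ in $\mbox{supp }\mu_P$, so $f(u_n) = f(0) = \max f = 0$. Two applications of Rolle yield $w_n \downarrow 0^+$ with $F'(w_n) = 0$, and continuity of $F'$ forces $F'(0) = 0$, that is $\hat x_\mu(0) = 1/(\sqrt{2}\,\beta_2)$. If $\beta_2 < 1/\sqrt{2}$, this gives $\hat x_\mu(0) > 1$, contradicting $\hat x_\mu(0) = \int_0^1 x_\mu(s)\,ds \leq 1$. The main obstacle is the case $\beta_2 > 1/\sqrt{2}$, where the bound is a priori admissible; here the plan is to iterate the Rolle argument so as to force higher one-sided derivatives of $F$ at $0^+$ to vanish, invoke the formula from (ii) to identify the forced local CDF $x_\mu(u) = \xi'''(u)/(2\xi''(u)^{3/2})$ near $0^+$ and the atom $\mu_P(\{0\}) = \xi'''(0)/(2\xi''(0)^{3/2})$, and then exploit the global identity $\int_0^1 x_\mu(s)\,ds = 1/(\sqrt{2}\,\beta_2)$ together with monotonicity of the CDF to reach a contradiction.
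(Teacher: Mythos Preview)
Your treatments of $(i)$, $(ii)$, and $(iv)$ are correct and coincide with the paper's: closedness of $S$ plus $\mu_P(S)=1$ for $(i)$; the identity $\hat x_\mu=\xi''{}^{-1/2}$ on $(a,b)$ and differentiation for $(ii)$; and computing the two one-sided derivatives of $f''=F'$ at $u_0$ for $(iv)$. Two small corrections: in $(iv)$ you write ``$f$ vanishes at each'', but you mean $f$ equals its common maximal value there; and the first Rolle step is unnecessary, since each $u_\ell^\pm$ (for large $\ell$) is an interior maximum of $f$, so $F=f'$ already vanishes there.

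For $(iii)$ the paper's proof is much terser than yours: after deriving $f''(0)=0$ exactly as you do, it writes only ``This immediately implies that $2\beta_2^2=\xi''(0)=1$'', treating both signs of $\beta_2-1/\sqrt 2$ at once and never mentioning $\hat x_\mu(0)$. Your concern is therefore legitimate: from $f''(0)=\xi''(0)-1/\hat x_\mu(0)^2=0$ one gets only $\hat x_\mu(0)=1/(\sqrt 2\,\beta_2)$, and since non-isolation of $0$ forces $\hat x_\mu(0)<1$, this yields a contradiction for $\beta_2\le 1/\sqrt 2$ but not for $\beta_2>1/\sqrt 2$. The plan you sketch for the remaining case is not a proof, however. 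Iterating Rolle on $f''$ requires $f'''$ on the relevant intervals, but $f'''=\xi'''-2x_\mu/\hat x_\mu^3$ involves the a priori discontinuous CDF $x_\mu$, so a third application of Rolle is not available without first establishing continuity of $\mu_P$ there; and even granting the local density formula near $0$, the closing ``global integral plus monotonicity'' step is not carried out. So for $(iii)$ you reproduce the paper's computation and correctly flag a step it passes over without comment, but your proposed completion remains a plan rather than an argument.
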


\begin{proof}
 Take $x \in \mbox{supp } \mu_P$ and define $M= \max_{t \in [0,\hat{q}]} f(t).$ We claim that there exists a sequence of points $(x_n)_{n \geq 1} \subset S$ such that $(x_n)_{n \geq 1}$ converges to $x$. We argue by contradiction. If our claim does not hold, there exists an open neighborhood $O_x$ of $x$ such that $O_x \cap S = \emptyset$. However, since $x \in \mbox{supp } \mu_P$, $\mu_P(O_x)>0$ and this contradicts Proposition \ref{prop:characterizationSphere}. Now, since $f(x_n)=M$ and $f$ is continuous on $[0,1)$, we get $f(x)=M$ and therefore $x \in S.$ This proves $(i)$.
 
 Next, suppose $(a,b) \subset  \mbox{supp } \mu_P$. From item $(i)$, we have $(a,b) \subset S$.  From \eqref{eq:definitionfandF}, we see that $f$ is twice differentiable on $(0,1)$ with
\begin{equation*}\label{derivative}
f(0) = 0, \ f'(q)=F(q) \text{ and } f''(q)=\xi''(q)-\frac{1}{\hat{x}(q)^2}.
\end{equation*}
Hence, any $u \in (a,b) \subset S$ satisfies $f'(u)=0$ and consequently, 
\begin{equation}\label{eq:der2}
\xi''(u)^{-\frac{1}{2}} = \hat{x}_\mu(u).
\end{equation}
Since $\xi''(u)$ is positive and differentiable for any $u \in (a,b)$, a straightforward computation implies that the right derivative of $\hat{x}_\mu(u)$ is equal to $-\mu_P([0,u])$ and the left derivative is equal to $-\mu_P([0,u))$. By \eqref{eq:der2}, we obtain that $\mu_P([0,u]) = \mu_P([0,u))$ which means $\mu_P$ is continuous on $(a,b)$. Again from \eqref{eq:der2} and the fundamental theorem of calculus, we have
$$ \mu_P([0,u]) = - \hat {x}_\mu'(u)  = \frac{\xi'''(u)}{2\xi''(u)^{\frac{3}{2}}}.$$
This proves $(ii)$. 

Now suppose  $0 \in \mbox{supp }\mu_P$ and the existence of a sequence $u_n \downarrow 0$ such that $u_n \in  \mbox{supp }\mu_P$. Then by part $(i)$ and the mean value theorem, there exists a sequence $u_n' \downarrow 0$ such that $f''(u_n)=0$. By the continuity of $f''$ at $0$, we have $f''(0) = 0$. This immediately implies that $2\beta_2^2 = \xi''(0) =1$ giving item $(iii)$.

Next, to see that $(iv)$ holds, one argues similarly. The two sequences in $S$ converging to $u_0$ imply 
\begin{align*}
0=\lim_{h\rightarrow 0+}\frac{f''(u_0+h)-f''(u_0)}{h}&=\xi'''(u_0)-\frac{\mu_P([0,u_0])}{2\hat{x}_{\mu_P}(u_0)^3},  \\
0=\lim_{h\rightarrow 0-}\frac{f''(u_0+h)-f''(u_0)}{h}&=\xi'''(u_0)-\frac{\mu_P([0,u_0))}{2\hat{x}_{\mu_P}(u_0)^3} .
\end{align*} 
This gives us $\mu_P([0,u_0)) = \mu_P([0,u_0])$.

\end{proof}
\begin{example} [$(2+p)$-spin spherical model]  \rm Consider the case $$\xi(u) = \beta^2((1-t)u^2+tu^p)$$ for $t\in(0,1)$ and $p\geq 4$.
We claim that if  
\begin{equation}\label{eq:condmixed}
\frac{t}{1-t} \leq \frac{4(p-3)}{(p-1)p^2} \text{ and } \beta^2>\frac{1}{2(1-t)},
\end{equation}
then the model is FRSB with a jump at the top of the support. Furthermore, the Parisi measure $\mu_P$ is given by 
\begin{equation}\label{eq:measurea}
\mu_P([0,u]) =
\left\{
\begin{array}{ll}  
\frac{\xi'''(u)}{2\xi''(u)^{\frac{3}{2}}}, &\mbox{for } u<q_M,\\
1, &\mbox{for } u\geq q_M, 
\end{array}
\right.
\end{equation}
for some $q_M \in (0,1).$ 

We use Proposition  \ref{prop:characterizationSphere} to prove this claim. Indeed, it suffices to check  that $\mu_P(S)=1.$  Let $\phi(u)=\xi''(u)^{-1/2}$. Condition \eqref{eq:condmixed} implies that $\phi$ is concave, $\phi(0) <1$ and $\phi(1)>0$. Therefore, the graph of $\phi$ on $[0,1]$ intersects the line $y=1-x$ at a single point $q_M <1$. 
Since $\phi(q)>1-q$ for $q>q_M$, we have $\xi''(q) < 1/(1-q)^2$ for $q>q_M$. This implies that $F(q)<0$ for $q>q_M$. Now, \eqref{eq:measurea} implies $f(q) = 0$ for $q\leq q_M$ and $f(q) <0$ for $q>q_M$. Thus, $S=[0,q_M]$ and $\mu_P(S)=1$. A non-rigorous discussion of this model can be found in \cite{CriLeuzzi06}. 
\end{example}
\bigskip


\begin{flushleft}
\textbf{\Large Appendix}
\end{flushleft}

\noindent This appendix is devoted to proving Proposition \ref{thm1}. Let $\mu\in M[0,1].$ Recall the definition of $W_\mu$ from \eqref{eq:eqU}. It can also be written as
\begin{align}
\label{app:eq1}
W_{\mu}(u)&=\int_0^1\Phi_{\mu}(M(u),u)-\Phi_\mu(M(t\wedge u),t\wedge u)d\mu(t).
\end{align}
Define for $x\in\mathbb{R}$ and $u\in [0,1],$
\begin{align}
\label{app:eq2}
V_{\mu}(x,u)&=\int_0^1\Phi_{\mu}(x+M(1)-M(u),1)-\Phi_{\mu}(x+M(u\vee t)-M(u),u\vee t)d\mu(t).
\end{align}
As we have already mentioned in Section \ref{section:pde}, $\Phi_\mu$ is a continuous function on $\mathbb{R}\times[0,1].$ This implies that the preceding random functions are almost surely continuous.
Define $F_1(w)=w$ and for $j\geq 1,$
\begin{align}
\begin{split}\label{eq1}
F_{j+1}(y_1\ldots,y_j,w)
&=\sum_{i=1}^{j-1}\partial_{y_i}F_{j}(y_1,\ldots,y_{j-1},w)y_{i+1}\\
&+\partial_{w}F_{j}(y_1,\ldots,y_{j-1},w)(1-w^2)+F_j(y_1,\ldots,y_{j-1},w)y_1.
\end{split}
\end{align}
Let us start by summarizing some regularity properties of Parisi PDE solution when the measure consists of a finite number of atoms.

\begin{proposition}\label{lem1} Suppose that $\mu\in M_d[0,1]$ consists of a finite number of atoms on $(q_p)_{p=1}^{k+1}$ with $\mu([0,q_p])=m_{p}$ for $1\leq p\leq k+1.$ Then the following statements hold.
\begin{itemize}
\item[$(i)$] For $j\geq 0,$ 
\begin{align}
\label{app:lem1:eq2}
\begin{split}
\partial_x^j\Phi_\mu&\in C(\mathbb{R}\times[0,1]),\,\,\forall j\geq 0.
\end{split}
\end{align}
In particular, for $j\geq 1,$ 
\begin{align}
\begin{split}\label{lem1:eq1}
\partial_x^j\Phi_\mu(x,u)&=\e F_j(\partial_xV_\mu(x,u),\ldots,\partial_x^{j-1}V_\mu(x,u),\\
&\qquad\tanh(x+M(1)-M(u)))\exp V_\mu(x,u)
\end{split}
\end{align}
and
\begin{align}
\begin{split}
\label{lem1:eq2}
\sup_{\mathbb{R}\times [0,1]}|\partial_x^j\Phi_\mu|\leq C_{0,j}.
\end{split}
\end{align}

\item[$(ii)$] We also have
\begin{align}
\begin{split}\label{lem1:eq4}
\Phi_\mu|_{\mathbb{R}\times[q_{p-1},q_p)}&\in C^\infty(\mathbb{R}\times[q_{p-1},q_p)),\,\,\forall 1\leq p\leq k+1,\\
\Phi_\mu|_{\mathbb{R}\times[q_{k+1},1]}&\in C^\infty(\mathbb{R}\times[q_{k+1},1]).
\end{split}
\end{align}
For $j\geq 0,$
\begin{align}
\begin{split}\label{lem1:eq3}
\max \{\max_{1\leq p\leq k+1}\sup_{\mathbb{R}\times [q_{p-1},q_{p})}\left|\partial_u\partial_x^j\Phi_\mu|_{\mathbb{R}\times[q_{p-1},q_p)}\right|,&
\sup_{\mathbb{R}\times [q_{k+1},1]}\left|\partial_u\partial_x^j\Phi_\mu|_{\mathbb{R}\times[q_{k+1},1]}\right| \}\leq C_{1,j}.
\end{split}
\end{align}
\end{itemize}
Here $C_{i,j}$'s depend only on $\xi$.
\end{proposition}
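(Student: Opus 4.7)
The plan is to exploit the explicit Cole-Hopf formulas \eqref{eq:ParPDE:eq1}--\eqref{eq:ParPDE:eq2} and proceed by backward induction on the sub-interval index $p$. On the top interval $[q_{k+1},1]$ the expression $\Phi_\mu(x,u)=\log\cosh x+\tfrac{1}{2}(\xi'(1)-\xi'(u))$ is manifestly $C^\infty$, with $x$-derivatives bounded by universal constants and $u$-derivatives bounded in terms of $\xi'(1),\xi''(1)$. On each lower interval $[q_p,q_{p+1})$, formula \eqref{eq:ParPDE:eq2} writes $\Phi_\mu(x,u)$ as the log of a Gaussian expectation of $\Phi_\mu(\cdot,q_{p+1})$ shifted by $\sqrt{\xi'(q_{p+1})-\xi'(u)}\,Z$. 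Since the inductive hypothesis gives bounded $x$-derivatives at level $p+1$, one may differentiate any number of times under the Gaussian integral to obtain $C^\infty$-regularity on $[q_p,q_{p+1})$ in $x$ and, via the Parisi PDE \eqref{eq:ParPDE} (noting that $\mu([0,u])$ is locally constant), in $u$ as well. Continuity of $\partial_x^j\Phi_\mu$ on $\R\times[0,1]$ follows because at $u\to q_{p+1}^-$ the Gaussian variance in \eqref{eq:ParPDE:eq2} vanishes and dominated convergence yields agreement with the value at $u=q_{p+1}$ computed from the adjacent interval.

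For the stochastic representation \eqref{lem1:eq1}, I would treat $j=1$ first. Differentiating \eqref{eq:ParPDE:eq2} in $x$ gives
\begin{equation*}
\partial_x\Phi_\mu(x,u)=\frac{\e\bigl[\partial_x\Phi_\mu(x+\sqrt{\Delta_p}Z,q_{p+1})\exp m_p\Phi_\mu(x+\sqrt{\Delta_p}Z,q_{p+1})\bigr]}{\e\exp m_p\Phi_\mu(x+\sqrt{\Delta_p}Z,q_{p+1})},
\end{equation*}
where $\Delta_p=\xi'(q_{p+1})-\xi'(u)$. Iterating this ratio-identity backward from level $k+1$ down to the interval containing $u$ telescopes into a single Gaussian expectation: the accumulated Radon--Nikodym weights at each level, with exponent $m_p$ on $[q_p,q_{p+1})$, are precisely what the definition \eqref{app:eq2} of $V_\mu$ packages, producing $\partial_x\Phi_\mu(x,u)=\e[\tanh(x+M(1)-M(u))\exp V_\mu(x,u)]$ together with the identity $\e\exp V_\mu(x,u)=1$. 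The induction in $j$ is then purely algebraic: differentiating \eqref{lem1:eq1} in $x$ and using $\partial_x\tanh=1-\tanh^2$ and $\partial_x(\partial_x^iV_\mu)=\partial_x^{i+1}V_\mu$ reproduces exactly the recursion \eqref{eq1} for $F_{j+1}$.

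The uniform bounds \eqref{lem1:eq2} follow by a secondary induction on $j$: assuming $|\partial_x^i\Phi_\mu|\le C_{0,i}$ for $i\le j$, the definition of $V_\mu$ gives $|\partial_x^iV_\mu|\le 2C_{0,i}$, and inserting into \eqref{lem1:eq1} bounds $|\partial_x^{j+1}\Phi_\mu|$ by a polynomial in these constants times $\e\exp V_\mu=1$. For \eqref{lem1:eq3}, one differentiates the Parisi PDE \eqref{eq:ParPDE} $j$ times in $x$: since $\mu([0,u])$ is constant on each sub-interval, the right-hand side is a polynomial in $\partial_x^i\Phi_\mu$ for $i\le j+2$ with coefficients bounded by $\xi''(1)$, yielding the claimed uniform bound. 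The main obstacle will be the $j=1$ base case of the stochastic representation: the function $V_\mu$ is engineered precisely so that the iterated Gaussian tilts collapse into a single expectation against $\exp V_\mu$, and making this identification rigorous requires careful bookkeeping of the Gaussian increments $M(q_{p+1})-M(q_p)$ and of the weights $m_p$ across all sub-intervals traversed by the recursion, together with the verification of the normalization $\e\exp V_\mu(x,u)=1$ that underpins all the subsequent bounds.
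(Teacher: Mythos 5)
Your plan is correct and tracks the paper's own argument quite closely: a joint induction on $j$ for the stochastic representation \eqref{lem1:eq1} and the bound \eqref{lem1:eq2}, driven by the recursion \eqref{eq1} for $F_j$ together with the normalization $\e\exp V_\mu=1$; and interval-wise $C^\infty$ regularity plus the bound \eqref{lem1:eq3} via the Cole--Hopf formulas \eqref{eq:ParPDE:eq1}--\eqref{eq:ParPDE:eq2} and the Parisi PDE with $\mu([0,u])$ locally constant. The one genuine departure is the $j=1$ base case: the paper simply invokes Lemmas 3.3 and 3.4 of Talagrand \cite{Tal07}, which assert the representation $\partial_x\Phi_\mu(x,u)=\e\bigl[\tanh(x+M(1)-M(u))\exp V_\mu(x,u)\bigr]$ and $\e\exp V_\mu(x,u)=1$, whereas you propose to re-derive them from scratch by iterating the Cole--Hopf ratio identity backward through the sub-intervals. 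This telescoping does work: on $[q_{p_0},q_{p_0+1})$ one pulls out $\exp(-m_{p_0}\Phi_\mu(x,u))$, and as one pushes the numerator back through each level the factors $\exp\bigl((m_{p-1}-m_p)\Phi_\mu(\,\cdot\,,q_p)\bigr)$ accumulate and combine with $\log\cosh$ at the top so that the exponent becomes exactly $V_\mu$ as defined in \eqref{app:eq2}, while the iterated Cole--Hopf normalizations produce $\e\exp V_\mu=1$. You are right to flag this bookkeeping as the main obstacle; it is purely algebraic but not short, which is presumably why the paper offloads it to the cited reference (one should also handle the degenerate case $m_1=0$, where the Cole--Hopf step is a plain expectation rather than a $1/m_1$-th power). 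The only other difference is organizational: you establish interval-wise smoothness and continuity across endpoints first (via the explicit Cole--Hopf formulas and dominated convergence as the Gaussian variance degenerates), whereas the paper first proves \eqref{lem1:eq1}--\eqref{lem1:eq2} and then extracts continuity \eqref{app:lem1:eq2}; either order is fine, and the rest of your plan (bounding $\partial_x^iV_\mu$ by $2C_{0,i}$, then $\partial_x^{j+1}\Phi_\mu$ by a polynomial in these constants times $\e\exp V_\mu=1$, and bounding $\partial_u\partial_x^j\Phi_\mu$ by $j$-fold $x$-differentiation of the PDE) matches the paper.
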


\begin{proof} 
First let us prove $(\ref{lem1:eq1})$ and $(\ref{lem1:eq2})$ simultaneously by induction. The base case $j=1$ relies on Lemmas 3.3 and 3.4 in \cite{Tal07} which state respectively that $(\ref{lem1:eq1})$ holds for $j=1$ and that $\e\exp V_\mu=1$. These and $|\tanh|\leq 1$ imply \eqref{lem1:eq2} with $j=1$ and $C_{0,1}=1.$ Suppose that there exists some $j\geq 1$ such that $(\ref{lem1:eq1})$ and \eqref{lem1:eq2} hold for all $1\leq j'\leq j$. Note that $\tanh'=1-\tanh^2.$ From these, a direct differentiation leads to 
\begin{align*}
\partial_x^{j+1}\Phi_\mu(x,u)&=\e F_{j+1}(\partial_xV_\mu(x,u),\ldots, \partial_x^jV_\mu(x,u),\\
&\qquad \tanh(x+M(1)-M(u)))\exp V_\mu(x,u).
\end{align*}
Now observe that since $F_{j+1}$ is a polynomial and $\e \exp V_\mu=1,$ they together with the induction hypothesis give $(\ref{lem1:eq2})$ when $j$ is replaced by $j+1.$ This completes the proofs for $(\ref{lem1:eq1})$ and $(\ref{lem1:eq2}).$ Also an induction argument by using the definition \eqref{app:eq2} of $V_\mu$ and \eqref{lem1:eq1} yield \eqref{app:lem1:eq2}. This gives $(i).$

Next let us prove $(ii).$ Recall that $\Phi_\mu$ can be solved by \eqref{eq:ParPDE:eq1} and \eqref{eq:ParPDE:eq2}. By an induction argument using Gaussian integration by parts, \eqref{lem1:eq4} follows immediately.  As for $(\ref{lem1:eq3})$, note that $\Phi_\mu$ satisfies
$$
\partial_u\Phi_\mu(x,u)=-\frac{\xi''(u)}{2}\left(\partial_x^2\Phi_\mu(x,u)+m_{p-1}(\partial_x\Phi_\mu(x,u))^2\right)
$$
whenever $(x,u)\in \mathbb{R}\times[q_{p-1},q_{p})$ for $0\leq p\leq k+1$ and $(x,u)\in \mathbb{R}\times[q_{k+1},1].$ The inequalities \eqref{lem1:eq2} together with an application of the product rule imply
\begin{align*}
&\max \{ \max_{1\leq p\leq k+1}\sup_{\mathbb{R}\times[q_{p-1},q_{p})}|\partial_u\partial_{x}^{j}\Phi_\mu|,\sup_{\mathbb{R}\times[q_{k+1},1]}|\partial_u\partial_{x}^{j}\Phi_\mu| \}
\\
&\leq\frac{\xi''(1)}{2}\left(C_{0,j+2}+\sum_{j'=0}^{j}{j\choose j'}C_{0,j'+1}C_{0,j-j'+1}\right).
\end{align*}
This finishes our proof.
\end{proof}

\begin{lemma}\label{app:lem1}
There exists a constant $C$ depending only on $\xi$ such that for any $\mu\in M[0,1],$
\begin{align}
\begin{split}\label{app:lem1:eq1}
\sup_{\mathbb{R}\times[0,1]}\e\exp 3V_{\mu}\leq C,
\end{split}\\
\begin{split}\label{app:lem1:eq3}
\sup_{[0,1]}\e\exp 3W_{\mu}\leq C.
\end{split}
\end{align}
\end{lemma}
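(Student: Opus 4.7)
The plan is a two-step argument: first establish the exponential bounds for atomic $\mu\in M_d[0,1]$ by means of a crude but $\mu$-uniform almost-sure pointwise bound on $V_\mu$ and $W_\mu$, and then pass to general $\mu\in M[0,1]$ by the standard atomic approximation used in the construction of the Parisi PDE solution. The strategy exploits the fact that the only way $V_\mu$ and $W_\mu$ depend on $\mu$ is through $\Phi_\mu$, and that $\Phi_\mu$ enjoys $\mu$-independent Lipschitz estimates in both variables in the atomic setting.

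Concretely, for $\mu\in M_d[0,1]$ I would combine two ingredients from Proposition \ref{lem1}: the $x$-Lipschitz bound $|\partial_x\Phi_\mu|\leq C_{0,1}$ from \eqref{lem1:eq2}, and the $u$-Lipschitz bound obtained from the piecewise estimate $|\partial_u\Phi_\mu|\leq C_{1,0}$ in \eqref{lem1:eq3} together with the global continuity \eqref{app:lem1:eq2} of $\Phi_\mu$ through the finitely many atoms of $\mu$. Both constants depend only on $\xi$, so one obtains
\[
|\Phi_\mu(x+a,s)-\Phi_\mu(x+b,s')|\leq C_{0,1}|a-b|+C_{1,0}|s-s'|
\]
for all $a,b,x\in\R$ and $s,s'\in[0,1]$. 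Inserting this bound into the definitions \eqref{app:eq2} and \eqref{eq:eqU} yields the deterministic pointwise estimates
\[
|V_\mu(x,u)|\leq C_{0,1}\sup_{s\in[0,1]}|M(1)-M(s)|+C_{1,0},\qquad |W_\mu(u)|\leq C_{0,1}\sup_{s,t\in[0,1]}|M(s)-M(t)|+C_{1,0},
\]
whose right-hand sides are random variables that depend neither on $\mu$ nor on $(x,u)$.

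It then remains only to verify that tripling these Brownian suprema still produces a finite expectation. This is immediate from the reflection principle and the subgaussian tails of $M(\cdot)=B(\xi'(\cdot))$: for any $\lambda>0$, $\e\exp\bigl(\lambda\sup_{\tau\in[0,\xi'(1)]}|B(\tau)|\bigr)\leq C(\xi,\lambda)$. This establishes \eqref{app:lem1:eq1} and \eqref{app:lem1:eq3} for $\mu\in M_d[0,1]$ with a constant depending only on $\xi$. To extend to general $\mu\in M[0,1]$, I would pick atomic $\mu_n\to\mu$ in the metric $d$; the Lipschitz continuity of $\mu\mapsto\Phi_\mu$ recalled in Section \ref{section:pde} gives uniform convergence $\Phi_{\mu_n}\to\Phi_\mu$, which together with weak convergence of $\mu_n$ gives $V_{\mu_n}(x,u)\to V_\mu(x,u)$ and $W_{\mu_n}(u)\to W_\mu(u)$ almost surely. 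Fatou's lemma applied to these convergences, using the $\mu$-uniform dominating bound above, transports the estimates to $\mu$.

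The only place that requires genuine care is the $u$-Lipschitz bound in the atomic case: Proposition \ref{lem1}(ii) supplies $|\partial_u\Phi_\mu|\leq C_{1,0}$ only on each open piece $[q_{p-1},q_p)$, so one must use the global continuity of $\Phi_\mu$ in $(x,u)$ guaranteed by \eqref{app:lem1:eq2} to stitch the piecewise bound into a genuinely global Lipschitz estimate with constant depending only on $\xi$. Once this is in hand the rest of the argument is essentially bookkeeping: a deterministic envelope for $|V_\mu|$ and $|W_\mu|$, a Gaussian moment bound for the Brownian supremum, and a routine passage to the limit.
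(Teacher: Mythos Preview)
Your proposal is correct and follows essentially the same approach as the paper: both arguments use the $\mu$-independent Lipschitz bounds on $\Phi_\mu$ from Proposition~\ref{lem1} to obtain a pointwise envelope for $W_\mu$ (and $V_\mu$) in terms of increments of $M$, control the exponential moment via Gaussian estimates, and then pass to general $\mu$ by atomic approximation and Fatou. The only cosmetic difference is that the paper decomposes $W_\mu$ as a convex combination over the atoms and applies H\"older's inequality, whereas you bound directly by the Brownian supremum---your route is slightly more streamlined but not materially different.
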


\begin{proof}
Since the arguments for both \eqref{app:lem1:eq1} and \eqref{app:lem1:eq3} are the same, we will only provide the detailed proof for \eqref{app:lem1:eq3}. First we claim that it holds when $\mu\in M_d[0,1]$. Observe that for $t\geq u,$
\begin{align*}
\Phi_\mu(M(u),u)-\Phi_\mu(M(t\wedge u),t\wedge u)=0
\end{align*}
and for $t<u,$ we may write
\begin{align*}
&\Phi_\mu(M(u),u)-\Phi_\mu(M(t\wedge u),t\wedge u)\\
&=\Phi_\mu(M(u),u)-\Phi_\mu(M(u),t\wedge u)\\
&+\Phi_\mu(M(u),t\wedge u)-\Phi_\mu(M(t\wedge u),t\wedge u)
\end{align*}
and apply the mean value theorem, \eqref{lem1:eq2} and \eqref{lem1:eq3} to get 
\begin{align*}
|\Phi_\mu(M(u),u)-\Phi_\mu(M(t\wedge u),t\wedge u)|&\leq C(|M(u)-M(t\wedge u)|+|u-t\wedge u|).
\end{align*}
Thus, 
\begin{align*}
|W_\mu(u)|&\leq C\sum_{p\geq 1:q_p\leq u}(m_p-m_{p-1})(|M(u)-M(q_p\wedge u)|+|u-t\wedge u|)\\
&\leq C\sum_{p=1}^{k+1}(m_p-m_{p-1})(|M(u)-M(q_p\wedge u)|+1).
\end{align*}
Note that for $1\leq p\leq k+1,$
\begin{align*}
\e \exp \pm 3C(M(u)-M(q_p\wedge u))=\exp \frac{9C^2}{2}|\xi'(u)-\xi'(q_p\wedge u)|\leq \exp \frac{9C^2}{2}\xi'(1).
\end{align*}
Since $\sum_{p=1}^{k+1}(m_p-m_{p-1})=1,$ using H\"{o}lder's inequality and the inequality $e^{|y|}\leq e^{y}+e^{-y}$ for all $y\in\mathbb{R}$ imply
\begin{align*}
\e \exp 3W_\mu(u)&\leq \exp 3C \prod_{p=1}^{k+1}(\e\exp 3C|M(u)-M(q_p\wedge u)|)^{m_p-m_{p-1}}\\
&\leq \exp 3C\prod_{p=1}^{k+1}\left(2\exp \frac{9C^2}{2}\xi'(1)\right)^{m_p-m_{p-1}}\\
&=2\exp \left(3C+\frac{9C^2}{2}\xi'(1)\right).
\end{align*}
For general $\mu,$ we approximate $\mu$ by a sequence of probability measures $(\mu_n)_{n\geq 1}\subset M[0,1]$. Since $\Phi_\mu(M(u),u)-\Phi_\mu(M(t\wedge u),t\wedge u)$ is a continuous function in $t\in[0,1],$ the weak convergence of $(\mu_n)_{n\geq 1}$ implies $\lim_{n\rightarrow\infty}W_{\mu_n}=W_\mu$ a.s. and the Fatou lemma concludes our assertion.
\end{proof}

Next, we will need a basic lemma in probability theory.

\begin{lemma}\label{lem3} Assume that $(\mu_n)_{n\geq 1}\subset M[0,1]$ converges weakly to $\mu$. For $n\geq 1,$ suppose that $\mathcal{F}_n$ is a collection of continuous functions on $[0,1]$ such that $\sup_{h\in\mathcal{F}_n}\|h\|_\infty\leq M$ and there are $\varepsilon,\delta>0$ such that $\sup_{h\in\mathcal{F}_n}|h(w)-h(w')|<\varepsilon$ whenever $|w-w'|<\delta.$ Then 
$$
\limsup_{n\rightarrow\infty}\sup_{h\in\mathcal{F}_n}\left|\int_0^1 hd\mu_n-\int_0^1 hd\mu\right|\leq 2\varepsilon.
$$
\end{lemma}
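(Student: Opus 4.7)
The plan is a standard triangle-inequality argument: we approximate each $h\in\mathcal{F}_n$ by a step function on a fixed partition of $[0,1]$ (the same partition for every $n$), use equicontinuity to control the approximation error, and then use weak convergence of $\mu_n$ to $\mu$ to control the difference of the step-function integrals.

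First I would choose a partition $0=w_0<w_1<\cdots<w_K=1$ with $w_i-w_{i-1}<\delta$ for every $i$, and with each interior point $w_i$ ($1\le i\le K-1$) chosen to be a continuity point of the distribution function of $\mu$. This is possible since the set of atoms of $\mu$ is at most countable. Set $I_i=[w_{i-1},w_i)$ for $1\le i\le K-1$ and $I_K=[w_{K-1},w_K]$, so that $\{I_i\}$ partitions $[0,1]$; by the choice of continuity points, $\mu_n(I_i)\to\mu(I_i)$ for each $i$.

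For each $h\in\mathcal{F}_n$, define $\tilde h$ to take the value $h(w_{i-1})$ on $I_i$. By the equicontinuity hypothesis together with $w_i-w_{i-1}<\delta$, we have $\|h-\tilde h\|_\infty\le\varepsilon$. Consequently, for any probability measure $\nu$ on $[0,1]$,
\begin{align*}
\left|\int_0^1 h\, d\nu-\int_0^1 \tilde h\, d\nu\right|\le\varepsilon.
\end{align*}
Applying this to both $\nu=\mu_n$ and $\nu=\mu$ and adding, the triangle inequality yields
\begin{align*}
\sup_{h\in\mathcal{F}_n}\left|\int_0^1 h\, d\mu_n-\int_0^1 h\, d\mu\right|
\le 2\varepsilon+\sup_{h\in\mathcal{F}_n}\left|\sum_{i=1}^K h(w_{i-1})\bigl(\mu_n(I_i)-\mu(I_i)\bigr)\right|.
\end{align*}
Since $|h(w_{i-1})|\le M$ uniformly in $h$, the last supremum is bounded by $M\sum_{i=1}^K|\mu_n(I_i)-\mu(I_i)|$, which is a \emph{finite} sum each of whose terms tends to $0$ as $n\to\infty$. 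Taking $\limsup_{n\to\infty}$ of the entire display therefore gives the desired bound $2\varepsilon$.

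The only subtlety to be careful about is the step of reducing to step-function integrals: the partition must be chosen once and for all \emph{independently of $n$ and $h$}, and with boundary points that are continuity points of $\mu$, so that the weak convergence $\mu_n\to\mu$ produces convergence of $\mu_n(I_i)$ to $\mu(I_i)$ on each atomic cell. Everything else is routine once the right partition is fixed.
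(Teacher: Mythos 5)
Your proposal is correct and follows essentially the same route as the paper: partition $[0,1]$ into cells of diameter less than $\delta$ whose interior endpoints are continuity points of the distribution function of $\mu$, approximate each $h$ by the corresponding step function to within $\varepsilon$ in sup norm via equicontinuity, and then let weak convergence force $\mu_n(I_i)\to\mu(I_i)$ on each cell. The only cosmetic differences (left-closed vs.\ right-closed cells, writing $\mu_n(I_i)-\mu(I_i)$ directly rather than unpacking into distribution-function increments) are immaterial.
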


\begin{proof}
Let $F_n$ and $F$ be the distribution functions of $\mu_n$ and $\mu,$ respectively. Let us partition $[0,1]$ by intervals $I_1=[0,a_1],I_2=(a_1,a_2],\ldots,I_k=(a_{k-1},a_k]$ with $|I_j|<\delta$ for $j=1,2\ldots,k$. These $I_j$ may be chosen so that they are intervals of continuity of $F.$ Since each $h\in\mathcal{F}_n$ satisfies $\sup_{w,w'\in I_j}|h(w)-h(w')|\leq \varepsilon,$ we can approximate $h$ by a step function $g$ assuming constant value in each $I_j$ and such that $|h(w)-g(w)|<\varepsilon$ for all $w\in [0,1].$ Then
\begin{align*}
\left|\int_0^1 (h-g)d\mu\right|&\leq \varepsilon,\\
\left|\int_0^1 (h-g)d\mu_n\right|&\leq \varepsilon,\forall n\geq 1\ .
\end{align*}
Since $F_n(a_j)\rightarrow F(a_j)$ for each $j=1,2,\ldots,k-1$ and $F_{n}(1)=1\rightarrow F(1)=1$, we have that for sufficiently large $n,$
\begin{align*}
\left|\int_0^1 hd(\mu-\mu_n)\right|&\leq \left|\int_0^1 (h-g)d\mu\right|+\left|\int_0^1 gd(\mu-\mu_n)\right|+\left|\int_0^1 (g-h)d\mu_n\right|\\
&\leq 2\varepsilon+\left|\int_0^1 gd(\mu-\mu_n)\right|\\
&\leq 2\varepsilon+\sum_{j=0}^{k-1}|g(a_{j+1})|\left|(F_n(a_{j+1})-F_n(a_j))-(F(a_{j+1})-F(a_j))\right|\\
&\leq  2\varepsilon+M\sum_{j=0}^{k-1}\left(|F_n(a_{j+1})-F(a_{j+1})|+|F_{n}(a_{j})-F(a_j)|\right).
\end{align*}
Since this holds for any $h\in\mathcal{F}_n$, letting $n\rightarrow\infty$ gives our assertion.
\end{proof}

Using the preceding lemma, we have the following:

\begin{lemma}\label{lem5}
Suppose $(\mu_n)_{n\geq 1}\subset M_d[0,1]$ converges weakly to $\mu$.
\begin{itemize}
\item[$(i)$] If $(\partial_x^j\Phi_{\mu_n})_{n\geq 1}$ converges to $\partial_x^j\Phi_\mu$ uniformly on $\mathbb{R}\times [0,1]$ for $j\geq 0$, then
\begin{align}\label{lem5:eq1}
\lim_{n\rightarrow\infty}\sup_{\in\mathbb{R}\times[0,1]}\e|\partial_x^jV_{\mu_n}- \partial_x^jV_\mu|^3=0.
\end{align}
\item[$(ii)$] We have
\begin{align}
\label{lem5:eq2}
\lim_{n\rightarrow\infty}\sup_{[0,1]}\e|W_{\mu_n}-W_\mu|^3=0.
\end{align}
\end{itemize}

\end{lemma}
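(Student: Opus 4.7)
The strategy for both parts is the same: split the difference into a term that vanishes deterministically from a uniform convergence of $\Phi_{\mu_n}$ (or its derivatives) and a term of the form $\int f\,d(\mu_n-\mu)$ handled pathwise by Lemma \ref{lem3}, then upgrade the resulting pathwise convergence to $L^3$ convergence via dominated convergence. For part (i), I would set
$$H_\mu(x,u,t):=\partial_x^j\Phi_\mu(x+M(1)-M(u),1)-\partial_x^j\Phi_\mu(x+M(u\vee t)-M(u),u\vee t),$$
so that $\partial_x^j V_\mu(x,u)=\int_0^1 H_\mu(x,u,t)\,d\mu(t)$, and write
$$\partial_x^j V_{\mu_n}-\partial_x^j V_\mu=\int_0^1(H_{\mu_n}-H_\mu)(x,u,t)\,d\mu_n(t)+\int_0^1 H_\mu(x,u,t)\,d(\mu_n-\mu)(t).$$
The first summand has absolute value at most $2\|\partial_x^j\Phi_{\mu_n}-\partial_x^j\Phi_\mu\|_\infty$, which tends to $0$ uniformly in $(x,u)$ by the hypothesis of (i).

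For the second summand I would apply Lemma \ref{lem3} pathwise to the family $\mathcal{F}=\{t\mapsto H_\mu(x,u,t):(x,u)\in\mathbb{R}\times[0,1]\}$. Uniform boundedness by $2C_{0,j}$ is immediate from Proposition \ref{lem1}. To get equicontinuity in $t$ uniformly in $(x,u)$, I would use that $H_\mu(x,u,\cdot)$ is constant on $[0,u]$, and for $t_1,t_2\in[u,1]$ estimate
$$|H_\mu(x,u,t_1)-H_\mu(x,u,t_2)|\leq C_{1,j}|t_1-t_2|+C_{0,j+1}|M(t_1)-M(t_2)|.$$
The second term is a mean value theorem argument using the $x$-derivative bound in Proposition \ref{lem1}; the first relies on the Lipschitz estimate $|\partial_x^j\Phi_\mu(y,t_1)-\partial_x^j\Phi_\mu(y,t_2)|\leq C_{1,j}|t_1-t_2|$ uniformly in $y$, which for $\mu\in M_d[0,1]$ is obtained by combining the piecewise bound on $\partial_u\partial_x^j\Phi_\mu$ from Proposition \ref{lem1}(ii) with the continuity of $\partial_x^j\Phi_\mu$ across breakpoints from Proposition \ref{lem1}(i), and then extends to general $\mu\in M[0,1]$ by approximating along $\tilde\mu_k\in M_d[0,1]$ with $d(\tilde\mu_k,\mu)\to 0$ and invoking the inductive hypothesis of Proposition \ref{thm1}(i) to pass to the limit.

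Given this equicontinuity, Lemma \ref{lem3} yields $\sup_{(x,u)}\bigl|\int H_\mu\,d(\mu_n-\mu)\bigr|\to 0$ almost surely; joint continuity of $H_\mu$ in $(x,u)$ makes the supremum measurable via restriction to a countable dense subset, and its uniform bound $4C_{0,j}$ lets dominated convergence deliver $\e\sup_{(x,u)}|\partial_x^j V_{\mu_n}-\partial_x^j V_\mu|^3\to 0$, which is stronger than \eqref{lem5:eq1}. Part (ii) is entirely parallel with $\psi_\mu(t)=\Phi_\mu(M(u),u)-\Phi_\mu(M(t\wedge u),t\wedge u)$ in place of $H_\mu$: the deterministic vanishing of the first summand uses Guerra's Lipschitz bound $\|\Phi_{\mu_n}-\Phi_\mu\|_\infty\leq L\,d(\mu_n,\mu)$ together with $d(\mu_n,\mu)\to 0$ (which follows from weak convergence on $[0,1]$ by bounded convergence), and since $\Phi_\mu$ grows only linearly in $|x|$ rather than being bounded, the pathwise supremum is dominated by a constant times $\sup_{s\in[0,1]}|M(s)|$, which is in $L^3$ and permits DCT. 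The main obstacle is the uniform-in-$y$ Lipschitz-in-$u$ estimate for general $\mu$, which forces both the approximation argument and, in (i), reliance on the inductive uniform convergence that is the very content of Proposition \ref{thm1}(i).
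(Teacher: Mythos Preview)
Your decomposition and overall strategy are correct and close to the paper's, with two differences worth noting. First, you place $H_\mu$ (built from $\Phi_\mu$) in the integral against $d(\mu_n-\mu)$, whereas the paper places the $\mu_n$-dependent integrand $A_n^{f,u}(t)=\Phi_{\mu_n}(f(u),u)-\Phi_{\mu_n}(f(t\wedge u),t\wedge u)$ (and, for part~(i), the analogous expression with $\partial_x^j\Phi_{\mu_n}$) against $d(\mu-\mu_n)$. Since $\mu_n\in M_d[0,1]$, the paper obtains the Lipschitz bounds in both $x$ and $u$ directly from Proposition~\ref{lem1}, sidestepping the approximation-plus-inductive-hypothesis detour that you correctly flag as ``the main obstacle''; swapping the roles of $\mu$ and $\mu_n$ in your splitting would remove that obstacle entirely and make Lemma~\ref{lem5} logically independent of the induction in Proposition~\ref{thm1}. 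Second, your handling of the weak-convergence term is more elementary than the paper's: you apply Lemma~\ref{lem3} pathwise (using uniform continuity of the fixed sample path $t\mapsto M(t)$ to supply the equicontinuity) and then upgrade to $L^3$ by dominated convergence, while the paper instead uses tightness of $(M(u))_{0\le u\le 1}$ in $C[0,1]$ to restrict to a compact set, covers it by finitely many balls $B(f_i,\varepsilon)$, applies Lemma~\ref{lem3} uniformly over each ball, and controls the complement via the moment bounds underlying Lemma~\ref{app:lem1}. Your pathwise-plus-DCT route is cleaner and yields the same conclusion; combining it with the paper's choice of splitting would give the shortest correct proof.
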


\begin{proof} Again, since crucial arguments for both \eqref{lem5:eq1} and \eqref{lem5:eq2} are essentially the same, we will only prove \eqref{lem5:eq2}. For $f\in C[0,1]$ and $(x,u)\in\mathbb{R}\times[0,1],$ we define 
\begin{align*}
A_{n}^{f,u}(t)&=\Phi_{\mu_n}(f(u),u)-\Phi_{\mu_n}(f(t\wedge u),t\wedge u),\,\,\forall n\geq 1,\\
A^{f,u}(t)&=\Phi_\mu(f(u),u)-\Phi_\mu(f(t\wedge u),t\wedge u).
\end{align*}
For $\varepsilon>0$ and $f_0\in C[0,1],$ set an open ball $$
B(f_0,\varepsilon)=\{f\in C[0,1]:\|f-f_0\|_\infty<\varepsilon\}$$ 
and for $n\geq 1,$
$$
\mathcal{F}_n(f_0)=\{A_{n}^{f,u}:u\in [0,1],f\in B(f_0,\varepsilon)\}\subset C[0,1].
$$
For arbitrary $0\leq t,t'\leq 1,$ write
\begin{align*}
A_{n}^{f,u}(t)-A_{n}^{f,u}(t')&=\Phi_{\mu_n}(f(t\wedge u),t\wedge u)-\Phi_{\mu_n}(f(t'\wedge u),t'\wedge u)\\
&=\Phi_{\mu_n}(f(t\wedge u),t\wedge u)-\Phi_{\mu_n}(f(t\wedge u),t'\wedge u)\\
&+\Phi_{\mu_n}(f(t\wedge u),t'\wedge u)-\Phi_{\mu_n}(f(t'\wedge u),t'\wedge u).
\end{align*}
Using \eqref{lem1:eq2} and \eqref{lem1:eq3}, the mean value theorem implies
\begin{align*}
|A_{n}^{f,u}(t)-A_{n}^{f,u}(t')|&\leq C(|t\wedge u-t'\wedge u|+|f(t\wedge u)-f(t'\wedge u)|).
\end{align*}
This implies two immediate consequences. First there exists some $\delta>0$ such that
\begin{align}
\label{add:lem3:proof:eq1}
\sup_{h \in \mathcal{F}_n(f_0)}|h(t)-h(t')|\leq C\varepsilon
\end{align}
whenever $|t-t'|<\delta$. Second, since $h(u)=0$ for all $h\in\mathcal{F}_0(f_0),$ 
\begin{align}
\label{add:lem3:proof:eq2}
\sup_{h \in \mathcal{F}_n(f_0)}\|h\|_\infty<\infty.
\end{align} 
From $(\ref{add:lem3:proof:eq1})$ and $(\ref{add:lem3:proof:eq2})$, applying Lemma $\ref{lem3}$ to $\mathcal{F}_n(f_0)$ yields 
\begin{align}
\label{add:lem3:proof:eq3}
\limsup_{n\rightarrow\infty}\sup_{\mathcal{F}_n(f_0)}\left|\int_0^1 hd(\mu_n-\mu)\right|\leq 2C\varepsilon.
\end{align}

\smallskip

Next using the tightness of the time changed Brownain motion $(M(u))_{0\leq u\leq 1}$, there exists a compact set $\mathcal{K}\subset C([0,1])$ such that $\p(\mathcal{K}^c)<\varepsilon.$ Let us cover $\mathcal{K}$ by a finite number of open balls $B(f_j,\varepsilon)$ for $j=1,2,\ldots,\ell.$ 
Write
\begin{align}
\begin{split}
\label{add:lem3:proof:eq6}
W_{\mu}(u)-W_{\mu_n}(x,u)&=\int_0^1A^{M,u}(t)d\mu(t)-\int_0^1A_{n}^{M,u}(t)d\mu_n(t)\\
&=\int_0^1\left(A^{M,u}(t)-A_n^{M,u}(t)\right)d\mu(t)+\int_0^1A_n^{M,u}(t)d(\mu-\mu_n)(t).
\end{split}
\end{align}
Recall that as we have mentioned before, $(\Phi_{n})_{n\geq 1}$ converges to $\Phi$ uniformly on $\mathbb{R}\times[0,1].$ It follows that 
\begin{align}
\label{add:lem3:proof:eq7}
\sup_{f\in C[0,1]}\|A^{f,u}-A_n^{f,u}\|_\infty<\varepsilon
\end{align}
for sufficiently large $n$. From $(\ref{add:lem3:proof:eq6})$ and $(\ref{add:lem3:proof:eq7})$, it follows that
\begin{align*}
&\sup_{[0,1]}\e\left(|W_\mu(u)-W_{\mu_n}(u)|^3:\mathcal{K}\right)\\
&\leq 2^3\varepsilon^3+2^3\sup_{[0,1]}\sum_{i=1}^\ell\e \left(\left|\int_0^1A_n^{M,u}d(\mu-\mu_n)\right|^3:B(f_i,\varepsilon)\cap \mathcal{K}\right)\\
&\leq 2^3\varepsilon^3+2^3\sum_{i=1}^\ell\sup_{h\in \mathcal{F}_n(f_i)}\left|\int_0^1 hd(\mu-\mu_n)\right|^3\p\left(B(f_i,\varepsilon)\cap \mathcal{K}\right)
\end{align*}
and then from $(\ref{add:lem3:proof:eq3})$,
\begin{align}
\begin{split}
\label{add:lem3:proof:eq8}
\limsup_{n\rightarrow\infty}\sup_{[0,1]}\e\left(|W_\mu(u)-W_{\mu_n}(u)|^3:\mathcal{K}\right)&\leq 2^3\varepsilon^3+2^6C^3\varepsilon^3\sum_{i=1}^\ell\p(B(f_i,\varepsilon)\cap \mathcal{K})\\
&= 2^3\varepsilon^3+2^6C^3\varepsilon^3\p(\mathcal{K})\\
&\leq (2^3+2^6C^2)\varepsilon^2.
\end{split}
\end{align}
On the other hand, using Lemma $\ref{lem1}$ again, we have
\begin{align*}
&\sup_{[0,1]}\e\left(|W_{\mu}(u)-W_{\mu_n}(u)|^3:\mathcal{K}^c\right)\leq C\p(\mathcal{K}^c)\leq C\varepsilon.
\end{align*}
This together with $(\ref{add:lem3:proof:eq8})$ completes our proof.
\end{proof}


\begin{lemma}\label{lem6} Let $(\mu_n)_{n\geq 1}\subset M_d[0,1]$ converge weakly to $\mu$. For $j\geq 1,$ let $$
Q(\mathbf{y})=P(y_1,\ldots,y_j)\exp y_0$$
for $\mathbf{y}=(y_0,y_1,\ldots,y_j)\in\mathbb{R}^{j+1}$, where $P$ is a polynomial. Suppose that $\partial_x^{j'}\Phi_{\mu}$ exists and $\lim_{n\rightarrow\infty}\partial_x^{j'}\Phi_{\mu_n}=\partial_x^{j'}\Phi_\mu$ uniformly on $\mathbb{R}\times[0,1]$ for $0\leq j'\leq j.$ Then
\begin{align}\label{lem6:eq1}
\lim_{n\rightarrow\infty}\sup_{\mathbb{R}\times[0,1]}\e |Q(V_{\mu_n},\partial_xV_{\mu_n},\ldots,\partial_x^jV_{\mu_n})- Q(V_\mu,\partial_xV_\mu,\ldots,\partial_x^jV_\mu)|=0
\end{align}
and
\begin{align}
\label{lem6:eq3}
\lim_{n\rightarrow\infty}\sup_{[0,1]}\e |Q(W_{\mu_n},W_{\mu_n}^1,\ldots,W^j_{\mu_n})- Q(W_\mu,W^1_\mu,\ldots,W^j_\mu)|=0,
\end{align}
where for any $\nu\in M[0,1]$, $W_\nu^{j'}(u):=\partial_x\Phi_{\nu}^{j'}(M(u),u)$ for $0\leq u\leq 1$ and $1\leq j'\leq j.$
\end{lemma}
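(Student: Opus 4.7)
The plan is to reduce both \eqref{lem6:eq1} and \eqref{lem6:eq3} to the convergence results of Lemma \ref{lem5} and the exponential moment bounds of Lemma \ref{app:lem1} by decomposing the ``polynomial times exponential'' structure of $Q$ and applying H\"older's inequality. For \eqref{lem6:eq1}, fix $(x,u)\in\mathbb{R}\times[0,1]$ and write
\begin{align*}
Q(V_{\mu_n},\partial_xV_{\mu_n},\ldots,\partial_x^jV_{\mu_n})-Q(V_\mu,\partial_xV_\mu,\ldots,\partial_x^jV_\mu)=A_n(x,u)+B_n(x,u),
\end{align*}
where $A_n=[P(\partial_xV_{\mu_n},\ldots,\partial_x^jV_{\mu_n})-P(\partial_xV_\mu,\ldots,\partial_x^jV_\mu)]\exp V_{\mu_n}$ and $B_n=P(\partial_xV_\mu,\ldots,\partial_x^jV_\mu)[\exp V_{\mu_n}-\exp V_\mu]$.

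For $A_n$, I would use the representation \eqref{lem1:eq1} combined with \eqref{lem1:eq2}, and the fact that $\partial_x^{j'}V_\mu$ is an integral of differences of bounded functions $\partial_x^{j'}\Phi_\mu$ against the probability measure $\mu$, to conclude that $\sup_n\sup_{\mathbb{R}\times[0,1]}|\partial_x^{j'}V_{\mu_n}|\le 2C_{0,j'}$ for $j'\ge 1$ (and similarly for $\mu$ after passing to the limit). Along the line segment joining the two argument vectors, the gradient of $P$ is therefore bounded by a constant $K_P$ depending only on $P$ and $\xi$, so by the multivariate mean value theorem
\begin{align*}
|A_n|\le K_P\sum_{j'=1}^{j}|\partial_x^{j'}V_{\mu_n}-\partial_x^{j'}V_\mu|\exp V_{\mu_n}.
\end{align*}
H\"older's inequality with exponents $3$ and $3/2$, followed by Lyapunov's inequality, yields
\begin{align*}
\mathbb{E}|A_n|\le K_P\sum_{j'=1}^{j}\left(\mathbb{E}|\partial_x^{j'}V_{\mu_n}-\partial_x^{j'}V_\mu|^3\right)^{1/3}\left(\mathbb{E}\exp 3V_{\mu_n}\right)^{1/3},
\end{align*}
which tends to $0$ uniformly in $(x,u)$ by \eqref{lem5:eq1} of Lemma \ref{lem5} together with the uniform bound \eqref{app:lem1:eq1} of Lemma \ref{app:lem1}.

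For $B_n$, I would use the elementary estimate $|e^a-e^b|\le|a-b|(e^a+e^b)$ and the fact that $|P(\partial_xV_\mu,\ldots,\partial_x^jV_\mu)|\le K_P'$ for some constant $K_P'$ (from the same uniform bound on the higher $x$-derivatives of $V_\mu$), so that
\begin{align*}
\mathbb{E}|B_n|\le K_P'\left(\mathbb{E}|V_{\mu_n}-V_\mu|^3\right)^{1/3}\left(\mathbb{E}(e^{V_{\mu_n}}+e^{V_\mu})^{3/2}\right)^{2/3},
\end{align*}
and the right side tends to $0$ uniformly in $(x,u)$ by \eqref{lem5:eq1} with $j'=0$ and \eqref{app:lem1:eq1}. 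This proves \eqref{lem6:eq1}. For \eqref{lem6:eq3}, I would repeat the decomposition verbatim with $V$ replaced by $W$ and $\partial_x^{j'}V$ replaced by $W^{j'}$. The $L^3$ convergence of $W^{j'}_{\mu_n}$ for $j'\ge 1$ follows immediately from the hypothesis that $\partial_x^{j'}\Phi_{\mu_n}\to\partial_x^{j'}\Phi_\mu$ uniformly on $\mathbb{R}\times[0,1]$, since $|W^{j'}_{\mu_n}(u)-W^{j'}_\mu(u)|\le\|\partial_x^{j'}\Phi_{\mu_n}-\partial_x^{j'}\Phi_\mu\|_\infty$; the case $j'=0$ is \eqref{lem5:eq2}; and the exponential moment bound is \eqref{app:lem1:eq3}.

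The main obstacle is essentially bookkeeping: one must verify that the polynomial factor $P(\partial_xV_{\mu_n},\ldots,\partial_x^jV_{\mu_n})$ is deterministically bounded uniformly in $n$ and in $(x,u)$, so that after applying H\"older only an $L^{3/2}$-moment of the exponential enters and one does not need to control cross-moments. This uniformity is the content of \eqref{lem1:eq2}, promoted to general $\mu\in M[0,1]$ by the hypothesis of uniform convergence of $\partial_x^{j'}\Phi_{\mu_n}\to\partial_x^{j'}\Phi_\mu$.
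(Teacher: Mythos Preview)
Your proposal is correct and follows essentially the same route as the paper's proof: both reduce the estimate to the $L^3$ convergence of Lemma~\ref{lem5}, the exponential moment bounds of Lemma~\ref{app:lem1}, and the uniform boundedness of the higher $x$-derivatives from \eqref{lem1:eq2}, combined via H\"older's inequality. The only cosmetic difference is that the paper applies the mean value theorem to $Q(\mathbf{y})$ as a single function of $(y_0,\ldots,y_j)$, obtaining the bound $|Q(\mathbf{y})-Q(\mathbf{y}')|\le C\exp(|y_0|+|y_0'|)\sum_{i=0}^j|y_i-y_i'|$ in one step, whereas you split off the exponential first and handle the polynomial and exponential increments separately as $A_n+B_n$; the ingredients invoked are identical.
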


\begin{proof} Once again we will only provide the detailed proof for \eqref{lem6:eq3}. As for \eqref{lem6:eq1}, it can be treated by a similar argument as \eqref{lem6:eq1}. Let $C_{0,1},\ldots,C_{0,j}$ be the bounds we obtained from \eqref{lem1:eq2}. 
Using the mean value theorem, there exists some constant $C$ depending only on $C_{0,1},\ldots,C_{0,j}$ such that
\begin{align*}
|Q(\mathbf{y})-Q(\mathbf{y}')|&=|\triangledown Q(t\mathbf{y}+(1-t)\mathbf{y}')\cdot (\mathbf{y}-\mathbf{y}')|\\
&\leq \|\triangledown Q(t\mathbf{y}+(1-t)\mathbf{y}')\|\|\mathbf{y}-\mathbf{y}'\|\\
&\leq C\exp (|ty_0+(1-t)y_0'|)\|\mathbf{y}-\mathbf{y}'\| \\
&\leq C\exp( |y_0|+|y_0'|)\sum_{i=0}^{j}|y_i-y_i'|
\end{align*}
whenever $\mathbf{y},\mathbf{y}'$ satisfy $|y_i|,|y_i'|\leq C_{0,i}$ for $1\leq i\leq j.$ 
From this and H\"{o}lder's inequality, it suffices to prove that for some $C>0$,
\begin{align}\label{add:lem6:proof:eq1}
\max\{ \ \sup_{[0,1]}\e\exp 3|W_{\mu_n}|,\,\,\sup_{[0,1]}\e\exp 3|W_\mu|\ \}&\leq C
\end{align}
and
\begin{align}
\begin{split}\label{add:lem6:proof:eq2}
\lim_{n\rightarrow\infty}\sup_{[0,1]}\e |W_{\mu_n}-W_\mu|^3&=0,
\end{split}\\
\begin{split}\label{add:lem6:proof:eq3}
\lim_{n\rightarrow\infty}\sup_{[0,1]}\e |W_{\mu_n}^i-W^i_\mu|^3&=0,\,\,1\leq i\leq j.
\end{split}
\end{align}
Here \eqref{add:lem6:proof:eq1} and \eqref{add:lem6:proof:eq2} follow respectively from \eqref{app:lem1:eq3} and \eqref{lem5:eq2}, while \eqref{add:lem6:proof:eq3} holds directly from the uniform convergence of $(\partial_x^i\Phi_{\mu_n})_{n\geq 1}$. This completes our proof.
\end{proof}

\begin{proof}[Proof of Proposition \ref{thm1}] Note that since $M_d[0,1]$ is a dense subset of $M[0,1]$, it suffices to consider $(\mu_n)_{n\geq 1}\subset M_d[0,1]$ with a weak limit $\mu.$ First, let us prove $(i)$ by induction. The base case follows by the Lipschitz property of the functional $\Phi_{\cdot}$ as discussed in Section \ref{section:pde}. Suppose that there exists some $j\geq 0$ such that the announced result holds for each $0\leq j'\leq j.$ Recall $F_{j+1}$ from $(\ref{eq1})$. Observe that it can be written as
\begin{align}\label{thm1:proof:eq1}
F_{j+1}(y_1,\ldots,y_j,w)&=\sum_{i=0}^\ell P_i(y_1,\ldots,y_j)w^i,
\end{align}
where $P_0,\ldots,P_\ell$ are polynomials and $\ell\geq 1.$ Set $Q_{i}(y_0,y_1,\ldots,y_j)=P_i(y_1,\ldots,y_j)\exp y_0.$ Since  $\partial_x^{j'}\Phi_{\mu_n}$ is continuous from \eqref{app:lem1:eq2} and $\lim_{n\rightarrow\infty}\partial_x^{j'}\Phi_{\mu_n}=\partial_x^{j'}\Phi_\mu$ uniformly on $\mathbb{R}\times[0,1]$ for all $0\leq j'\leq j,$ applying $(\ref{lem6:eq1})$ and $|\tanh|\leq 1$ we obtain
\begin{align*}
&\limsup_{n\rightarrow\infty}\sup_{\mathbb{R}\times[0,1]}\e |Q_i(V_{\mu_n}(x,u),\partial_xV_{\mu_n}(x,u),\ldots,\partial_{x}^jV_{\mu_n}(x,u))\\
&\qquad\qquad-Q_i(V_\mu(x,u),\partial_xV_\mu(x,u),\ldots,\partial_{x}^jV_\mu(x,u))||\tanh^i(x+M(1)-M(u))|=0.
\end{align*} 
From  \eqref{thm1:proof:eq1}, this and \eqref{lem1:eq1} imply that $\{\partial_x^{j+1}\Phi_{\mu_n}\}_{n\geq 1}$ converges uniformly on $\mathbb{R}\times[0,1]$. So $\partial_x^{j+1}\Phi_{\mu}=\lim_{n\rightarrow\infty}\partial_x^{j+1}\Phi_{\mu_n}$ exists and is continuous. This gives $(i).$

Next let us turn to the proofs of $(ii)$ and $(iii).$ Define $Q(\mathbf{y})=P(y_1,\ldots,y_j)\exp y_0$ for $\mathbf{y}=(y_0,y_1,\ldots,y_j).$ From part $(i)$, we know that $(\partial_x^{j'}\Phi_{\mu_n})_{n\geq 1}$ converges uniformly to $\partial_x^{j'}\Phi_\mu$ on $\mathbb{R}\times[0,1]$ for all $0\leq j'\leq j.$ Consequently, $(ii)$ follows from \eqref{lem6:eq3}. As for $(iii)$, it can be simply concluded from $(i)$ and the definition of the Parisi PDE \eqref{eq:ParPDE}.
\end{proof}

\end{document}